\newcommand{\R}{\mathbb{R}}
\newcommand{\Q}{\mathbb{Q}}
\newcommand{\Z}{\mathbb{Z}}
\newcommand{\id}[1]{{\rm Id}_{#1}} 
\newcommand{\Top}{\sf Top}
\newcommand{\TD}{\sf TD}
\newcommand{\MV}{{\sf MV}}
\newcommand{\BA}{{\sf BA}}
\newcommand{\KHaus}{{\sf KHaus}}
\DeclareMathOperator{\Spec}{{\rm Spec}}
\DeclareMathOperator{\Cp}{{\rm Cp}}
\newcommand{\LF}{\mathsf{LF}}
\newcommand{\LSimp}{\mathsf{LS}}
\newcommand{\Speck}{\mathsf{Sp}}
\DeclareMathOperator{\PP}{{\rm P}}
\DeclareMathOperator{\RR}{{\rm R}}
\DeclareMathOperator{\I}{{\rm I}}
\DeclareMathOperator{\Max}{{\rm Max}}
\DeclareMathOperator{\V}{\mathbb{V}}
\DeclareMathOperator{\Su}{\mathbb{S}}
\DeclareMathOperator{\Rad}{Rad}
\DeclareMathOperator{\C}{C}
\newcommand{\h}{\mathfrak{h}}
\DeclareMathOperator{\Sa}{S} 
\renewcommand{\leq}{\leqslant}
\renewcommand{\geq}{\geqslant}
\newcommand{\seq}{\subseteq}
\newcommand{\twopl}[2]{\langle #1, #2\rangle}
\newcommand{\p}{\mathfrak{p}}
\newcommand{\m}{\mathfrak{m}}
\renewcommand{\2}{\mathbbm{2}}
\newtheorem{theorem}{Theorem}[section]
\newtheorem{lemma}[theorem]{Lemma}
\newtheorem{corollary}[theorem]{Corollary}
\newtheorem{prop}[theorem]{Proposition}
\theoremstyle{definition}
\newtheorem{definition}[theorem]{Definition}
\newtheorem{remark}[theorem]{Remark}
\newcommand{\Set}{\mathsf{Set}}
\newcommand{\FinSets}{\ensuremath{\mathsf{fSet}}}
\newcommand{\Stone}{\mathsf{Stone}}
\newcommand{\Ring}{\mathsf{Ring}}
\newcommand{\lA}{\ell\mathsf{A}}
\newcommand{\opCat}[1]{\ensuremath{#1^\mathrm{op}}}
\newcommand{\calA}{\ensuremath{\mathcal A}}
\newcommand{\calC}{\ensuremath{\mathcal C}}
\newcommand{\calE}{\mathcal{E}}
\newcommand{\calS}{\ensuremath{\mathcal S}}
\newcommand{\pr}{\mathrm{pr}}  
\DeclareMathOperator{\Dec}{Dec}
\theoremstyle{remark}
\title{Separable MV-algebras and lattice-groups}
\author[V. Marra]{Vincenzo Marra}
\author[M. Menni]{Mat\'ias Menni}
\address[V. Marra]{Dipartimento di Matematica {\sl Federigo Enriques}, Universit\`a degli Studi di Milano, via Cesare Saldini 50, 20133 Milano, Italy.}
\address[M. Menni]{Conicet and Departamento de Matem\'atica, Universidad Nacional de La Plata, Calles 50 y 115, 1900 - La Plata, Argentina.}
\email[V. Marra]{vincenzo.marra@unimi.it}
\email[M. Menni]{matias.menni@gmail.com}
\thanks{2020 {\it Mathematics Subject Classification.
}
Primary: 06D35. Secondary: 06F20, 18B50, 12F10}
\keywords{MV-algebra, lattice-ordered Abelian group, strong order unit, separable algebra, extensive category, decidable object}
\begin{document}

\begin{abstract} General theory determines the notion of separable MV-algebra (equivalently, of separable unital lattice-ordered Abelian group). We establish the following structure theorem: An MV-algebra is separable if, and only if, it is a finite product of algebras of rational numbers---i.e., of subalgebras of the MV-algebra $[0,1]\cap\Q$. Beyond its intrinsic algebraic interest, this research is motivated by the long-term programme of developing the algebraic geometry of the opposite of the category of MV-algebras, in analogy with the classical case of commutative $K$-algebras over a field $K$.
\end{abstract}

\maketitle


\section{Introduction}\label{s:introduction}

For any field $K$, a (commutative) $K$-algebra is separable if, and only if, it is a finite product of finite separable field extensions of $K$. See, for example,  \cite[Corollary~{4.5.8}]{Ford2017}. The aim of the present paper is to establish the analogue of this fact for MV-algebras and lattice-groups. We show as our main result that an MV-algebra is separable exactly when it is a finite product of algebras of rational numbers---the subalgebras of $[0,1]\cap\Q$ (Theorem \ref{ThmMain}). By a well-known theorem of Mundici \cite{Mundici86}, the category of MV-algebras is equivalent to the category of lattice-ordered Abelian groups with a unit. We  frame our treatment in the language of MV-algebras, and postpone to the final Appendix \ref{a:l-groups} a synopsis of its  translation to lattice-groups.

While the main result of this paper holds independent algebraic interest, it finds  its deeper motivation in a broader mathematical landscape on which we offer some  comments in this introduction. 

As explained in \cite{Lawvere08}, some of Grothendieck’s algebro-geometric constructions may be abstracted to the context of extensive categories \cite{Lawvere91,CLW}. 
A category $\calC$ with finite coproducts is {\em extensive} if   the canonical functor 
\[{\calC/X \times \calC/Y \rightarrow \calC/(X + Y)}\]
is an equivalence  for every pair of objects $X$, $Y$ in $\calC$.  
Extensivity attempts to make explicit a most basic property of (finite) coproducts in categories `of spaces'. For instance, the category $\Top$ of topological spaces and continuous functions between them is extensive; the category of groups is not.

Extensive experience indeed confirms that conceiving an extensive category as a category `of spaces'  is a useful conceptual guide.  Essential to the development of Algebraic Geometry is the fact that $\opCat{\Ring}$, the opposite of the category of (commutative unital) rings, is extensive. 
(It easily follows that, for any ring $R$, the opposite of the category ${R/\Ring}$ of $R$-algebras is extensive.)  
Extensivity naturally determines a notion of {\em complemented} subobject. 
So, in an extensive category with finite products, it is also natural to consider the objects with complemented diagonal. These  are traditionally called {\em decidable objects}, and it is useful to think of them as the `discrete spaces' inside the category `of spaces' where they live. For instance, a topological space is decidable if, and only if, it is discrete. For any ring $R$, and any $R$-algebra $A$, let ${\Spec A}$ be the corresponding object in the extensive category ${\opCat{(R/\Ring)}}$.  Then ${\Spec A}$ is decidable if, and only if, $A$ is separable as an $R$-algebra. In other words, the separable $R$-algebras are precisely those for which the associated affine scheme is  decidable. 

Let us say that a category is {\em coextensive} if its opposite is extensive. In light of the above comments, an object in a coextensive category $\calA$ is called {\em separable} if the corresponding object in $\opCat{\calA}$ is decidable. 

The category $\MV$ of MV-algebras is coextensive.
This  provides the notion of separable MV-algebra that is the topic of the present paper. Explicitly, the MV-algebra $A$ is separable if, and only if, there is a homomorphism ${f \colon A + A \rightarrow A}$ such that the span
\[
\xymatrix{A & \ar[l]_-{\nabla} A + A \ar[r]^-{f} & A}
\]
is a product diagram, where $\nabla \colon A+A\to A$ denotes the codiagonal map.

The  geometry of $\opCat{\MV}$ has long been the subject of intensive hands-on study because of its striking connections with several areas of classical mathematics, from piecewise-linear topology to the geometry of numbers. 
The characterisation of decidable objects in $\opCat{\MV}$  that we present here was motivated by our ongoing long-term project to study of the `gros Zariski' topos determined by the theory of MV-algebras as the domain of a pre-cohesive geometric morphism \cite{Lawvere07}. We postpone the topos-theoretic consequences of separability to further publications; no Topos Theory is  required for the proof of the purely algebraic results in the present paper.

The plan of the paper is as follows. In Sections \ref{s:ext}, \ref{s:fcp-functors}, and \ref{s:dec} we introduce the necessary material to prove a sufficient condition for an extensive category with finite products to have the property that every decidable object is a finite coproduct of connected subterminals.
 In Section~\ref{s:MVcoext} we verify that $\MV$ is coextensive.
 In Theorem~\ref{t:superseparableNew} we characterise the  subterminal objects of $\opCat{\MV}$ as, in $\MV$,    the subalgebras of $[0,1]\cap\Q$.
In order to extend  Theorem \ref{t:superseparableNew} to a characterisation of separable MV-algebras we need to introduce the Pierce functor for $\MV$, an analogue of the standard ring-theoretic functor by the same name.  
The key fact is that the Pierce functor  preserves coproducts. To prove it, in Section \ref{s:stonereflection} we develop the required material on the connected-component functor $\pi_0$ in $\Top$.  Using  the theory of spectra of MV-algebras recalled in Section \ref{s:Spectra} along with the topological $\pi_0$ functor, we are able to show in  Theorem~\ref{t:Pierce} that the Pierce functor does preserve all coproducts. Theorems \ref{t:superseparableNew} and \ref{t:Pierce}  are combined in Section \ref{s:main} to obtain our main result, the mentioned characterisation of separable MV-algebras. We conclude Section \ref{s:main} with a discussion that points to further research aimed at enriching the connected-component functor on $\opCat{\MV}$ to an `arithmetic connected-component functor'; this functor, we submit, arises out of locally finite MV-algebras. Finally, in Appendix~\ref{a:l-groups} we collect the translation of our main results to lattice-groups.

\section{Extensive categories and connected objects}\label{s:ext}
In this section we recall the definition of extensive category and of connected object.
 For more details about extensive categories see, for example, \cite{Lawvere91,CLW} and references therein.

A category $\calC$ with finite coproducts  is called {\em extensive} if for every $X$ and $Y$ in $\calC$ the canonical functor ${\calC/X \times \calC/Y \rightarrow  \calC/(X + Y)}$
is an equivalence.
Examples of extensive categories are $\Set$ (sets and functions), $\FinSets$ (finite sets and functions), any topos,  $\Top$, $\KHaus$ (compact Hausdorff spaces and continuous maps), $\Stone$ (Stone\footnote{By a {\em Stone space} we mean a compact Hausdorff zero-dimensional space. Such spaces are often called {\em Boolean} in the literature.} spaces and continuous maps). The categories of rings, of Boolean algebras and of distributive lattices\footnote{Throughout the paper, with the exception of Appendix \ref{a:l-groups}, we assume distributive lattices to have top and bottom elements preserved by homomorphisms.} are coextensive.
See \cite{Lawvere08} and \cite{CMZ} for further examples.

In extensive categories coproduct injections are regular monomorphisms, 
coproducts of monomorphisms are monomorphisms, and
the initial object  is {\em strict} in the sense that  any map ${X \rightarrow 0}$ is an isomorphism. Also, extensive categories are closed under slicing.

\begin{definition}\label{DefDisjointCopros}
A coproduct ${in_0 \colon X \rightarrow X + Y \leftarrow  Y :{}in_1}$ is
\begin{enumerate}
\item  {\em disjoint} if the coproduct injections are monic and the commutative square 
$$\xymatrix{
0 \ar[d] \ar[r] & Y \ar[d]^-{in_1} \\
X \ar[r]_-{in_0} & X + Y
}$$
is a pullback; 
\item {\em universal} if for every arrow $Z \rightarrow X + Y$  the two pullback squares below exist
\[\xymatrix{
V \ar[d] \ar[r]& Z \ar[d] & \ar[l]W\ar[d] \\
X \ar[r]_-{in_0} & X + Y & \ar[l]^-{in_1} Y
}\]
and the top cospan is a coproduct diagram.
\end{enumerate}
\end{definition}

The following result is essentially    \cite[Proposition~{2.14}]{CLW}.

\begin{prop}\label{PropCharExtensivity}   A category with finite coproducts is extensive if, and only if, 
 coproducts are universal and disjoint.
\end{prop}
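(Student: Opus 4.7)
The plan is to analyse the canonical functor $F \colon \calC/X \times \calC/Y \to \calC/(X+Y)$, which sends $(f \colon A \to X, g \colon B \to Y)$ to $f + g \colon A + B \to X+Y$, together with the candidate inverse $G$ that pulls an object $h \colon Z \to X+Y$ back along each coproduct injection $in_0, in_1$. The two implications reduce to showing that $F$ is an equivalence if and only if $G$ is a quasi-inverse for it.

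For the backward direction, assume coproducts are universal and disjoint; universality guarantees that the pullbacks defining $G$ exist. For $F \circ G \cong \id{}$, take $h \colon Z \to X+Y$ with pullbacks $V \to X$ and $W \to Y$ along $in_0, in_1$; universality applied to $h$ yields $Z \cong V + W$ and identifies $h$ with the coproduct of the two composites $V \to X \to X+Y$ and $W \to Y \to X+Y$, which is precisely $F(V \to X, W \to Y)$. For $G \circ F \cong \id{}$, starting from $(f, g)$ one forms $f+g \colon A+B \to X+Y$ and must verify that the square with sides $j_0 \colon A \to A+B$, $f \colon A \to X$, $f+g$, and $in_0 \colon X \to X+Y$ is a pullback. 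Given a test cone, I would decompose the map into $A+B$ via universality along $j_0, j_1$; the contribution from the $B$-summand supplies a cone to the pullback of $in_0$ and $in_1$, which by disjointness is the initial object, and strictness of $0$ then forces that summand to vanish. The required factorisation through $A$ follows, with uniqueness from the monicity of $j_0$.

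For the forward direction, assume $F$ is an equivalence, so it preserves and reflects all limits. The key identifications are $in_0 \cong F(\id{X}, 0 \to Y)$ and $in_1 \cong F(0 \to X, \id{Y})$. Products in $\calC/X \times \calC/Y$ are coordinatewise, so the product of $(\id{X}, 0 \to Y)$ with $(0 \to X, \id{Y})$ is $(0 \to X, 0 \to Y)$, whose image under $F$ is $0 \to X+Y$; since products in $\calC/(X+Y)$ are pullbacks in $\calC$, the pullback of $in_0$ and $in_1$ is $0$. Taking the product of $(\id{X}, 0 \to Y)$ with itself produces the same object, so the kernel pair of $in_0$ is $\id{X}$, forcing $in_0$ to be monic; symmetrically for $in_1$. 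This establishes disjointness. For universality, given $Z \to X+Y$, essential surjectivity of $F$ produces $(V \to X, W \to Y)$ with $V+W \cong Z$ over $X+Y$, and the limit-preservation argument above identifies $V, W$ with the pullbacks of $in_0, in_1$ along $Z \to X+Y$.

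The main obstacle lies in the backward direction, in verifying that the square defining $G \circ F \cong \id{}$ is a genuine pullback: this is where disjointness enters nontrivially, via the strictness of the initial object, to kill the cross-terms $in_0 \alpha = in_1 \beta$ that would otherwise leak in through the $B$-summand of a test cone. Once this is established, the rest reduces to routine pasting of pullbacks and bookkeeping against the limit-preservation properties of the equivalence $F$.
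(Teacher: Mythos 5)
The paper offers no proof of this proposition: it is quoted as essentially \cite[Proposition~2.14]{CLW}, so your reconstruction is the only argument on the table. Your backward direction is the standard one and is essentially sound: universality supplies the pullbacks defining $G$ and gives essential surjectivity of $F$, while disjointness kills the cross-term in the square for $G\circ F$. Two small points there: the strictness of $0$ that you invoke must itself be derived (it follows from universality applied to a map $Z\to 0=0+0$, whose two pullbacks are both $\mathrm{id}_Z$, forcing any two parallel maps out of $Z$ to coincide and hence $Z\cong 0$); and your ``$F\circ G\cong\mathrm{id}$, $G\circ F\cong\mathrm{id}$'' phrasing quietly needs full faithfulness of $F$, which does follow from the very pullback square you establish, but deserves a sentence.

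The genuine gap is in the forward direction, exactly where the initial object enters. The disjointness square is fine, since it only uses products with the \emph{terminal} objects $\mathrm{id}_X$, $\mathrm{id}_Y$ of the slices. But your monicity step asserts that the product of $(\mathrm{id}_X,\,0\to Y)$ with itself ``produces the same object'', which requires $(0\to Y)\times(0\to Y)\cong(0\to Y)$ in $\calC/Y$, and your universality step likewise requires $(W\to Y)\times(0\to Y)\cong(0\to Y)$. Neither is automatic: the initial object of a slice need not be subterminal, so these products need not be initial---indeed need not exist---unless one first shows that any object admitting a map to $0$ admits exactly one, i.e.\ that $0$ is strict. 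That is itself a nontrivial consequence of extensivity (it is why \cite{CLW} isolate strictness of the initial object as a separate preliminary result), and as written your argument silently assumes it; in particular you cannot yet assert that the kernel pair of $in_0$ exists. A clean repair bypasses products in favour of full faithfulness on hom-sets: for $u,v\colon T\to X$ with $in_0u=in_0v=t$ one has $(T,t)\cong F(u,\,0\to Y)$, and $\hom\bigl(F(u,\,0\to Y),F(\mathrm{id}_X,\,0\to Y)\bigr)\cong\hom_{\calC/X}(u,\mathrm{id}_X)\times\hom_{\calC/Y}(0\to Y,0\to Y)$ is a singleton, whence $u=v$; the identification of $V$ and $W$ with the pullbacks in the universality step goes through by the same device, because $\hom_{\calC/Y}(0\to Y,\,W\to Y)$ is a singleton by initiality of $0$. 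With that adjustment the forward direction closes.
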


Assume from now on that $\calC$ is an extensive category.

A monomorphism ${u \colon U \rightarrow X}$ in $\calC$ is called {\em complemented} if there is a ${v \colon V \rightarrow X}$ such that the cospan
${u \colon U \rightarrow X \leftarrow V :{}v}$ is a coproduct diagram. In this case,  $v$ is the {\em complement} of $u$. Notice that complemented monomorphisms are regular monomorphisms  because they are coproduct injections.  
In the next definition, and throughout,  we  identify monomorphisms and subobjects whenever convenient.

\begin{definition}\label{DefConnected}
An object $X$ in $\calC$ is {\em connected} if it has exactly two complemented subobjects. 
\end{definition}

 In $\KHaus$ or $\Top$, an object is connected if and only if it has exactly two clopens. 
An object  $A$ in  ${\Ring}$ is connected as an object in $\opCat{\Ring}$ if and only if $A$ has exactly two idempotents.
We remark that, in general, connected objects are not closed under finite products.

\newcommand{\rmB}{\mathrm{B}}

For each $X$ in $\calC$ we let ${\rmB X}$ denote the poset of complemented subobjects of $X$.
We stress that  if ${u \colon U \rightarrow X}$ and ${v \colon V \rightarrow X}$ are two complemented monomorphisms  in $\calC$ and ${f \colon U \rightarrow V}$ is such that ${v f = u}$ then $f$ is complemented \cite[Lemma~{3.2}]{Gates98a}. So for any two complemented subobjects ${u, v}$ of $X$, there is no ambiguity  in writing ${u \leq v}$ since it means the same for $u$, $v$ considered as subobjects, or as complemented subobjects.

Extensivity easily  implies that the poset ${\rmB X}$ has finite infima, a bottom element, and an involution.
This structure  may be used to prove that ${\rmB X}$ is actually a Boolean algebra which interacts well with pullbacks in the sense that, for any map ${f \colon X \rightarrow Y}$ in $\calC$, pulling back along $f$ determines a Boolean algebra homomorphism ${\rmB Y \rightarrow \rmB X}$.
So, assuming that $\calC$ is well-powered, the assignment ${X \mapsto \rmB X}$ extends to a functor ${\calC \rightarrow \opCat{\BA}}$ between extensive categories that preserves finite coproducts.

We will use the following simple equivalences.

\begin{lemma}\label{LemDefIndec}  For any  object $X$ in $\calC$ the following are equivalent.
\begin{enumerate}
\item $X$ is connected.
\item $X$ is not initial and, for every complemented subobject ${u \colon U \rightarrow X}$, $U$ is initial or $u$ is an isomorphism.
\item $X$ is not initial and, for every coproduct diagram ${U \rightarrow X \leftarrow V}$, $U$ is initial or $V$ is initial.
\end{enumerate}
\end{lemma}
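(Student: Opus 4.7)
The plan is to pivot the proof on the observation that, in an extensive category $\calC$, every object $X$ carries two canonical complemented subobjects: the unique map $0 \to X$ (whose complement is $\id{X}$) and the identity $\id{X}$ (whose complement is $0 \to X$), corresponding to the coproduct decomposition $X \cong 0 + X$. By the strict initiality that holds in any extensive category, these two subobjects coincide if, and only if, $X$ is itself initial.

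First I would establish (1)$\Leftrightarrow$(2). If $X$ is connected, the two canonical complemented subobjects just described must be exactly the two elements of $\rmB X$; in particular they are distinct, so $X$ is not initial, and any complemented $u\colon U \to X$ coincides as a subobject with one of the two, meaning either $U$ is initial or $u$ is an isomorphism. The converse is immediate: (2) asserts that every complemented subobject equals one of the two canonical ones, and non-initiality of $X$ makes these two distinct, so $\rmB X$ has exactly two elements.

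Next, (2)$\Leftrightarrow$(3) follows once one identifies complemented monos with coproduct injections: a complemented mono $u\colon U \to X$ is by definition accompanied by a complement $v\colon V \to X$ making $U \to X \leftarrow V$ a coproduct, and conversely every coproduct on $X$ yields a pair of mutually complementary complemented monos. The equivalence therefore reduces to the claim that $u$ is an isomorphism precisely when $V$ is initial. The direction ``$V \cong 0 \Rightarrow u$ iso'' is routine, since the coproduct with the initial object is canonically isomorphic to the remaining summand via the corresponding coproduct injection. For the converse I would invoke disjointness of coproducts (Definition~\ref{DefDisjointCopros}(1), valid by Proposition~\ref{PropCharExtensivity}): the pullback of $u$ along $v$ must be $0$, but whenever $u$ is iso this pullback is canonically $V$ itself, forcing $V \cong 0$.

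The only step where the full strength of extensivity — beyond the mere existence of finite coproducts — is actually used is the appeal to disjointness in the last implication; everything else reduces to elementary manipulation of strict initiality and the universal property of coproducts, so I do not anticipate any substantive obstacle.
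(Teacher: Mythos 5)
Your argument is correct, and there is nothing in the paper to diverge from: Lemma~\ref{LemDefIndec} is stated there without proof, as a ``simple equivalence''. Your route --- pivoting on the two canonical complemented subobjects $0 \to X$ and $\id{X}$, and reducing (2)$\Leftrightarrow$(3) to the claim that a coproduct injection $u$ is an isomorphism precisely when its cosummand $V$ is initial --- is the natural one, and your appeal to disjointness of coproducts for the implication ``$u$ iso $\Rightarrow V$ initial'' is indeed the only place where extensivity is genuinely needed.

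One small point of logic in (1)$\Rightarrow$(2): as written you deduce that the two canonical subobjects are distinct from the assertion that they ``must be exactly the two elements of $\rmB X$'', but that assertion already presupposes their distinctness. The clean order is: if $X$ were initial, strictness of $0$ would force every complemented subobject of $X$ to be $0 \to X$ itself, so $\rmB X$ would be a singleton, contradicting the hypothesis that it has exactly two elements; hence $X$ is not initial, the two canonical subobjects are therefore distinct, and since $\rmB X$ has exactly two elements they exhaust it. This is a one-line repair, not a substantive gap.
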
 

\section{Finite-coproduct preserving  functors}\label{s:fcp-functors}
\label{SecAux}

Let $\calC$ and $\calS$ be extensive categories, and let ${L \colon \calC \rightarrow \calS}$ preserve finite coproducts. Such a functor preserves complemented monomorphisms so, for any $X$ in $\calC$,  $L$ induces a function ${\rmB X \rightarrow \rmB(L X)}$ which is actually a map in ${\BA}$, natural in $X$. (It is relevant to remark such a functor also preserves pullbacks along coproduct injections. See \cite[3.8]{Gates98a}.)

We will say that $L$ is {\em injective \textup{(}surjective/bijective\textup{)} on complemented subobjects} if and only if ${\rmB X \rightarrow \rmB(L X)}$ has the corresponding property for every $X$ in $\calC$.

\begin{lemma}\label{LemCharReflectionOfZero} The functor ${L \colon \calC \rightarrow \calS}$  is injective on complemented subobjects  if and only if it reflects $0$. In this case, $L$ also  reflects connected objects.
\end{lemma}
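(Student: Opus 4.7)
The plan is to unpack both conditions in terms of kernels of Boolean algebra maps and then use strictness of the initial object to see that they say the same thing, after which the reflection of connectedness is a direct verification against the characterisation in Lemma~\ref{LemDefIndec}.

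First I would observe that, since $\rmB X \to \rmB(LX)$ is a homomorphism of Boolean algebras, it is injective if and only if it has trivial kernel, i.e.\ any complemented subobject $u \colon U \rightarrow X$ whose image $Lu$ is the bottom of $\rmB(LX)$ must already be the bottom of $\rmB X$. By strictness of the initial object, the bottom of $\rmB X$ is exactly the (equivalence class of the) mono $0 \rightarrow X$, and similarly for $\rmB(LX)$; hence, ``$u$ is bottom in $\rmB X$'' is the same as $U \cong 0$ in $\calC$, and ``$Lu$ is bottom in $\rmB(LX)$'' is the same as $LU \cong 0$ in $\calS$. So injectivity on complemented subobjects unpacks to the implication $LU \cong 0 \Rightarrow U \cong 0$ for every complemented subobject $U \rightarrow X$.

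With this dictionary in hand, the equivalence is formal. The implication ``injective on complemented subobjects $\Rightarrow$ reflects $0$'' is immediate by taking the complemented subobject $\id{X} \colon X \rightarrow X$ (whose complement is $0 \rightarrow X$): if $LX \cong 0$ then $L\id{X}$ is bottom in $\rmB(LX)$, so by injectivity $\id{X}$ is bottom in $\rmB X$, giving $X \cong 0$. Conversely, if $L$ reflects $0$, any complemented subobject $U \rightarrow X$ with $LU \cong 0$ has $U \cong 0$, so the induced Boolean homomorphism has trivial kernel.

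For the ``in this case'' clause, I would assume $L$ reflects $0$ and show that $LX$ connected forces $X$ connected, using the third equivalent form in Lemma~\ref{LemDefIndec}. Since $L$ preserves finite coproducts, $X \cong 0$ would give $LX \cong 0$, contradicting that $LX$ is not initial; so $X$ is not initial. Now let ${U \rightarrow X \leftarrow V}$ be a coproduct diagram. Applying $L$ yields a coproduct diagram ${LU \rightarrow LX \leftarrow LV}$, and connectedness of $LX$ forces $LU \cong 0$ or $LV \cong 0$; reflection of $0$ then gives $U \cong 0$ or $V \cong 0$, whence $X$ is connected.

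There is no real obstacle: the argument is purely a translation between Boolean-algebraic injectivity, strictness of the initial object in an extensive category, and the characterisation of connectedness already recorded in Lemma~\ref{LemDefIndec}. The only mild subtlety worth flagging is the use of strictness (to identify the bottom of $\rmB X$ with the class of monos from $0$), and of the fact that $L$, preserving finite coproducts, preserves the empty coproduct $0$ as well.
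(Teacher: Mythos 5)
Your proof is correct and follows essentially the same route as the paper's: both directions are handled by translating injectivity of $\rmB X \to \rmB(LX)$ into triviality of its kernel (the paper phrases the forward direction as ``$\rmB(LX)$ terminal forces $\rmB X$ trivial'', which is your $\id{X}$ argument in different words), and the reflection of connectedness is verified in exactly the same way via Lemma~\ref{LemDefIndec}. No gaps.
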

\begin{proof}
Assume first that $L$  is injective on complemented subobjects and let $X$ in $\calC$ be such that ${L X = 0}$.
Then ${\rmB(L X)}$ is the terminal Boolean algebra and, as ${\rmB X \rightarrow \rmB(L X)}$ is injective by hypothesis, ${\rmB X}$ is also trivial.
For the converse notice that if  $L$ reflects $0$ then the map ${\rmB X \rightarrow \rmB(L X)}$ in $\BA$ has trivial kernel for every $X$ in $\calC$.

To prove the second part of the statement assume that $X$ in $\calC$ is such that ${L X}$ is connected in $\calS$. 
If $X$ were initial then so would  ${L X}$  because $L$ preserves finite coproducts and, in particular, the initial object. So $X$ is not initial. 
Now assume that ${U \rightarrow X \leftarrow V}$ is a coproduct diagram.
Then so is ${L U \rightarrow L X \leftarrow L V}$. Since ${L X}$ is connected, either ${L U}$ or ${L V}$ is initial by Lemma~\ref{LemDefIndec}.
As $L$ reflects $0$, either $U$ or $V$ is initial, so $X$ is connected by the same lemma. (Alternatively, if ${\rmB X \rightarrow \rmB(L X)}$ is injective and its codomain is the initial Boolean algebra then so is the domain.)
\end{proof}

We will be particularly interested in extensive categories wherein  every object is a finite coproduct of connected objects. 
For example, $\FinSets$  satisfies this property, but neither  $\Set$ nor $\Stone$ does. 
If $\calA$ is the category of finitely presentable $K$-algebras for a field $K$, then ${\opCat{\calA}}$ also satisfies this property.

\begin{prop}\label{PropBijectiveOnSummands} If ${L \colon \calC \rightarrow \calS}$  is  bijective on complemented subobjects then the following hold.
\begin{enumerate}
\item The functor $L$ preserves connected objects.
\item For any object $X$ in $\calC$, if ${L X}$ is a finite coproduct of connected objects then so is $X$.
\item If every object in $\calS$  is a finite coproduct of connected objects then so is the case in $\calC$.
\item Assume that $\calC$ and $\calS$ have finite products and that $L$ preserves them. If $\calS$ is such that finite products of connected objects are connected then so is the case in $\calC$.
\end{enumerate}
\end{prop}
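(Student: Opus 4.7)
All four assertions flow from the Boolean algebra isomorphism $\rmB X \to \rmB(L X)$ induced by $L$ on each object $X$, combined with the results of Section \ref{s:fcp-functors}.

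\textbf{Part (1).} If $X$ is connected then $\rmB X$ has exactly two elements; transporting across the isomorphism, $\rmB(L X)$ has exactly two elements as well, so $L X$ is connected.

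\textbf{Part (2).} Suppose $L X = Y_1 + \cdots + Y_n$ with each $Y_i$ connected. In $\rmB(L X)$ the family $\{Y_i\}$ is a partition: the $Y_i$ are pairwise disjoint complemented subobjects whose join is $1$. The inverse bijection yields complemented subobjects $u_i \colon X_i \hookrightarrow X$ with $L(u_i)$ identifying with $Y_i \hookrightarrow L X$, and the isomorphism of Boolean algebras transports the partition to a partition $\{X_i\}$ in $\rmB X$. The crux is then a general lemma about extensive categories: any finite partition of $X$ by complemented subobjects realises $X$ as a coproduct $X_1 + \cdots + X_n$. I plan to prove this by induction on $n$; the base $n = 2$ is uniqueness of Boolean complements, since if $X_1, X_2$ are disjoint with join $1$ and $X_1^c$ denotes the chosen complement in the coproduct decomposition $X = X_1 + X_1^c$, then $X_2$ and $X_1^c$ are both Boolean complements of $X_1$ and hence equal as subobjects. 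The inductive step restricts to the principal ideal $\rmB(X_1^c)$ (which contains $X_2, \ldots, X_n$ because each is disjoint from $X_1$) and applies the inductive hypothesis inside. Finally, to see that each $X_i$ is connected, note that post-composition with $u_i$ identifies $\rmB X_i$ with the principal ideal of $\rmB X$ generated by $X_i$; the isomorphism $\rmB X \cong \rmB(L X)$ restricts to $\rmB X_i \cong \rmB Y_i$, and the latter has two elements since $Y_i$ is connected.

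\textbf{Parts (3) and (4).} Part (3) is immediate from part (2) applied to an arbitrary $X \in \calC$. For part (4), let $X_1, X_2 \in \calC$ be connected. By part (1), $L X_1$ and $L X_2$ are connected in $\calS$; since $L$ preserves finite products, $L(X_1 \times X_2) \cong L X_1 \times L X_2$ is connected by the standing hypothesis on $\calS$; and $L$, being injective on complemented subobjects, reflects connected objects by Lemma \ref{LemCharReflectionOfZero}, so $X_1 \times X_2$ is connected.

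The main obstacle will be the sub-lemma in part (2): upgrading a purely Boolean-algebraic partition of $\rmB X$ to a genuine coproduct decomposition of $X$ in $\calC$. Everything else reduces to formal manipulation of the Boolean algebra isomorphism $\rmB X \cong \rmB(L X)$, together with the observation that complemented subobjects of $X$ below a fixed complemented subobject $U \hookrightarrow X$ correspond to complemented subobjects of $U$.
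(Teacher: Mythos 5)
Your proof is correct and follows essentially the same route as the paper's: both parts (1), (3), (4) are argued identically, and part (2) in both cases amounts to lifting the decomposition of $LX$ through the isomorphism $\rmB X \cong \rmB(LX)$. The only differences are minor — the paper establishes connectedness of the lifted pieces by reflection of connectedness (Lemma~\ref{LemCharReflectionOfZero}) rather than your principal-ideal argument, and the partition-to-coproduct sub-lemma you flag as the main obstacle is also relied on silently in the paper's closing sentence of that step, so your induction merely makes explicit something the paper leaves implicit.
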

\begin{proof}
To prove the first item just notice that, by hypothesis, ${\rmB X \rightarrow \rmB(L X)}$ is an isomorphism for each $X$ in $\calC$. Hence if $X$ has exactly two complemented subobjects then so does ${L X}$.

Before proving the second item we establish an auxiliary fact. Let $X$ be  in $\calC$ and let ${u \colon U \rightarrow L X}$ be a complemented subobject in $\calS$ with connected $U$.
Then, as $L$ is surjective on complemented objects by hypothesis, there exists a  complemented subobject ${v \colon V \rightarrow X}$ in $\calC$ such that ${L v = u}$ as subobjects of ${L X}$.  Then ${L V \cong U}$ is connected,  so $V$ is connected by Lemma~\ref{LemCharReflectionOfZero}.
Thus, we have lifted the `connected component' $u$ of ${L X}$ to one of $X$.

To prove the second item let ${(u_i \mid i \in I)}$ be a  finite  family  of pairwise-disjoint complemented subobjects of ${L X}$ with connected domain whose join is the whole of ${L X}$.
For each ${i\in I}$,  let ${v_i}$ be the complemented subobject of  $X$ induced by ${u_i}$ as in the previous paragraph.
As $L$ reflects $0$, the family ${(v_i \mid i\in I)}$ is pairwise disjoint.
Also, ${L \bigvee_{i\in I} v_i = \bigvee_{i \in I} L v_i = \bigvee_{i\in I} u_i}$ is the whole of $LX$.
As $L$ is injective  on complemented subobjects, ${\bigvee_{i\in I} v_i}$ must be the whole of $X$.
In summary, we have lifted the finite coproduct decomposition of $L X$ to one of $X$.

The third item  follows at once from the second.

For the fourth item, let $X$ be the product of a finite family ${(X_i \mid i \in I)}$ of connected objects in $\calC$.
Then ${L X}$ is the product of  ${(L X_i \mid i \in I)}$ because $L$ preserves finite products.
Each ${L X_i}$ is connected because $L$ preserves connected objects by the first item, so ${L X}$ is connected by our hypothesis on $\calS$. 
Hence $X$ is connected by Lemma~\ref{LemCharReflectionOfZero}.
\end{proof}

We next prove a sufficient condition for a functor $L$ as above to be bijective on complemented subobjects.

\begin{lemma}\label{LemReflectionOfZeroAndPreservationsOfIndecsNewNew}
If ${L \colon \calC \rightarrow \calS}$ has a finite-coproduct preserving right adjoint, then   $L$ is bijective on complemented subobjects.
\end{lemma}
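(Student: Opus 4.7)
The plan is to establish injectivity and surjectivity of the natural map $\rmB X \to \rmB(LX)$ separately. Write $L \dashv R$ with unit $\eta \colon \id{\calC} \to RL$, and observe that since $R$ preserves finite coproducts it preserves the empty one, so $R 0 \cong 0$.

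\textbf{Injectivity.} By Lemma~\ref{LemCharReflectionOfZero} it suffices to show that $L$ reflects the initial object. If $LX \cong 0$ for some $X$ in $\calC$, then the unit supplies a morphism $\eta_X \colon X \to RLX \cong R 0 \cong 0$; since the initial object in an extensive category is strict, this map is an isomorphism, whence $X \cong 0$.

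\textbf{Surjectivity.} Given a coproduct diagram $u \colon U \to LX \leftarrow V : v$ in $\calS$, apply $R$ to obtain a coproduct $Ru \colon RU \to RLX \leftarrow RV : Rv$ in $\calC$. Pull the unit $\eta_X$ back along each injection: by universality of coproducts in $\calC$ (Proposition~\ref{PropCharExtensivity}), the resulting objects $X_0$ and $X_1$ assemble into a coproduct $X_0 + X_1 \cong X$, so both are complemented subobjects of $X$. It remains to identify $L(X_0 \hookrightarrow X)$ with $u$ and $L(X_1 \hookrightarrow X)$ with $v$ as subobjects of $LX$. For this, the adjoint transpose of the pullback projection $X_0 \to RU$ produces a morphism $f_0 \colon LX_0 \to U$; applying naturality of the transpose to the commuting square $Ru \circ (X_0 \to RU) = \eta_X \circ (X_0 \hookrightarrow X)$, and using that the transpose of $\eta_X \circ h$ is $Lh$ (a triangle identity), yields $u \circ f_0 = L(X_0 \hookrightarrow X)$. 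Symmetrically one obtains $f_1 \colon LX_1 \to V$ with $v \circ f_1 = L(X_1 \hookrightarrow X)$. Combining these, the induced map $f_0 + f_1 \colon LX_0 + LX_1 \to U + V$ sits over $LX$, and cancelling against the canonical isomorphisms $LX_0 + LX_1 \cong LX \cong U + V$ shows it is itself an isomorphism. Since $\calS$ is extensive, pulling back $f_0 + f_1$ along each coproduct injection of $U + V$ forces each $f_i$ to be an isomorphism, so $LX_i$ coincides with the corresponding summand of $LX$ as a subobject.

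\textbf{Main obstacle.} The heart of the argument is the identification $L(X_i \hookrightarrow X) = u$ (resp.\ $v$) as subobjects, which is obtained via the adjoint transpose. This is formal diagram chasing but is easy to miscompute because one must carefully track which square is being transposed and invoke both naturality of $\eta$ and the relevant triangle identity. Everything else---the injectivity reduction, the construction of the coproduct $X_0 + X_1 \cong X$, and the final deduction from extensivity---is immediate from the hypotheses and the results already collected.
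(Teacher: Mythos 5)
Your proof is correct. The injectivity half is identical to the paper's: transpose the isomorphism $LX\cong 0$ through the adjunction, use $R0\cong 0$, and invoke strictness of the initial object. For surjectivity the core construction is also the same---pull the unit back along $Ru$ (and $Rv$) to produce the candidate complemented subobject(s) of $X$---but the final identification of $L(X_0\hookrightarrow X)$ with $u$ is carried out differently. The paper applies $L$ to the pullback square and pastes on the counit square, then cites \cite[Lemma~{3.7}]{Gates98a} (a finite-coproduct preserving functor between extensive categories preserves pullbacks along coproduct injections) to conclude that both squares are pullbacks, so that $Lv$ and $u$ coincide as subobjects of $LX$ directly. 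You instead compute, via naturality of the adjoint transpose and the triangle identity, that $u\circ f_0=L(X_0\hookrightarrow X)$ and $v\circ f_1=L(X_1\hookrightarrow X)$, and then compare the two coproduct decompositions of $LX$ to force $f_0+f_1$, and hence each $f_i$, to be an isomorphism. Your route is self-contained (it does not need the external preservation-of-pullbacks lemma) at the cost of carrying both summands through the argument and performing the extra two-out-of-three step on the coproduct comparison; the paper's route is shorter once the cited lemma is granted. Both computations of the transpose are correct as you state them.
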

\begin{proof}
Let $R$ be the right adjoint to $L$ and let $\sigma$ and $\tau$ be the unit and counit of ${L \dashv R}$.
We show that $L$ is both injective and surjective on complemented subobjects.

To prove injectivity it is enough to show that $L$ reflects $0$ (Lemma~\ref{LemCharReflectionOfZero}).
So let $X$ be an object in $\calC$ such that ${L X}$ is initial.
Then we may transpose the isomorphism ${L X \rightarrow 0}$ in $\calS$ to a map ${X \rightarrow R 0}$, but ${R 0 = 0}$ because $R$ is assumed to preserve finite coproducts. 
Since the initial object is strict,  $X$ is initial.

We next show that $L$ is surjective on complemented subobjects.
Let ${u \colon U \rightarrow L X}$ be a complemented monomorphism.
Then ${R u}$ is complemented so the left pullback square below exists
\[\xymatrix{
V \ar[d]_-v \ar[r]   & R U \ar[d]^-{R u} && L V \ar[d]_-{L v} \ar[r]  & L(R U) \ar[d]^-{L(R u)} \ar[r]^-{\tau} &  U \ar[d]^-{u} \\
X \ar[r]_-{\sigma} & R (L X)                    && L X \ar[r]_-{L\sigma}    & L(R (L X))   \ar[r]_-{\tau} & L X
}\]
by extensivity of $\calC$. Then the  two squares on the right above obviously commute, and the bottom composite is the identity. Moreover, \cite[Lemma~{3.7}]{Gates98a} implies that both squares are pullbacks, so ${u}$ and ${L v}$ coincide as subobjects of $LX$. 
\end{proof}

Combining Lemma~\ref{LemReflectionOfZeroAndPreservationsOfIndecsNewNew} and Proposition~\ref{PropBijectiveOnSummands} we obtain the following.

\begin{corollary}\label{CorLiftingOfFiniteDecompositions} 
Assume that ${L \colon \calC \rightarrow \calS}$ has a finite-coproduct preserving right adjoint. If every object in $\calS$  is a finite coproduct of connected objects then so is the case in $\calC$.
\end{corollary}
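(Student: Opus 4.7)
The statement is a direct composition of the two principal tools developed in the section, so the plan is essentially mechanical. First I would invoke Lemma~\ref{LemReflectionOfZeroAndPreservationsOfIndecsNewNew}: the hypothesis that $L$ admits a finite-coproduct preserving right adjoint is exactly what is needed to conclude that $L$ is bijective on complemented subobjects, i.e.\ that the natural Boolean-algebra map ${\rmB X \rightarrow \rmB (L X)}$ is an isomorphism for every $X$ in $\calC$.

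Having secured bijectivity on complemented subobjects, I would then feed this into Proposition~\ref{PropBijectiveOnSummands}(3), whose conclusion is exactly what the corollary asserts: if every object of $\calS$ decomposes as a finite coproduct of connected objects, then the same holds in $\calC$. This really is the whole argument; no further work is required.

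If one wants to keep in mind why the combination works, the mental picture is this: given $X$ in $\calC$, the object $L X$ decomposes in $\calS$ as a finite coproduct of connected objects, and the content of item (2) of Proposition~\ref{PropBijectiveOnSummands} is that each such connected summand of $L X$ can be lifted to a complemented subobject of $X$ (using surjectivity of $L$ on complemented subobjects together with the fact that $L$ reflects connectedness, itself a consequence of $L$ reflecting the initial object via Lemma~\ref{LemCharReflectionOfZero}). Injectivity on complemented subobjects then forces the lifted family of subobjects to cover $X$, yielding the desired decomposition of $X$.

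Since both ingredients are already proved, there is no genuine obstacle; the only care needed is to note that \emph{all} the hypotheses of Proposition~\ref{PropBijectiveOnSummands}(3) are supplied by Lemma~\ref{LemReflectionOfZeroAndPreservationsOfIndecsNewNew}, so nothing beyond a one-line citation of the two results is necessary.
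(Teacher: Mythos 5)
Your proposal is correct and matches the paper exactly: the corollary is stated there as the immediate combination of Lemma~\ref{LemReflectionOfZeroAndPreservationsOfIndecsNewNew} (bijectivity on complemented subobjects) with item (3) of Proposition~\ref{PropBijectiveOnSummands}. Nothing further is needed.
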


\section{Decidable objects}\label{s:dec}

Let $\calC$ be an extensive category with finite products.
In particular, $\calC$ has a terminal object $1$.
An object $X$ is called {\em subterminal} if  the unique map ${X \rightarrow 1}$ is monic.

\begin{lemma}\label{LemCharSubterminals} 
For any object $X$ in $\calC$, the following are equivalent.
\begin{enumerate}
\item The object $X$ is subterminal.
\item The diagonal ${\Delta \colon X \rightarrow X\times X}$ is an isomorphism.
\item The projections ${\pr_0, \pr_1 \colon X\times X \rightarrow X}$ are equal.
\end{enumerate}
\end{lemma}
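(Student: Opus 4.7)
The plan is to prove the three-way equivalence by establishing $(1) \Leftrightarrow (2)$ and $(2) \Leftrightarrow (3)$; no use of extensivity is needed, only the existence of finite products, so the argument is a routine categorical verification.

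For $(1) \Rightarrow (2)$, I would invoke the standard categorical fact that a morphism $f \colon X \to Y$ is monic if and only if the diagonal $X \to X \times_Y X$ is an isomorphism. Applying this to the unique map $X \to 1$ gives the claim, since $X \times_1 X = X \times X$. If I wanted to be fully self-contained, I could instead argue directly: assume $X$ is subterminal and consider two arrows $a, b \colon Y \to X$. Then $\langle a, b \rangle \colon Y \to X \times X$ is well-defined, and $\Delta$ being an iso would force $a = \pr_0 \circ \Delta \circ \Delta^{-1} \circ \langle a, b\rangle = \pr_1 \circ \Delta \circ \Delta^{-1} \circ \langle a, b\rangle = b$. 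To establish that $\Delta$ is iso, I set $h \df \Delta^{-1}$ formally by noting that subterminality gives $\pr_0 = \pr_1 \colon X \times X \to X$ (because post-composition with $X \to 1$ is trivially equal, and $X \to 1$ is monic), so setting $h = \pr_0$ yields $h \circ \Delta = \id_X$ and $\Delta \circ h = \id_{X\times X}$ as verified below.

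For $(2) \Rightarrow (3)$, the computation is immediate: from $\pr_i \circ \Delta = \id_X$ for $i=0,1$, post-composing with $\Delta^{-1}$ gives $\pr_0 = \Delta^{-1} = \pr_1$. For the converse $(3) \Rightarrow (2)$, I would show that $\Delta \circ \pr_0 = \id_{X \times X}$ by testing against the two projections: on one hand, $\pr_0 \circ (\Delta \circ \pr_0) = \pr_0$; on the other hand, $\pr_1 \circ (\Delta \circ \pr_0) = \pr_0 = \pr_1$ by hypothesis. Universality of the product then yields $\Delta \circ \pr_0 = \id_{X\times X}$. Together with $\pr_0 \circ \Delta = \id_X$, this exhibits $\Delta$ as an isomorphism with inverse $\pr_0$.

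Finally, $(3) \Rightarrow (1)$ can be deduced from $(3) \Rightarrow (2) \Rightarrow (1)$, or proved directly: given $a,b \colon Y \to X$, the map $\langle a, b\rangle \colon Y \to X \times X$ satisfies $a = \pr_0 \circ \langle a, b \rangle = \pr_1 \circ \langle a, b \rangle = b$. There is no genuine obstacle to anticipate; the only thing to be mildly careful about is the direction $(3) \Rightarrow (2)$, where one must remember to use the universal property of the product to conclude that $\Delta \circ \pr_0$ is the identity on $X \times X$ rather than merely some endomorphism.
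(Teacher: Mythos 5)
Your proof is correct and follows essentially the same route as the paper: $(1)\Rightarrow(2)$ via the kernel-pair/pullback characterisation of monomorphisms, $(2)\Rightarrow(3)$ by uniqueness of inverses, and $(3)\Rightarrow(1)$ by pairing two arrows $a,b\colon Y\to X$ and comparing projections. The extra direct verification of $(3)\Rightarrow(2)$ is fine but redundant given the cycle.
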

\begin{proof}
The first item implies the second because for any monomorphism  ${X \rightarrow 1}$ the following diagram 
$$\xymatrix{
X \ar[d]_-{id} \ar[r]^-{id} & X \ar[d]^-{!} \\
X \ar[r]_-{!} & 1
}$$
is a pullback. 
The second item implies the third because any map has at most one inverse.
To prove that the third item implies the first, let ${f, g \colon Y \rightarrow X}$. Then there exists a unique map ${\twopl{f}{g} \colon Y \rightarrow X \times X}$ such that ${\pr_0 \twopl{f}{g} = f}$ and ${\pr_1 \twopl{f}{g} = g}$.
So ${f = \pr_0 \twopl{f}{g} = \pr_1 \twopl{f}{g} = g}$.
That is, for any object $Y$ there is a unique map ${Y \rightarrow X}$.
This means that the unique map ${X \rightarrow 1}$ is monic.
\end{proof}

We stress that extensivity plays no r\^{o}le in Lemma~\ref{LemCharSubterminals}, which is a general fact about categories with finite products.

\begin{definition}\label{DefDecidable} An object $X$ in $\calC$ is {\em decidable} if the diagonal ${\Delta \colon X \rightarrow X \times X}$ is complemented.
\end{definition}

\begin{remark}\label{r:subt_are_dec}Lemma~\ref{LemCharSubterminals} shows that  subterminal objects in $\calC$ are decidable, and that they may be characterised as those decidable objects  $X$ such that the diagonal ${\Delta \colon X \rightarrow X \times X}$ not only is complemented, but is actually an isomorphism. 
\end{remark}

The full subcategory of decidable objects will be denoted by ${\Dec{\calC} \rightarrow \calC}$. 
If $\calC$ is \emph{lextensive} (i.e.\ extensive and with finite limits) it follows from \cite{CarboniJanelidze} that ${\Dec{\calC}}$ is lextensive and that the inclusion ${\Dec{\calC} \rightarrow \calC}$ preserves finite limits, finite coproducts and that it is closed under subobjects. Moreover, for any $X$, $Y$ in $\calC$, ${X + Y}$ is decidable if, and only if, both $X$ and $Y$ are decidable. 
On the other hand, arbitrary coproducts of decidable objects need not be decidable---consider, for instance, an infinite copower of the terminal object in $\KHaus$ or $\Stone$.

\begin{prop}\label{PropIndecomposablesubterminals} For any object $X$ in  $\calC$  the following are equivalent:
\begin{enumerate}
\item $X$ is subterminal and connected.
\item $X$ is decidable and ${X \times X}$ is connected.
\end{enumerate}
\end{prop}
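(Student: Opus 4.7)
The plan is to prove the two implications separately, leveraging Lemma \ref{LemCharSubterminals} to translate ``subterminal'' into ``diagonal is an isomorphism'' and Lemma \ref{LemDefIndec} to exploit the assumed connectedness.

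For (1) $\Rightarrow$ (2), I would assume $X$ is subterminal and connected. By Lemma~\ref{LemCharSubterminals}, the diagonal ${\Delta \colon X \rightarrow X \times X}$ is an isomorphism. Hence $\Delta$ is trivially complemented (its complement is the coproduct injection from the initial object), so $X$ is decidable. Since $X \times X \cong X$ and $X$ is connected, $X \times X$ is connected (connectedness is invariant under isomorphism, as it is defined purely in terms of the poset of complemented subobjects, which is functorial).

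For (2) $\Rightarrow$ (1), I would assume $X$ is decidable and $X \times X$ is connected. Decidability means ${\Delta \colon X \rightarrow X \times X}$ is a complemented subobject. Since $X \times X$ is connected, Lemma~\ref{LemDefIndec}(2) forces one of two alternatives: either the domain $X$ of $\Delta$ is initial, or $\Delta$ is an isomorphism. The key subtlety is to rule out the first alternative. I would do this by arguing that if $X$ were initial then $X \times X$ would be initial too: the projection ${X \times X \rightarrow X = 0}$ together with the strictness of $0$ in any extensive category (invoked in Section~\ref{s:ext}) forces ${X \times X \cong 0}$, contradicting connectedness of $X \times X$ (which by definition is non-initial). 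Hence $\Delta$ is an isomorphism, so $X$ is subterminal by Lemma~\ref{LemCharSubterminals}, and connected because it is isomorphic (via $\Delta$) to the connected object $X \times X$.

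The only genuine point of care is the strictness step: one needs that in an extensive category with finite products the product of the strict initial object with itself is again initial. This is immediate from the fact that any object admitting a morphism to the strict initial object is itself initial, applied to either projection ${0 \times 0 \rightarrow 0}$. Everything else is formal manipulation of the two lemmas already in hand.
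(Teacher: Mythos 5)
Your proof is correct and follows essentially the same route as the paper's: Lemma~\ref{LemCharSubterminals} for the forward direction, and Lemma~\ref{LemDefIndec} plus strictness of the initial object to rule out the degenerate case in the converse. The only difference is that you spell out the strictness argument that the paper compresses into the parenthetical ``because $X \times X$ is connected''; this is a faithful filling-in, not a divergence.
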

\begin{proof}
If $X$ is subterminal and connected then ${\Delta \colon X \rightarrow X\times X}$ is an isomorphism by Lemma~\ref{LemCharSubterminals}. 
So $X$ is decidable and ${X\times X}$ is as connected as $X$.

For the converse assume that $X$ is decidable and that ${X \times X}$ is connected.
Decidability means that  the subobject ${\Delta \colon X \rightarrow X \times X}$ is complemented;  as  ${X \times X}$ is connected,  $X$ is initial or ${\Delta \colon X \rightarrow X \times X}$ is an isomorphism by Lemma~\ref{LemDefIndec}. But $X$ is not initial (because ${X\times X}$ is connected) so ${\Delta \colon X \rightarrow X \times X}$ is an isomorphism. Then $X$ is as connected as ${X\times X}$, and $X$  is subterminal by Lemma~\ref{LemCharSubterminals}.
\end{proof}

Let $\calS$ be another extensive category with finite products and let ${L \colon \calC \rightarrow \calS}$ preserve finite products and finite coproducts.

\begin{lemma}\label{LemConnectedImpliesIndecomposableNew} Assume that $L$ reflects $0$ and that 
 $1$ is connected in $\calS$.  Then the following hold for every $X$ in $\calC$.
\begin{enumerate}
\item If ${L X = 1}$ then $X$ is connected. 
\item If $X$ in $\calC$ is decidable and ${L X = 1}$ then $X$ is subterminal.
\end{enumerate}
\end{lemma}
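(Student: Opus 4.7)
The plan is to deduce both items directly from the machinery already assembled. The functor $L$ preserves finite coproducts and, by hypothesis, reflects $0$, so Lemma~\ref{LemCharReflectionOfZero} applies: $L$ is injective on complemented subobjects and, crucially, $L$ \emph{reflects} connected objects.

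For item~(1), if $L X = 1$ then $L X$ is connected in $\calS$ by hypothesis, and since $L$ reflects connected objects, $X$ is connected in $\calC$.

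For item~(2), I would exploit that $L$ also preserves finite products. Given $X$ decidable with $L X = 1$, we have
\[
L(X \times X) \;\cong\; L X \times L X \;\cong\; 1 \times 1 \;\cong\; 1,
\]
which is connected in $\calS$. Applying item~(1) to $X \times X$ in place of $X$ yields that $X \times X$ is connected in $\calC$. Since $X$ is decidable and $X \times X$ is connected, Proposition~\ref{PropIndecomposablesubterminals} shows that $X$ is subterminal (and in fact connected), which is the desired conclusion.

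The argument is essentially bookkeeping with results already proved, so there is no substantive obstacle; the only thing to verify carefully is that the hypothesis ``$L$ preserves finite products and coproducts'' is strong enough to invoke both Lemma~\ref{LemCharReflectionOfZero} (for reflection of connected objects) and Proposition~\ref{PropIndecomposablesubterminals} (via connectedness of $X \times X$), and this is immediate from the assumptions of the ambient setup.
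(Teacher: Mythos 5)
Your proof is correct and follows essentially the same route as the paper's: item (1) via Lemma~\ref{LemCharReflectionOfZero} and connectedness of $1$, and item (2) by computing $L(X\times X)=1$, applying item (1) to $X\times X$, and invoking Proposition~\ref{PropIndecomposablesubterminals}. No gaps.
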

\begin{proof}
The functor $L$ reflects $0$ so it  reflects connected  objects by Lemma~\ref{LemCharReflectionOfZero}.
As $1$ is connected in $\calS$ by hypothesis, ${L X = 1}$ implies $X$ connected.

If ${L X = 1}$ then ${L (X \times X) = L X \times L X = 1}$.
So ${X \times X}$  is connected by the first item.
Therefore $X$ is subterminal by Proposition~\ref{PropIndecomposablesubterminals}.
\end{proof}

It easily follows from the definition of decidable object that  $L$ preserves decidable objects. In more detail, the preservation properties of $L$ imply that  the left-bottom composite below 
\[\xymatrix{
\Dec{\calC} \ar[d] \ar@{.>}[r] & \Dec{\calS} \ar[d] \\
\calC \ar[r]_-{L} & \calS
}\]
factors uniquely through the right inclusion and, moreover,  ${\Dec{\calC} \rightarrow \Dec{\calS}}$ preserves  finite products and finite coproducts.
In fact,  ${\Dec{\calC} \rightarrow \Dec{\calS}}$ preserves all the finite limits that $L$ preserves (because  the subcategories of decidable objects are closed under finite limits).

Additionally assume from now on that ${L \colon \calC \rightarrow \calS}$ has a finite-coproduct preserving right adjoint ${R \colon \calS \rightarrow \calC}$.

Notice that under the present hypotheses both $L$ and $R$ preserve  finite products and finite coproducts. 
It follows that the adjunction ${L\dashv R}$  restricts to one between ${\Dec{\calS}}$ and ${\Dec{\calC}}$.

\begin{corollary}\label{CorMainNew} 
If  every decidable object in $\calS$ is a finite coproduct of connected objects then so is the case in $\calC$.
\end{corollary}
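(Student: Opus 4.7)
The plan is to mimic Corollary~\ref{CorLiftingOfFiniteDecompositions}, but applied only to decidable objects. Two ingredients from the preceding sections will suffice. First, since $L$ has the finite-coproduct preserving right adjoint $R$, Lemma~\ref{LemReflectionOfZeroAndPreservationsOfIndecsNewNew} guarantees that $L$ is bijective on complemented subobjects. Second, as remarked in the discussion preceding the statement, the fact that $L$ preserves finite products and complemented monomorphisms entails that $L$ sends decidable objects in $\calC$ to decidable objects in $\calS$.

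Given these, my proof proceeds as follows. Let $X$ be a decidable object of $\calC$. The image $LX$ is then decidable in $\calS$, so by hypothesis $LX$ admits an expression as a finite coproduct of connected objects of $\calS$. I now invoke Proposition~\ref{PropBijectiveOnSummands}(2)---whose only hypothesis is precisely bijectivity of $L$ on complemented subobjects---to lift this coproduct decomposition of $LX$ to a decomposition of $X$ as a finite coproduct of connected objects of $\calC$, which is the desired conclusion.

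Since the heavy lifting is done by Lemma~\ref{LemReflectionOfZeroAndPreservationsOfIndecsNewNew} and Proposition~\ref{PropBijectiveOnSummands}(2), I do not expect any substantive obstacle. An alternative route would be to restrict the adjunction $L \dashv R$ to the decidable subcategories and invoke Corollary~\ref{CorLiftingOfFiniteDecompositions} directly; this would however require verifying that $\Dec{\calC}$ and $\Dec{\calS}$ are themselves extensive, which the paper has only established under the additional assumption of lextensivity. The lifting route just sketched bypasses this issue entirely and uses exactly the statements already at hand.
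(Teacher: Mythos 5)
Your proof is correct. The paper's own proof restricts the adjunction $L \dashv R$ to one $L' \dashv R' \colon \Dec{\calS} \rightarrow \Dec{\calC}$ and then applies Corollary~\ref{CorLiftingOfFiniteDecompositions} to $L'$; you instead stay in the ambient categories, observe that $LX$ is decidable whenever $X$ is, and lift the resulting decomposition of $LX$ via Proposition~\ref{PropBijectiveOnSummands}(2). The two arguments run on the same engine---Lemma~\ref{LemReflectionOfZeroAndPreservationsOfIndecsNewNew} supplying bijectivity on complemented subobjects, followed by the lifting of finite coproduct decompositions---so the difference is mostly one of packaging. Still, your packaging is marginally cleaner: the paper's route tacitly requires that $\Dec{\calC}$ and $\Dec{\calS}$ be extensive and that connectedness computed in $\Dec{\calS}$ (resp.\ $\Dec{\calC}$) agree with connectedness in $\calS$ (resp.\ $\calC$). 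These facts do hold, since the decidable subcategories are closed under subobjects and the inclusions preserve finite coproducts (the paper records this in the lextensive setting via the Carboni--Janelidze reference), but they are extra verifications that your direct appeal to Proposition~\ref{PropBijectiveOnSummands}(2) simply does not need. Your closing remark correctly identifies this as the reason to prefer the direct route.
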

\begin{proof}
The adjunction ${L \dashv R \colon \calS \rightarrow \calC}$ restricts to one ${L' \dashv R' \colon \Dec{\calS} \rightarrow \Dec{\calC}}$,
and every object in ${\Dec{\calS}}$ is a finite coproduct of connected objects by hypothesis.
So we may apply Corollary~\ref{CorLiftingOfFiniteDecompositions} to ${L'\colon \Dec{\calC} \rightarrow \Dec{\calS}}$
\end{proof}

Because $\calS$ is lextensive, there exists an essentially unique coproduct preserving functor ${\FinSets \rightarrow \calS}$ that also preserves the terminal object.
The functor sends a finite set $I$ to the copower ${I\cdot 1}$ in $\calS$.
The categories $\FinSets$, $\Stone$, and other examples have the property that this functor  ${\FinSets \rightarrow \calS}$ coincides with ${\Dec{\calS} \rightarrow \calS}$. Notice that if this condition holds then $1$ is connected in $\calS$, because ${\FinSets = \Dec{\calS} \rightarrow \calS}$ is closed under subobjects and preserves $1$.

\begin{prop}\label{PropMain} If the canonical functor ${\FinSets \rightarrow \calS}$ coincides with ${\Dec{\calS} \rightarrow \calS}$ then every decidable object in $\calC$ is a finite coproduct of connected subterminals.
\end{prop}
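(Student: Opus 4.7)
The plan is to apply Corollary~\ref{CorMainNew} to decompose every decidable object of $\calC$ into connected summands, and then to strengthen each connected summand to a connected subterminal via Lemma~\ref{LemConnectedImpliesIndecomposableNew}(2).

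First I would verify the hypothesis of Corollary~\ref{CorMainNew}. The assumption that $\FinSets \to \calS$ coincides with $\Dec{\calS} \to \calS$ means that every decidable object of $\calS$ is (isomorphic to) a finite copower $I\cdot 1$ of the terminal. As observed in the paragraph preceding the statement, the same hypothesis forces $1$ to be connected in $\calS$, so each $I\cdot 1$ is a finite coproduct of connected objects. Corollary~\ref{CorMainNew} then produces, for every decidable $X$ in $\calC$, a decomposition $X \cong X_1 + \cdots + X_n$ with each $X_i$ connected in $\calC$.

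Each $X_i$ is automatically decidable: since finite coproducts in $\calC$ are decidable if and only if all summands are (as recalled before Proposition~\ref{PropIndecomposablesubterminals}), decidability of $X$ transfers to each $X_i$. It remains to prove that every connected decidable $X_i$ is subterminal. For this I would compute $L X_i$. Because $L$ has a finite-coproduct preserving right adjoint, Lemma~\ref{LemReflectionOfZeroAndPreservationsOfIndecsNewNew} gives that $L$ is bijective on complemented subobjects; in particular $L$ reflects $0$ (Lemma~\ref{LemCharReflectionOfZero}) and preserves connected objects (Proposition~\ref{PropBijectiveOnSummands}(1)). Since $L$ also preserves decidable objects, $L X_i$ is a connected decidable object of $\calS$, hence by hypothesis a finite copower $I\cdot 1$; connectedness forces $|I|=1$, so $L X_i \cong 1$. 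With the hypotheses of Lemma~\ref{LemConnectedImpliesIndecomposableNew}(2) now all in place (reflection of $0$ by $L$, connectedness of $1$ in $\calS$, decidability of $X_i$, and $L X_i \cong 1$), that lemma delivers subterminality of $X_i$, completing the argument.

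The only mild obstacle is the bookkeeping needed to check that the hypotheses of the auxiliary results are available: neither reflection of $0$ by $L$ nor connectedness of $1$ in $\calS$ is stated as an assumption of the proposition, but both follow at once, the former from the existence of the finite-coproduct preserving right adjoint (via Lemma~\ref{LemReflectionOfZeroAndPreservationsOfIndecsNewNew} and Lemma~\ref{LemCharReflectionOfZero}), the latter from the identification $\Dec{\calS}=\FinSets$ together with closure of the latter subcategory under subobjects. Beyond this, the proof is a straightforward chain of invocations of the lemmas already proved.
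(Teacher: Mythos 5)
Your proposal is correct and follows essentially the same route as the paper's own proof: Corollary~\ref{CorMainNew} to get the decomposition into connected decidable summands, then the computation $L X_i \cong 1$ combined with Lemma~\ref{LemConnectedImpliesIndecomposableNew} to upgrade each connected decidable summand to a subterminal. The extra bookkeeping you supply (decidability of the summands, connectedness of $1$ in $\calS$, reflection of $0$) is exactly what the paper leaves implicit or cites in passing.
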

\begin{proof}
By Corollary~\ref{CorMainNew} every decidable object in $\calC$ is a finite coproduct of connected objects. So it is enough to prove that every connected decidable  object in $\calC$ is subterminal. For this, let $X$ be connected and decidable. 
Then ${L X}$ is decidable, because $L$ preserves finite products and finite coproducts, and it is connected by Lemma~\ref{LemReflectionOfZeroAndPreservationsOfIndecsNewNew} and Proposition~\ref{PropBijectiveOnSummands}.
By hypothesis, the canonical ${\FinSets \rightarrow \calS}$ coincides with ${\Dec{\calS} \rightarrow \calS}$ so ${L X = 1}$. 
Hence $X$ is decidable and ${L X = 1}$. Therefore  $X$ is subterminal  by Lemma~\ref{LemConnectedImpliesIndecomposableNew}.
\end{proof}

For a lextensive category $\calE$ we have considered several conditions.
\begin{enumerate}
\item Every decidable object is a finite coproduct of connected objects.
\item Every decidable object is a finite coproduct of connected subterminals.
\item The canonical functor ${\FinSets \rightarrow \calE}$ coincides with the inclusion ${\Dec{\calE} \rightarrow \calE}$.
\end{enumerate}
\noindent For a field $K$, ${\opCat{(K/\Ring)}}$ satisfies the first condition but not the second.  The categories $\Stone$ and $\KHaus$ satisfy the third condition.
The  third condition implies the second which, in turn, implies the first.
Proposition~\ref{PropMain} shows that for certain adjunctions ${L \dashv R \colon \calS \rightarrow \calC}$, if $\calS$ satisfies the third condition then $\calC$ satisfies the second. This will be used to prove  that ${\opCat{\MV}}$ satisfies the second condition (Theorem~\ref{ThmMain}).

\section{The coextensive category of MV-algebras}\label{s:MVcoext}

 For background on MV-algebras we refer to the standard textbooks \cite{CignoliEtAlBook, MundiciAdvanced}, of which we also follow the notation. 
In this section we show that $\MV$ is coextensive by proving that products are {\em co}disjoint and {\em co}universal (Proposition~\ref{PropCharExtensivity}). 

\begin{lemma}\label{LemCodisjointProds} 
Let $\calA$ be a regular category with finite colimits.
If ${0 \rightarrow 1}$ is a regular epimorphism then  products are codisjoint.
\end{lemma}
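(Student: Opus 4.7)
The plan is to verify the two conditions dual to Definition~\ref{DefDisjointCopros}(1) that define codisjoint products: namely, both product projections $\pi_X, \pi_Y$ of $X \times Y$ are regular epimorphisms and the commutative square
\[
\xymatrix{
X \times Y \ar[d]_-{\pi_X} \ar[r]^-{\pi_Y} & Y \ar[d] \\
X \ar[r] & 1
}
\]
is a pushout.

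First I would bootstrap: for every object $W$ the unique map $W \to 1$ is a regular epimorphism, because $0 \to 1$ factors through $W$ and the standard cancellation property ``if $fg$ is a regular epi then $f$ is a regular epi'' applies (easily verified via the regular epi--mono factorisation). Hence both $X \to 1$ and $Y \to 1$ are regular epimorphisms. Since $X \times Y$ is the pullback of $Y \to 1$ along $X \to 1$, and regular epimorphisms are pullback-stable in any regular category, the projection $\pi_X : X \times Y \to X$ is a regular epimorphism, and symmetrically for $\pi_Y$.

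For the pushout condition, take $f : X \to Z$ and $g : Y \to Z$ with $f\pi_X = g\pi_Y$; I would produce a unique $h : 1 \to Z$ with $hu = f$ and $hv = g$, writing $u, v$ for the unique maps to $1$. Since $u$ is regular epi, it is the coequaliser of its kernel pair $p_0, p_1 : X \times X \rightrightarrows X$, so factoring $f$ uniquely through $u$ reduces to checking $f p_0 = f p_1$. Form the kernel pair $r_0, r_1 : (X\times Y)\times_Y (X\times Y) \rightrightarrows X\times Y$ of $\pi_Y$; the hypothesis $f\pi_X = g\pi_Y$ together with $\pi_Y r_0 = \pi_Y r_1$ yields $f\pi_X r_0 = f\pi_X r_1$. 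Identifying this kernel pair with $X \times X \times Y$, the canonical projection to $X \times X$ is the pullback of $Y \to 1$ along $X \times X \to 1$, hence a regular epimorphism; and $\pi_X r_i$ factors as $p_i$ composed with this projection, so cancelling it yields $f p_0 = f p_1$. Thus $f = h_X u$ for a unique $h_X$, and symmetrically $g = h_Y v$ for a unique $h_Y$. Finally, $h_X u \pi_X = f\pi_X = g\pi_Y = h_Y v \pi_Y$ and $u \pi_X = v \pi_Y$ is itself a regular epi (applying the bootstrap step to $X \times Y$), so $h_X = h_Y$, giving the required unique mediating map $h$.

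The hypothesis ``$0 \to 1$ is a regular epi'' enters exclusively in the bootstrap step; once every object has a regular-epi map to $1$, the regularity of $\calA$ does all the remaining work. The main delicate point is the pushout verification: one must spot that the kernel pair of $\pi_Y$ projects down to $X \times X$ precisely as a pullback of the regular epi $Y \to 1$, which is the observation that makes the epi-cancellation giving $fp_0 = fp_1$ possible.
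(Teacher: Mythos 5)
Your proof is correct and follows essentially the same route as the paper's: bootstrap that every map to $1$ is a regular epimorphism, observe that the projections of $X\times Y$ are regular epimorphisms because they are pullbacks of such, and then exploit the presentation of these maps as coequalizers of their kernel pairs. The only difference is cosmetic: the paper packages the final step as a general lemma (a map of coequalizer forks with an epimorphism between the kernel-pair objects induces a pushout), whereas you unwind that lemma into a direct verification of the universal property of the pushout.
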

\begin{proof}
Let $A$ be an object in $\calA$.
As the composite ${0 \rightarrow A \rightarrow 1}$ is a regular epimorphism by hypothesis, so is ${A \rightarrow 1}$ by regularity of $\calA$.
That is, not only ${0 \rightarrow 1}$ but actually any ${A \rightarrow 1}$ is a regular epimorphism. 
As every regular epimorphism is the coequalizer of its kernel pair, ${A \rightarrow 1}$ is the coequalizer of the two projections ${A \times A \rightarrow A}$.
Also, as products of regular epimorphisms are epimorphisms, the product of ${id \colon A \rightarrow A}$ and ${B \rightarrow 1}$ is a regular epimorphism ${A \times B \rightarrow A \times 1}$. That is, the projection ${A \times B \rightarrow A}$ is a regular epimorphism.

To complete the proof we recall a basic fact about colimits:
for a commutative diagram as on the left below
\[\xymatrix{
E \ar[d]_-e \ar[r]<+1ex>^-{e_0} \ar[r]<-1ex>_-{e_1} & D  \ar[d]_-d \ar[r] & B \ar[d] &
  (A\times A) \times B \ar[d]_-{\pr_0} \ar[rr]<+1ex>^-{\pr_0 \times B} \ar[rr]<-1ex>_-{\pr_1 \times B} && A\times B  \ar[d]_-{\pr_0} \ar[r]^-{\pr_1} & B \ar[d] \\ 
F \ar[r]<+1ex>^-{f_0} \ar[r]<-1ex>_-{f_1} & A \ar[r] & Q &
  A\times A  \ar[rr]<+1ex>^-{\pr_0} \ar[rr]<-1ex>_-{\pr_1} && A   \ar[r] & 1
}\]
such that ${d e_i = f_i e}$ for ${i \in \{0, 1\}}$,  the top  and bottom forks are coequalizers and $e$ is epic, the inner right square is a pushout.  Applying this observation to the diagram on the right above we obtain that the inner right square in that diagram is a pushout.
\end{proof}

In particular, if $\calA$ is the category of models for an algebraic theory with at least one constant then the initial object $0$ is non-empty and so ${0 \rightarrow 1}$ is a regular epimorphism. This is the case, of course, for ${\calA = \MV}$.

 In $\Ring$, couniversality of products is entailed by the intimate relationship between idempotents and product decompositions. The situation for $\MV$ is analogous. An element $b$ of an MV-algebra $A$ is called \emph{Boolean} if it satisfies one of the following equivalent conditions (see \cite[1.5.3]{CignoliEtAlBook}):

\[ 
b\oplus b=b \quad\quad
 b\odot b=b \quad\quad
 b\vee\neg b=1 \quad\quad
 b\wedge \neg b=0.
\]

For  ${x\in A}$ we let ${A \rightarrow A[x^{-1}]}$ be the quotient map induced by the congruence on $A$ generated by the pair $(x,1)$.

\begin{lemma}\label{LemPOofLocalizations} For any ${f \colon A \rightarrow B}$ in $\MV$ the following diagram is a pushout
\[\xymatrix{
A \ar[d]_-{f} \ar[r] & A[x^{-1}] \ar[d] \\
B \ar[r] & B[(f x)^{-1}]
}\]
where the right vertical map is the unique one making the square commute.
\end{lemma}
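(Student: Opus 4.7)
The plan is to unwind the universal property of the quotient $A[x^{-1}]$, which is the coequaliser of the pair $(x,1)\colon 1 \rightrightarrows A$ (equivalently, the coequaliser of $x,1\colon \mathbf{F}(1)\to A$ where $\mathbf{F}(1)$ is the free MV-algebra on one generator). In other words, for any MV-algebra $C$ and any homomorphism $g\colon A\to C$, $g$ factors through $A\to A[x^{-1}]$ if and only if $g(x)=1$, and the factorisation is then unique. The asserted pushout then reduces to two applications of this universal property.

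First I would verify that the right vertical map exists. The composite $A\xrightarrow{f} B\to B[(fx)^{-1}]$ sends $x$ to $fx$, which in turn is sent to $1$ by construction of $B\to B[(fx)^{-1}]$. By the universal property recalled above, there is a unique homomorphism $\widetilde{f}\colon A[x^{-1}]\to B[(fx)^{-1}]$ making the square commute.

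Next I would check the pushout property. Suppose $g\colon A[x^{-1}]\to C$ and $h\colon B\to C$ satisfy $g\circ q_A=h\circ f$, where $q_A\colon A\to A[x^{-1}]$ and $q_B\colon B\to B[(fx)^{-1}]$ denote the quotient maps. Then
\[
h(fx)=h(f(x))=g(q_A(x))=g(1)=1,
\]
so by the universal property of $B\to B[(fx)^{-1}]$ there exists a unique $k\colon B[(fx)^{-1}]\to C$ with $k\circ q_B=h$. It remains to verify $k\circ \widetilde{f}=g$: on elements of the form $q_A(a)$ one computes
\[
k(\widetilde{f}(q_A(a)))=k(q_B(f(a)))=h(f(a))=g(q_A(a)),
\]
and since $q_A$ is surjective this forces $k\circ\widetilde{f}=g$. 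Uniqueness of $k$ is immediate from surjectivity of $q_B$.

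There is really no serious obstacle here: the statement is a formal consequence of the fact that localisation at an element is the quotient by a principal congruence, and any such ``coequaliser-of-a-pair-into-$1$'' construction is stable under pushout in any variety. The only small care needed is to identify $A[x^{-1}]$ explicitly as the coequaliser of the constant maps picking out $x$ and $1$, so that the factorisation criterion ``sends $x$ to $1$'' is available for $B[(fx)^{-1}]$.
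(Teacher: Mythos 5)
Your proof is correct and is exactly the argument the paper has in mind: its own proof consists of the single line ``Standard, using the universal property of the (horizontal) quotient homomorphisms,'' which is precisely the factorisation criterion you spell out. No issues.
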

\begin{proof}
Standard, using the universal property of the (horizontal) quotient homomorphisms.
\end{proof}

\begin{lemma}\label{l:universalinv}
For any MV-algebra $A$ and every Boolean element $x\in A$, let $\langle \neg x \rangle$ be the ideal of $A$ generated by $\{\neg x\}$. Then the quotient $q\colon A\to A/\langle \neg x\rangle$ has the universal property of ${A \rightarrow A[x^{-1}]}$.
\end{lemma}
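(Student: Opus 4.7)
The plan is to invoke the standard bijective correspondence between MV-algebra congruences and ideals: a congruence $\theta$ on $A$ corresponds to the ideal $I_\theta = \{a\in A : a\,\theta\, 0\}$, and conversely an ideal $I$ determines the congruence $a \sim_I b$ iff $d(a,b) \in I$, where $d(a,b) = (a\ominus b)\oplus(b\ominus a)$ is the Chang distance. Under this correspondence, the congruence on $A$ generated by a single pair $(a,b)$ matches the ideal generated by $d(a,b)$.

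First I would observe that $d(x,1) = (x\odot\neg 1)\oplus(1\odot\neg x) = 0\oplus\neg x = \neg x$, independently of $x$ being Boolean. Consequently, the congruence on $A$ generated by the pair $(x,1)$ is precisely the congruence associated with the ideal $\langle \neg x\rangle$. Since $A[x^{-1}]$ is \emph{defined} as the quotient of $A$ by the congruence generated by $(x,1)$, and $A/\langle \neg x\rangle$ is the quotient by the congruence associated to $\langle \neg x \rangle$, the two quotient maps coincide up to canonical isomorphism, and so $q$ has the required universal property.

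For readers who prefer a direct verification, I would spell out the universal property in elementary terms. On the one hand, $\neg x\in\langle\neg x\rangle$ gives $q(\neg x)=0$, hence $q(x)=1$. On the other hand, if $f\colon A\to B$ satisfies $f(x)=1$, then $f(\neg x)=\neg f(x)=0$, so $\ker f$ (which is an ideal of $A$) contains $\neg x$ and hence contains the whole ideal $\langle\neg x\rangle$; therefore $f$ factors uniquely through $q$, the factorisation being unique because $q$ is surjective.

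There is no real obstacle: the lemma is essentially a computation inside the ideal–congruence correspondence, and the only content is the identity $d(x,1)=\neg x$. The hypothesis that $x$ is Boolean plays no role in this specific statement; it will instead be needed downstream to combine this quotient with its companion $A/\langle x\rangle$ into a binary product decomposition of $A$, which is the couniversality of products the section is aiming for.
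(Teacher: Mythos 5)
Your proof is correct and your ``direct verification'' paragraph is essentially identical to the paper's own argument: $k x = 1$ forces $\neg x \in \ker k$, hence $\langle \neg x\rangle \subseteq \ker k$, and the unique factorisation through $q$ follows from the universal property of quotients. Your further observation that the Boolean hypothesis is not needed here (since $d(x,1)=\neg x$ for arbitrary $x$) is accurate; the paper invokes Booleanness only in the subsequent product-splitting lemma.
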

\begin{proof}
If $k\colon A \to B$ is such that ${k x = 1}$  then $\neg x \in \ker{k}$, so $\langle\neg x\rangle\seq\ker{k}$. By the universal property of quotients there is exactly one homomorphism $c\colon A/\langle\neg x\rangle\to  C$ such that $cq=k$.
\end{proof}

\begin{lemma}\label{l:productsplitting}
In  $\MV$, the diagram
\[\xymatrix{
D & \ar[l]^-{q_0} C \ar[r]_-{q_1} & E
}\]
is a product  precisely when there exists a Boolean element ${x\in C}$ such that $q_0$ has the universal property of ${C \rightarrow C[(\neg x)^{-1}]}$ and   $q_1$ has the universal property of ${C \rightarrow C[x^{-1}]}$. 
When this is the case, the element $x\in C$ with the foregoing property is unique.
\end{lemma}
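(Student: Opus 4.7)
The plan is to translate everything into quotients: by Lemma~\ref{l:universalinv}, asking that $q_0\colon C\to D$ realise $C\to C[(\neg x)^{-1}]$ is the same as asking that $q_0$ be the quotient by the principal ideal $\langle x\rangle$ (since $\neg\neg x=x$), and likewise asking that $q_1\colon C\to E$ realise $C\to C[x^{-1}]$ is the same as asking that $q_1$ be the quotient by $\langle\neg x\rangle$. So the lemma reduces to the statement that finite product decompositions $D\leftarrow C\to E$ correspond bijectively to Boolean elements $x\in C$, with $D\cong C/\langle x\rangle$ and $E\cong C/\langle\neg x\rangle$ in an essentially unique way.

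For the converse direction ($\Leftarrow$) I would invoke the standard MV-algebraic fact that, whenever $x\in C$ is Boolean, the pair of quotient maps
\[C\longrightarrow C/\langle x\rangle\times C/\langle\neg x\rangle\]
is an isomorphism. An explicit inverse is $(a/\langle x\rangle, b/\langle\neg x\rangle)\mapsto (a\wedge\neg x)\vee(b\wedge x)$; the Boolean identities $x\wedge\neg x=0$ and $x\vee\neg x=1$ are exactly what is needed to verify that this map is well-defined and is a two-sided inverse. Combined with the translation above, this shows that the given diagram is a product.

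For the forward direction ($\Rightarrow$), assume $C\cong D\times E$ via $q_0$ and $q_1$ and take $x\in C$ to be the element corresponding to $(0_D,1_E)$. Then $x\oplus x=x$ is immediate, so $x$ is Boolean. It remains to identify $\ker q_0$ with $\langle x\rangle$ and $\ker q_1$ with $\langle\neg x\rangle$. For the first, since $x$ is Boolean, $n\cdot x=x$ for every $n\in\N$, so the principal ideal $\langle x\rangle=\{a\in C:a\leq n\cdot x\text{ for some }n\}$ coincides with the down-set $\{a\in C:a\leq x\}$; under the isomorphism $C\cong D\times E$ this down-set is exactly $\{0_D\}\times E=\ker q_0$. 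The symmetric argument identifies $\ker q_1$ with $\langle\neg x\rangle$, so $q_0$ and $q_1$ have the required universal properties.

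For uniqueness, the same down-set description shows that a Boolean element $b$ is the greatest element of $\langle b\rangle$. Hence if $x,y$ are Boolean elements both having the property in the statement, then $\langle x\rangle=\ker q_0=\langle y\rangle$ forces $x=y$. I expect the only point requiring care is the identity $\langle b\rangle=\{a:a\leq b\}$ for Boolean $b$, which however follows at once from $n\cdot b=b$; the rest is straightforward bookkeeping once Lemma~\ref{l:universalinv} has reduced the question to quotients by principal ideals.
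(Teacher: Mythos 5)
Your proof is correct and follows essentially the same route as the paper's: both identify $x$ with the element $(0_D,1_E)$, use the down-set description $\langle b\rangle=\{a:a\leq b\}$ for Boolean $b$ to match kernels with principal ideals, translate via Lemma~\ref{l:universalinv}, and establish the converse by exhibiting the decomposition $c\mapsto(c_0\wedge\neg x)\vee(c_1\wedge x)$. The only cosmetic differences are that you package the converse as an explicit two-sided inverse where the paper checks injectivity and surjectivity separately, and you derive uniqueness from the fact that a Boolean element is the greatest element of the ideal it generates rather than from the uniqueness of the element mapping to $(0,1)$.
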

\begin{proof}
Assume the diagram is a product. Then there is a unique  ${x\in C}$  such that $q_ix=i$, $i=0,1$. This $x$ is Boolean because $0$ and $1$ are. Hence $\neg x$ is Boolean too, and thus  $\oplus$-idempotent; therefore, $\langle  \neg x \rangle=\{c\in C\mid c \leq  \neg x\}$. If $c\leq \neg x$ then $q_1c\leq q_1(\neg x)=0$, so $q_1c=0$ and $c\in \ker{q_1}$. If $c\in \ker{q_1}$ then $q_1c=0\leq q_1(\neg x)$ and $q_0c\leq 1=q_0(\neg x)$, so $c\leq 	\neg x$ by the definition of product order. We conclude $\ker{q_1}=\langle \neg x \rangle$. The projection $q_1$ is surjective  so Lemma \ref{l:universalinv} entails that $q_1$ has the universal property of ${C \rightarrow C[x^{-1}]}$. 
An entirely similar argument applies to $q_0$.

Conversely, assume $q_0$ and $q_1$ have the universal properties in the statement. 
By Lemma \ref{l:universalinv} we may identify ${q_0}$ with ${C \rightarrow C/\langle x\rangle}$ and $q_1$ with ${C \rightarrow C/\langle \neg x\rangle}$. So it is enough to show that the canonical ${C \rightarrow C/\langle x\rangle \times  C/\langle \neg x\rangle}$ is bijective.
Injectivity follows because if $c\leq x, \neg x$ then $c\leq x\wedge \neg x=0$, so ${\langle x\rangle \cap \langle\neg x\rangle = 0}$.
To prove surjectivity, let ${(q_0 c_0 , q_1 c_1) \in C/\langle x\rangle \times  C/\langle \neg x\rangle}$ with ${c_0, c_1 \in C}$ and consider 
${c = (c_0 \wedge \neg x) \vee (c_1 \wedge x) \in  C}$. It is easy to check that ${C \rightarrow C/\langle x\rangle \times  C/\langle \neg x\rangle}$ sends $c$ in the domain  to ${(q_0 c_0 , q_1 c_1)}$ in the codomain.
\end{proof}
\begin{remark}\label{r:notnew}
The content of Lemma \ref{l:productsplitting} is far from new, cf.\ e.g.\ \cite[Section 6.4]{CignoliEtAlBook} and \cite[Proposition~{3.9}]{CMZ}. However, having expressed that content in the form that is most suitable for the sequel, we have included a proof for the reader's convenience.
\end{remark}

\begin{prop}\label{PropMVisCoextensive} 
$\MV$ is coextensive.
\end{prop}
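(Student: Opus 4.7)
The plan is to invoke Proposition~\ref{PropCharExtensivity} in its dual form: $\MV$ is coextensive precisely when in $\MV$ products are codisjoint and couniversal. These two properties are exactly what the preceding lemmas have been set up to deliver.

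For codisjointness, I would simply appeal to Lemma~\ref{LemCodisjointProds}. The category $\MV$ is a variety of algebras, hence regular and complete/cocomplete, and the signature contains at least one constant (e.g.\ $0$ or $1$); therefore the unique homomorphism from the initial MV-algebra to the terminal one is surjective, and surjective homomorphisms in a variety are regular epimorphisms. Lemma~\ref{LemCodisjointProds} then gives codisjointness of products at once.

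For couniversality, consider any product diagram $D \xleftarrow{q_0} C \xrightarrow{q_1} E$ in $\MV$ and any homomorphism $f\colon C\to F$; couniversality requires that the pushouts of $q_0$ and $q_1$ along $f$ again form a product diagram. By Lemma~\ref{l:productsplitting}, the original product is encoded by a unique Boolean element $x\in C$: the maps $q_0$ and $q_1$ are the universal arrows ${C\to C[(\neg x)^{-1}]}$ and ${C\to C[x^{-1}]}$, respectively. The element $fx\in F$ is again Boolean, since $\oplus$ and $\odot$ are preserved by homomorphisms and any of the defining equations for Booleanness transfers under $f$; similarly $f(\neg x)=\neg fx$. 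Lemma~\ref{LemPOofLocalizations} identifies the two pushouts along $f$ with ${F\to F[(\neg fx)^{-1}]}$ and ${F\to F[(fx)^{-1}]}$. A second application of Lemma~\ref{l:productsplitting}, this time read in the converse direction and applied to the Boolean element $fx\in F$, shows that these two maps constitute a product diagram in $\MV$. Hence the pushouts of $q_0,q_1$ along $f$ form a product, establishing couniversality.

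I do not anticipate a genuine obstacle here: the nontrivial conceptual work has already been carried out in Lemmas \ref{LemCodisjointProds}, \ref{LemPOofLocalizations}, and \ref{l:productsplitting}, and all that remains for the proof is to verify that Boolean elements are preserved by homomorphisms and to assemble the two product-splitting correspondences with the pushout description of localisations. The only small point to make explicitly is the invariance of Booleanness under $f$ and the identity $f(\neg x)=\neg fx$, which are immediate from the fact that $f$ preserves $\oplus$, $\neg$, $0$, and $1$.
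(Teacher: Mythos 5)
Your proposal is correct and follows essentially the same route as the paper: codisjointness via Lemma~\ref{LemCodisjointProds} (using that the initial MV-algebra is non-empty, so ${0\to 1}$ is a regular epimorphism), and couniversality by encoding the product through the Boolean element of Lemma~\ref{l:productsplitting}, transporting it along the homomorphism, and identifying the pushouts via Lemma~\ref{LemPOofLocalizations} before applying Lemma~\ref{l:productsplitting} in the converse direction. The only cosmetic difference is that you work with an arbitrary product diagram ${D \leftarrow C \rightarrow E}$ rather than the canonical projections of ${A\times B}$, which changes nothing of substance.
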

\begin{proof}
Any algebraic category is complete and cocomplete, so in particular it has finite products and pushouts.
We appeal to the characterization of extensive categories in Proposition~\ref{PropCharExtensivity}.
Codisjointness of products follows from Lemma~\ref{LemCodisjointProds} or from a direct calculation observing that the projections of a product ${A \times B}$ send ${(0, 1)}$ to $0$ and $1$ respectively, so ${0 = 1}$ must hold in the pushout.

It remains to show that products are couniversal.
So we consider the pushout of a product diagram as below
\[\xymatrix{
A \ar[d]_-{h} &  \ar[l]_-{{\rm pr}_0}  A\times B \ar[d]^-{f} \ar[r]^-{{\rm pr}_1} & B \ar[d]^-{k} \\
D & \ar[l]^-{q_0} C \ar[r]_-{q_1} & E
}\]
and prove that the bottom span is product diagram.
Indeed, observe that the Boolean element ${(0, 1) \in A\times B}$ is sent to the Boolean element ${x\coloneqq {f(1, 0) \in C}}$ so, by Lemma~\ref{l:productsplitting},  it is enough to check that ${q_0}$ inverts ${\neg x}$ and $q_1$ inverts ${x}$;
but this follows from Lemma~\ref{LemPOofLocalizations}.
\end{proof}

Although it was not necessary to prove the main result of this section, it seems worthwhile to observe that, in the context of algebraic categories, Lemma~\ref{LemCodisjointProds} may be strengthened to a characterisation.

\begin{prop}\label{l:codisjprodalgcat}
In any algebraic category, binary products are codisjoint if, and only if, the initial algebra has non-empty underlying set.
\end{prop}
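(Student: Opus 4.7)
The plan is to prove both implications by exploiting the standard description of regular epimorphisms in algebraic categories as surjective homomorphisms, together with the fact that the forgetful functor to $\Set$ preserves limits.

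For sufficiency, I would simply reduce to Lemma~\ref{LemCodisjointProds}. That lemma requires $0 \rightarrow 1$ to be a regular epimorphism, and in any algebraic category (a variety of algebras) regular epimorphisms are exactly the surjections on underlying sets. Since the terminal algebra $1$ has a singleton underlying set, the unique map $0 \rightarrow 1$ is surjective precisely when $|0| \neq \emptyset$. Thus the hypothesis yields that $0 \rightarrow 1$ is a regular epimorphism, and Lemma~\ref{LemCodisjointProds} gives codisjointness of binary products.

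For necessity I would argue the contrapositive. Suppose $|0| = \emptyset$. Since the initial algebra is the free algebra on the empty set of generators, an empty initial algebra means the underlying signature has no $0$-ary operation symbols; equivalently, the empty set carries a (unique) algebra structure, and this empty algebra is $0$. Because the forgetful functor to $\Set$ preserves products, $|0 \times 0| = |0| \times |0| = \emptyset$, so $0 \times 0 \cong 0$ and both projections ${\rm pr}_0,{\rm pr}_1 \colon 0\times 0 \rightarrow 0$ coincide with the identity on $0$. The pushout of the identity along itself is $0$ itself, whereas the terminal algebra satisfies $|1| = 1 \neq 0 = |0|$; in particular $0 \not\cong 1$. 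Hence the defining square for codisjointness of the product $0 \times 0$ is not a pushout, so binary products are not codisjoint.

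There is essentially no serious obstacle: the whole argument hinges on two standard facts about algebraic categories, namely that regular epimorphisms are precisely the surjective homomorphisms and that the forgetful functor preserves products (and hence detects when $0$ is empty). The only point demanding a moment's care is ruling out a degenerate situation in which $0$ could be both empty and terminal; this is excluded because $|1|=1$ in any algebraic category, so $|0|=\emptyset$ forces $0\neq 1$ and thus produces a genuine failure of codisjointness.
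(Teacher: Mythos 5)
Your proof is correct, and the sufficiency direction coincides with the paper's: non-emptiness of $0$ makes $0\rightarrow 1$ a surjection, hence a regular epimorphism, and Lemma~\ref{LemCodisjointProds} applies. For the converse, both arguments examine the codisjointness square built on the product $0\times 0$, but they extract the conclusion differently. The paper argues directly: the projection $0\times 0\rightarrow 0$ is split epic (split by the diagonal), pushouts of split epimorphisms are regular epimorphisms, so codisjointness forces $0\rightarrow 1$ to be a regular epimorphism, i.e.\ a surjection onto the one-element algebra, whence $0$ is non-empty. You instead take the contrapositive and compute: if $|0|=\emptyset$ then $0\times 0\cong 0$ with identity projections, the actual pushout of that span is $0$, and since $|1|=1\neq 0$ the codisjointness square cannot be a pushout. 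Both routes are sound and rest on the same two standard facts about algebraic categories (regular epimorphisms are exactly the surjections; the forgetful functor preserves limits, so $|1|=1$). The paper's version is marginally slicker in that it never needs to discuss the empty algebra or the signature and isolates a reusable categorical fact (stability of split epimorphisms under pushout), while yours is more elementary and makes the failure in the empty case completely explicit. Your closing remark correctly identifies the one point needing care: $|1|=1$ is what rules out the degenerate possibility $0\cong 1$.
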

\begin{proof}
If the initial algebra $0$ is not empty then the unique map ${0 \rightarrow 1}$ is a regular epimorphism so we can apply 
Lemma~\ref{LemCodisjointProds}.
For the converse implication notice that the following square
\[\xymatrix{
0 \times 0 \ar[d] \ar[r] & 0 \ar[d]\\
0\ar[r] & 1
}\]
is a pushout by hypothesis. As any of the projections ${0\times 0 \rightarrow 0}$ is split epic, its pushout ${0 \rightarrow 1}$ is a regular epimorphism, so $0$ must be non-empty.
\end{proof}

\section{Subterminals in \opCat{\MV}, and rational algebras}\label{s:subt}
The aim of this section is to characterize subterminal objects in ${\opCat{\MV}}$. 
Perhaps unexpectedly, the following fact will play an important r\^{o}le.

\begin{lemma}\label{LemMonosAreStableUnderPOinMV} Monomorphisms in $\MV$ are stable under pushout. 
\end{lemma}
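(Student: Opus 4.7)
The approach is to derive the lemma from the \emph{amalgamation property} (AP) of $\MV$, which is classical.  I would invoke AP in the following form tailored to the statement: given MV-algebra homomorphisms ${m \colon A \to B}$ and ${f \colon A \to C}$ with $m$ injective, there exist an MV-algebra $D'$ and homomorphisms ${i \colon B \to D'}$, ${j \colon C \to D'}$ with ${im = jf}$ and $j$ injective.

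Granted AP in this form, the proof is immediate.  Consider a pushout square
\[
\xymatrix{
A \ar[r]^{m} \ar[d]_{f} & B \ar[d]^{f'} \\
C \ar[r]_{m'} & D
}
\]
with $m$ injective.  Produce $(D', i, j)$ via AP and use the universal property of the pushout to obtain a unique homomorphism ${u \colon D \to D'}$ with ${uf' = i}$ and ${um' = j}$.  Since ${um' = j}$ is injective, so is $m'$, as required.

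The bulk of the work therefore lies in establishing the asymmetric AP stated above.  I would proceed in two stages.  First, I would obtain the classical \emph{symmetric} AP for $\MV$ (both $m$ and $f$ injective yielding both $i$ and $j$ injective) by transporting the well-known amalgamation property for unital abelian lattice-ordered groups along Mundici's categorical equivalence; the ${\ell}$-group statement is essentially Pierce's theorem in the unital setting.  Second, to pass from the symmetric to the asymmetric form, I would factor ${f = \bar{f} \circ q}$ with ${q \colon A \twoheadrightarrow A/\ker f}$ the canonical surjection and $\bar{f}$ injective.  The pushout of $m$ along $q$ is computed by quotienting $B$ by the ideal generated by $m(\ker f)$, and its lower leg is injective thanks to the congruence extension property of $\MV$ (the ideal generated by $m(\ker f)$ in $B$ meets $m(A)$ in $m(\ker f)$).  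Composing with the symmetric AP applied to the resulting pair of injections delivers the required amalgam.

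The main obstacle is thus the combined use of AP and the congruence extension property for $\MV$.  Both are standard but non-trivial results in the MV/$\ell$-group literature, typically handled via Mundici's equivalence; a fully self-contained treatment would either cite \cite{CignoliEtAlBook} for these facts or reproduce Pierce's construction of amalgams of $\ell$-groups, which makes injectivity of both legs transparent at the $\ell$-group level.
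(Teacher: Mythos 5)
Your proof is correct and rests on exactly the same two ingredients as the paper's: the amalgamation property for $\MV$ (obtained from Pierce's theorem for Abelian $\ell$-groups via Mundici's equivalence) and the congruence extension property. The only difference is that you explicitly carry out the implication $\mathrm{AP}+\mathrm{CEP}\Rightarrow$ pushout-stability of monomorphisms (via the asymmetric amalgamation form and the image factorisation), whereas the paper simply cites the general equivalence of these conditions in algebraic categories due to Kiss, M\'arki, Pr\"ohle and Tholen.
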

\begin{proof}
It is well known \cite{KissEtAl82} that, in algebraic categories,  stability of monomorphisms under pushout is equivalent to the conjunction of the Amalgamation Property (AP) and of the Congruence Extension Property (CEP).
Pierce proved the AP for Abelian lattice-groups in \cite{Pierce76}, and Mundici \cite[Proposition 1.1]{Mundici88} observed that Pierce's result transfers through the functor $\Gamma$ to MV-algebras. For a different proof of the AP for Abelian lattice-groups and MV-algebras, see \cite[Theorems 36 40]{MetcalfeEtAl14}. The CEP for MV-algebras was proved in \cite[Proposition 8.2]{MundiciGispert}; for an alternative proof, see \cite[p.\ 230]{MundiciAdvanced}. For yet another proof in the more general context of residuated lattices, see \cite[Corollary 44]{MetcalfeEtAl14}. 
\end{proof}
Most of the work will be done on the algebraic side,  so it is convenient to start with an arbitrary 
category  $\calA$  with finite coproducts whose  initial object is denoted   $0$.
As suggested above, we  concentrate on the objects $A$ such that the unique map ${0 \rightarrow A}$ is epic. Notice that such an object is exactly a subterminal object in $\opCat{\calA}$, but we prefer to avoid introducing new terminology such as `cosubterminal' or `supra-initial'.
For  convenience we state here the dual of Lemma~\ref{LemCharSubterminals}.

\begin{lemma}\label{LemCharSubterminalsOP} For any object $A$ in $\calA$, the following are equivalent:
\begin{enumerate}
\item The map ${0 \rightarrow A}$ is epic. 
\item The codiagonal ${\nabla \colon A + A \rightarrow A}$ is an isomorphism.
\item The coproduct injections ${in_0 , in_1 \colon A \rightarrow A + A}$ are equal.
\end{enumerate} 
\end{lemma}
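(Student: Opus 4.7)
The plan is to observe that the statement is the categorical dual of Lemma \ref{LemCharSubterminals}. Since that lemma depends only on the existence of finite products (and not on extensivity, as the authors stress in the paragraph following its proof), its dual holds in any category with finite coproducts, and so applies to $\calA$. I would therefore dualise the three implications, reversing arrows and swapping terminal with initial, monic with epic, diagonal with codiagonal, projections with injections, and pullback with pushout.

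Concretely, for (1) $\Rightarrow$ (2) the key fact dual to the one used in the proof of Lemma \ref{LemCharSubterminals} is that a morphism $e\colon X \to Y$ is epic if and only if the square with $e$ on the top and left and identities on the bottom and right is a pushout. Applied to the epic $0 \to A$ this gives that
\[
\xymatrix{0 \ar[d] \ar[r] & A \ar[d]^-{id} \\ A \ar[r]_-{id} & A}
\]
is a pushout. But the pushout of the span $A \leftarrow 0 \to A$ is by construction $A + A$ with its coproduct injections; comparing the two universal cocones identifies $A + A$ with $A$ via the codiagonal $\nabla$, which is therefore an isomorphism.

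For (2) $\Rightarrow$ (3), from $\nabla \circ in_0 = id_A = \nabla \circ in_1$ (by the defining property of $\nabla$) cancellation of the isomorphism $\nabla$ gives $in_0 = in_1$. For (3) $\Rightarrow$ (1), given any two maps $f, g \colon A \to B$, the copairing $[f, g]\colon A + A \to B$ satisfies $f = [f, g] \circ in_0 = [f, g] \circ in_1 = g$; thus any two maps out of $A$ coincide, which (since maps out of $0$ are automatically unique) precisely says that $0 \to A$ is epic. No genuine obstacle is expected: the argument is a mechanical dualisation, and the only thing worth recording explicitly is the pushout characterisation of epimorphisms invoked in (1) $\Rightarrow$ (2).
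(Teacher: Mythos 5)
Your proposal is correct and matches the paper's intent exactly: the paper states Lemma~\ref{LemCharSubterminalsOP} without proof as ``the dual of Lemma~\ref{LemCharSubterminals}'', relying on the remark that the latter needs only finite products, and your argument is precisely the explicit dualisation of that proof (with the pushout characterisation of epimorphisms and the identification of the pushout of $A \leftarrow 0 \rightarrow A$ with $A+A$ both correctly invoked). Nothing is missing.
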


We shall also need a simple  auxiliary fact.

\begin{lemma}\label{LemSpecialQuotientsOfSubterminalAreSubterminalOP}
Let ${0\rightarrow A}$ be epic and ${m \colon B \rightarrow A}$ be a map.
If the coproduct map ${m + m \colon B + B \rightarrow A + A}$ is monic then ${0 \rightarrow B}$ is epic.
\end{lemma}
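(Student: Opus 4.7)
The plan is to invoke the characterisation of the property ``$0 \rightarrow A$ is epic'' provided by Lemma~\ref{LemCharSubterminalsOP} on both the hypothesis and the target, thereby reducing the statement to an elementary diagrammatic manipulation.

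More precisely, I would first rephrase the assumption ``$0 \rightarrow A$ is epic'' as the equality ${in_0^A = in_1^A \colon A \rightarrow A+A}$ of the two coproduct injections, and the conclusion ``$0 \rightarrow B$ is epic'' as the equality ${in_0^B = in_1^B \colon B \rightarrow B+B}$. This is immediate from items (1) and (3) of Lemma~\ref{LemCharSubterminalsOP}.

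Next, I would use the defining naturality of the coproduct functor: by construction of ${m + m \colon B+B \rightarrow A+A}$, the squares
\[\xymatrix{
B \ar[d]_-{m} \ar[r]^-{in_i^B} & B+B \ar[d]^-{m+m} \\
A \ar[r]_-{in_i^A} & A+A
}\]
commute for ${i \in \{0,1\}}$. Combining these commutations with the hypothesis ${in_0^A = in_1^A}$ yields
\[(m+m)\circ in_0^B \;=\; in_0^A \circ m \;=\; in_1^A \circ m \;=\; (m+m) \circ in_1^B.\]
Finally, since ${m+m}$ is monic by assumption, it can be cancelled on the left to deduce ${in_0^B = in_1^B}$, which by Lemma~\ref{LemCharSubterminalsOP} is precisely the desired assertion that ${0 \rightarrow B}$ is epic.

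There is no substantive obstacle: the argument is purely formal, relying only on the universal property of coproducts together with the already established Lemma~\ref{LemCharSubterminalsOP}. The only thing to be careful about is to keep straight the distinction between the coproduct injections into $A+A$ and those into $B+B$, and to correctly identify $m+m$ as the unique map induced by the universal property of $B+B$ applied to the cospan ${in_0^A \circ m, \, in_1^A \circ m}$.
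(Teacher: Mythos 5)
Your argument is correct and is essentially the paper's proof routed through item (3) of Lemma~\ref{LemCharSubterminalsOP} instead of item (2): the paper works with the naturality square for the codiagonals, notes that $\nabla_A$ is an isomorphism and $m+m$ is monic, and concludes that $\nabla_B$ is monic as well as split epic, hence an isomorphism. Both are the same purely formal use of the coproduct's universal property together with the monicity of $m+m$, so there is nothing to add.
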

\begin{proof}
The following square commutes
\[\xymatrix{
B + B \ar[d]_-{m + m} \ar[r]^-{\nabla} & B \ar[d]^-m \\
A + A \ar[r]_-{\nabla} & A
}\]
by naturality of the codiagonal. The bottom map is an isomorphism by Lemma~\ref{LemCharSubterminalsOP}, and the left vertical map is monic by hypothesis. So the top map is also monic, as well as  split epic.
\end{proof}

Assume from now on that $\calA$ has finite colimits and that monomorphisms are stable under pushout. We stress that this stability property is quite restrictive. For instance, it does not hold in $\Ring$. On the other hand, we already know that it holds  in $\MV$ by Lemma~\ref{LemMonosAreStableUnderPOinMV}.

\begin{lemma}\label{LemCharSubterminalsExtraOP} 
The map ${0 \rightarrow A}$ is epic
 if, and only if, for every monomorphism ${B \rightarrow A}$, ${0 \rightarrow B}$ is epic.
\end{lemma}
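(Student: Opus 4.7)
The plan is to prove the two implications separately, with the forward ("if") direction being almost immediate and the main content lying in showing that, when $0 \to A$ is epic and $m \colon B \to A$ is monic, the map $m + m \colon B + B \to A + A$ is monic, so that Lemma~\ref{LemSpecialQuotientsOfSubterminalAreSubterminalOP} applies.

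The easy direction: if every monomorphism into $A$ has initial-epic domain, then applying this to the identity $\id{A} \colon A \to A$ (a monomorphism) yields that ${0 \to A}$ is epic.

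For the substantive direction, assume $0 \to A$ is epic and $m \colon B \to A$ is monic. In view of Lemma~\ref{LemSpecialQuotientsOfSubterminalAreSubterminalOP}, it suffices to show that ${m + m \colon B + B \to A + A}$ is monic. I would factor
\[
m + m \;=\; (A + m)\circ (m + B) \colon B + B \longrightarrow A + B \longrightarrow A + A,
\]
and argue that each of the two factors is a pushout of $m$, hence monic by the hypothesis on $\calA$. Concretely, the square
\[\xymatrix{
B \ar[d]_-{m} \ar[r]^-{in_0} & B + B \ar[d]^-{m + B} \\
A \ar[r]_-{in_0} & A + B
}\]
is a pushout: a cocone on its corner span consists of maps $A \to Z$ and $B + B \to Z$ agreeing on $B$ (via $m$ on the $B\to A$ side and via the first injection on the $B \to B+B$ side); this is the same datum as a map $A \to Z$ together with a map $B \to Z$ (the ``second summand'' of $B+B \to Z$), which is exactly a map $A + B \to Z$. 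Stability of monos under pushout then gives that $m + B$ is monic. The analogous argument shows $A + m$ is a pushout of $m$, hence monic, and the composition $m + m$ is therefore monic. Applying Lemma~\ref{LemSpecialQuotientsOfSubterminalAreSubterminalOP} concludes the proof.

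The main (though mild) obstacle is the bookkeeping needed to see $m + m$ as a composite of two pushouts of $m$ itself, rather than merely as a coproduct of monomorphisms---coproducts of monomorphisms need not be monic in a general category with finite colimits, so one cannot appeal to that property directly and must instead exploit the pushout-stability hypothesis through the above factorisation.
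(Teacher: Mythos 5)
Your proof is correct and follows essentially the same route as the paper: the trivial direction via the identity monomorphism, and the converse by showing ${m + m}$ is monic and invoking Lemma~\ref{LemSpecialQuotientsOfSubterminalAreSubterminalOP}. The only difference is that the paper simply asserts that pushout-stability of monomorphisms implies finite coproducts of monomorphisms are monic, whereas you spell out the standard factorisation ${m + m = (A + m)(m + B)}$ that proves it; this is a welcome (and correct) elaboration, not a divergence.
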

\begin{proof}
One direction is trivial and does not need stability of monomorphisms.
For the converse observe that, as monomorphisms are stable under pushout, finite coproducts of monomorphisms are monic.
So we can apply  Lemma~\ref{LemSpecialQuotientsOfSubterminalAreSubterminalOP}.
\end{proof}

The following is a further  auxiliary fact.

\begin{lemma}\label{LemWeirdMoreGeneralOPnew}
For any ${d\colon A \rightarrow D}$ and ${e\colon B \rightarrow A}$ in $\calA$, if $e$ is epic and the composite  ${d e \colon B \rightarrow D}$ is monic then $d$ is an monic.
\end{lemma}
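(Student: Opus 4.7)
The plan is to prove that $d$ is monic by taking an arbitrary parallel pair ${f, g \colon X \to A}$ with ${df = dg}$ and deducing ${f = g}$. First, form the coequalizer ${q \colon A \to Q}$ of $f, g$, which exists since $\calA$ has finite colimits; the equation ${df = dg}$ induces a unique factorization ${d = d' q}$ for some ${d' \colon Q \to D}$. As $q$ is a regular epimorphism (being a coequalizer), it suffices to show that $q$ is also monic: then $q$ is an isomorphism and consequently ${f = g}$.

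From ${d = d'q}$ we have ${de = d'(qe)}$; since $de$ is monic by hypothesis, the standard mono-cancellation property forces ${qe \colon B \to Q}$ to be monic. Now form the pushout
\[\xymatrix{B \ar[d]_{qe} \ar[r]^e & A \ar[d]^\alpha \\ Q \ar[r]_\beta & P}.\]
By our standing hypothesis that monomorphisms in $\calA$ are stable under pushouts, the map $\alpha$ is monic; epimorphisms are always stable under pushouts, so $\beta$ is epic. The pair ${(q \colon A \to Q,\ \id{Q} \colon Q \to Q)}$ satisfies ${q \cdot e = \id{Q} \cdot (qe)}$, so the universal property of the pushout supplies a unique ${\gamma \colon P \to Q}$ with ${\gamma \alpha = q}$ and ${\gamma \beta = \id{Q}}$. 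Hence $\beta$ is split monic; combined with $\beta$ being epic, the identity ${\beta \gamma \beta = \beta = \id{P} \cdot \beta}$ yields ${\beta \gamma = \id{P}}$, so $\beta$ is an isomorphism with inverse $\gamma$.

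Consequently ${\alpha = \beta \gamma \alpha = \beta q}$. Since $\alpha$ is monic and $\beta$ is an isomorphism, $q$ must be monic, concluding the argument by the first paragraph. I expect the main obstacle will be spotting the correct pushout to form: pushing out $e$ against $qe$ is precisely what converts the mono-stability hypothesis into information about $q$ while simultaneously producing the retraction $\gamma$ that pins $\beta$ down to an isomorphism.
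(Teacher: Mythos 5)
Your proof is correct, but it takes a longer route than the paper's. The paper's argument is a two-square pasting: since $e$ is epic, the square with $e$ on two sides and identities on the other two is a pushout (the cokernel pair of an epimorphism is the identity pair); pasting this with the trivial pushout built from $d$ and identities exhibits $d$ as the pushout of the monomorphism $de$ along $e$, so $d$ is monic directly by the stability hypothesis. You instead reduce to showing that the coequalizer $q$ of an arbitrary pair equalized by $d$ is monic, and then establish this via the pushout of $e$ against $qe$, using mono-stability to get $\alpha$ monic and a splitting argument to show $\beta$ is an isomorphism. Every step checks out, and the splitting trick is a nice self-contained way to extract information about $q$ from the pushout. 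However, note that your coequalizer detour is redundant: the identical pushout-and-splitting argument applies verbatim with $d$ in place of $q$ (push out $e$ along $de$, obtain $\gamma$ from the cocone $(d,\mathrm{id}_D)$, conclude $\alpha=\beta d$ with $\alpha$ monic and $\beta$ invertible), which would shorten your proof to a single paragraph. Both approaches use only the standing hypotheses of the section (finite colimits and stability of monomorphisms under pushout), but the paper's pasting argument is the more economical of the two.
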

\begin{proof}
The right square below is trivially a pushout and, since ${e \colon B \rightarrow A}$ is epic, the left square is also a pushout
$$\xymatrix{
B \ar[d]_-{e} \ar[r]^-{e} & A \ar[d]^-{id} \ar[r]^-{d} & D \ar[d]^-{id} \\
A  \ar[r]_-{id} & A \ar[r]_-{d} & D
}$$
so the rectangle is  a pushout too. As the top composite is monic, and these are are stable under pushout by hypothesis, the bottom map is monic. 
\end{proof}

We emphasise the next particular case of Lemma~\ref{LemWeirdMoreGeneralOPnew}.

\begin{lemma}\label{LemWeirdMoreParticularOP} 
Let ${d \colon A \rightarrow D}$ be a regular epimorphism in $\calA$.
If ${0 \rightarrow A}$ is epic and ${0 \rightarrow D}$ is monic then $d$ is an isomorphism.
\end{lemma}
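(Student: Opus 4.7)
The plan is to combine Lemma~\ref{LemWeirdMoreGeneralOPnew} with the standard observation that a regular epimorphism that is also a monomorphism is an isomorphism.

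First I would exploit the initial object: since $0$ is initial there is a unique map $0 \rightarrow D$, and it factors necessarily as ${0 \rightarrow A \xrightarrow{d} D}$. By hypothesis the factor ${0 \rightarrow A}$ is epic and the composite ${0 \rightarrow D}$ is monic. Applying Lemma~\ref{LemWeirdMoreGeneralOPnew} with $e\colon 0 \rightarrow A$ yields at once that $d$ is monic. (This is the step that uses stability of monomorphisms under pushout, which is baked into Lemma~\ref{LemWeirdMoreGeneralOPnew}.)

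Second, I would observe that a regular epimorphism which is also monic is an isomorphism: if $d$ is the coequaliser of some parallel pair ${f, g \colon X \rightarrow A}$, then ${d f = d g}$ forces ${f = g}$ by monicity of $d$, so $d$ is the coequaliser of a pair of equal maps and hence isomorphic to $\id{A}$. Combining the two steps, $d$ is an isomorphism, as desired.

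There is no real obstacle here; the whole point is to package the two preceding lemmas into the form in which they will be used later. The only thing to double-check is the initial factorisation ${0 \rightarrow A \rightarrow D}$, which is immediate from the universal property of $0$.
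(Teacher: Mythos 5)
Your proof is correct and is exactly the argument the paper intends: it presents this lemma as "the particular case" of Lemma~\ref{LemWeirdMoreGeneralOPnew} obtained by taking $e$ to be the unique map $0 \rightarrow A$ (so that $de$ is the unique map $0 \rightarrow D$ by initiality), and then invoking the standard fact that a regular epimorphism which is also monic is an isomorphism. Nothing to add.
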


Assume now that our category $\calA$ with finite colimits and stable monomorphisms  has a terminal object $1$ such that for any object $A$ in $\calA$ the unique ${A \rightarrow 1}$ is a regular epimorphism.
This is common in  algebraic categories.

A {\em quotient} of $A$ in $\calA$ is  an equivalence class of regular epimorphisms with domain $A$, where two such are equivalent if they are isomorphic as objects of ${A/\calA}$.

An object $A$ is {\em simple} if it has exactly two quotients, namely, those represented by ${A \rightarrow 1}$ and ${id\colon A \rightarrow A}$.
So, if $\calA$ is an algebraic category, then an object is simple  if and only if  it has exactly two congruences.

To motivate the hypotheses of the following lemma observe that for every object $A$ in $\BA$, $A$ is terminal or ${0 \rightarrow A}$ is monic.
  Similarly for $\MV$ and for ${K/\Ring}$ with $K$ a field. In contrast, that is not the case in ${\Ring}$.

\begin{lemma}\label{LemSubterImpliesCosimpleOP}  
If for every object $D$ of $\calA$,  $D$ is terminal or ${0 \rightarrow D}$ is monic,  then for every epic ${0 \rightarrow A}$   the following hold.
\begin{enumerate}
\item  $A$ is simple or terminal.
\item If ${m \colon B \rightarrow A}$ is monic then ${B + B}$ is simple or  terminal.
\end{enumerate}
\end{lemma}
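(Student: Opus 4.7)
The plan is to deduce (1) directly from Lemma~\ref{LemWeirdMoreParticularOP}, and then to reduce (2) to (1) applied to the object ${B+B}$.

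For (1), I would let ${d\colon A \rightarrow D}$ represent an arbitrary quotient of $A$. The standing hypothesis on $\calA$ yields a dichotomy: either $D$ is terminal, in which case this quotient is represented by ${A \rightarrow 1}$; or ${0 \rightarrow D}$ is monic. In the second case, $d$ is a regular epimorphism, ${0 \rightarrow A}$ is epic by hypothesis, and ${0 \rightarrow D}$ is monic, so Lemma~\ref{LemWeirdMoreParticularOP} forces $d$ to be an isomorphism; the quotient is then represented by $\id{A}$. Consequently, $A$ has at most two quotients, namely those represented by ${A \rightarrow 1}$ and $\id{A}$, and these coincide precisely when ${A \rightarrow 1}$ is an isomorphism, that is, when $A$ is terminal. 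Hence $A$ is either terminal or has exactly two quotients and is therefore simple.

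For (2), Lemma~\ref{LemCharSubterminalsExtraOP} applied to the monomorphism ${m\colon B \rightarrow A}$ gives that ${0 \rightarrow B}$ is epic. I would next verify that ${0 \rightarrow B+B}$ is epic as well: given a parallel pair ${F, G\colon B+B \rightarrow X}$, the composites ${F \circ \inj_i}$ and ${G \circ \inj_i}$ are maps ${B \rightarrow X}$, and any two such maps must agree because ${0 \rightarrow B}$ is epic; by the universal property of the coproduct, ${F = G}$. Applying part (1) to ${B+B}$ then yields that ${B+B}$ is simple or terminal. No serious obstacle is anticipated, as the argument is a direct synthesis of the preceding lemmas; the only point to pay attention to is the closure of the class of objects $A$ with ${0 \rightarrow A}$ epic under binary coproducts, which is dispatched in the two-line argument above.
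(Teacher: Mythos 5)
Your proof is correct. Part (1) is exactly the paper's argument: the dichotomy supplied by the standing hypothesis, combined with Lemma~\ref{LemWeirdMoreParticularOP}, leaves only the two quotients represented by ${A \rightarrow 1}$ and the identity on $A$, whence $A$ is terminal or simple. Part (2) reaches the same intermediate goal as the paper---namely that ${0 \rightarrow B+B}$ is epic, so that part (1) applies to ${B+B}$---but by a slightly different route. The paper forms the monomorphism ${m+m \colon B+B \rightarrow A+A}$ (using that monomorphisms, being stable under pushout, are closed under finite coproducts), observes that ${0 = 0+0 \rightarrow A+A}$ is epic, and then applies Lemma~\ref{LemCharSubterminalsExtraOP} to ${m+m}$. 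You instead apply Lemma~\ref{LemCharSubterminalsExtraOP} to $m$ itself to obtain that ${0 \rightarrow B}$ is epic, and then check directly from the universal property of the coproduct that ${0 \rightarrow B+B}$ is epic; that check is valid, since ${0 \rightarrow B}$ epic means there is at most one map out of $B$ into any object, so two maps out of ${B+B}$ agree on both injections. Your variant is marginally more elementary at this step, as it avoids invoking closure of monomorphisms under finite coproducts; both versions rest on the same two lemmas and use the stability hypothesis only through Lemma~\ref{LemCharSubterminalsExtraOP}.
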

\begin{proof}
To prove the first item let ${d \colon A \rightarrow D}$ be a regular epimorphism. Then $D$ is terminal  or ${0 \rightarrow D}$ is monic by hypothesis.
If ${0 \rightarrow D}$ is monic then $d$ is an isomorphism by Lemma~\ref{LemWeirdMoreParticularOP}.
So the only possible quotients of $A$ are ${A \rightarrow 1}$ or ${id \colon A \rightarrow A}$. So $A$ is terminal or simple.

To prove the second item first recall that epimorphisms are closed under coproduct.
Then recall that, as monomorphisms are stable by hypotheses, they are closed under finite coproducts.
Therefore, ${m + m \colon B + B \rightarrow A + A}$ is a monomorphism 
and ${0 = 0 + 0 \rightarrow A + A}$ is epic. 
So, by Lemma~\ref{LemCharSubterminalsExtraOP}, ${0\rightarrow B + B}$ is also epic. The first item implies that ${ B + B}$ is simple or terminal.
\end{proof}

The material in this section applies to the case ${\calA = \MV}$, so we may now prove our first MV-algebraic result. For the proof we require   a standard  fact from the theory of MV-algebras and lattice-groups, which will also find further application  later in this paper.
An ideal $\m$ of the MV-algebra $A$ is \emph{maximal} if it is proper, and inclusion-maximal amongst proper ideals of $A$; equivalently, the quotient $A/\m$ is a simple algebra.

\begin{lemma}[H\"older's Theorem {\cite{Holder01}} for MV-algebras {\cite[3.5.1]{CignoliEtAlBook}}]\label{MVHolder}
For every MV-algebra $A$, and for every maximal ideal $\m$ of $A$,  there is  exactly one  homomorphism of MV-algebras \[\h_{\m}\colon\tfrac{A}{\m}\longrightarrow [0,1],\]
and this homomorphism is injective.
\end{lemma}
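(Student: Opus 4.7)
The plan is to reduce to the classical Hölder theorem for Archimedean totally ordered abelian groups via Mundici's categorical equivalence $\Gamma$ between MV-algebras and unital abelian lattice-groups. First, I would observe that because $\m$ is maximal, the quotient $A/\m$ is a simple MV-algebra: its only ideals are $\{0\}$ and $A/\m$ itself. I would then translate this under $\Gamma$: writing $A/\m \cong \Gamma(G,u)$ for a suitable unital $\ell$-group $(G,u)$, the bijective correspondence between MV-algebra ideals of $\Gamma(G,u)$ and convex $\ell$-subgroups of $G$ shows that $G$ has no proper nontrivial convex $\ell$-subgroup. Standard $\ell$-group arguments then force $G$ to be totally ordered and Archimedean.

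Next, I would invoke the classical Hölder theorem: every Archimedean totally ordered abelian group embeds into $(\R,+)$ as an ordered group, uniquely once one fixes the image of a chosen strictly positive element. Applying this with the strong order unit $u$ mapped to $1 \in \R$ yields a unique injective unital $\ell$-group morphism $(G,u) \hookrightarrow (\R,1)$. Applying $\Gamma$ gives an injective MV-algebra homomorphism $\h_{\m} \colon A/\m \hookrightarrow \Gamma(\R,1) = [0,1]$, which establishes existence and injectivity.

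For uniqueness, let $\phi \colon A/\m \to [0,1]$ be any MV-homomorphism. Its kernel is an ideal of the simple algebra $A/\m$, which must be either the whole algebra (impossible, since $\phi(1)=1\neq 0=\phi(0)$) or $\{0\}$; so $\phi$ is injective. Transporting $\phi$ back through $\Gamma$ yields a unital ordered group embedding of $G$ into $\R$, which by the uniqueness clause in Hölder's theorem must coincide with the embedding constructed above. Hence $\phi = \h_{\m}$.

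The main obstacle is the classical Hölder theorem itself, but it is a well-established foundational result from 1901; once granted, everything else is a routine transit through the $\Gamma$ equivalence together with the standard dictionary between MV-algebra ideals and convex $\ell$-subgroups containing no multiple of the unit. A purely MV-algebraic proof avoiding $\Gamma$ is also possible and is carried out in \cite[3.5.1]{CignoliEtAlBook}, working directly with the natural embedding of the prime spectrum into $[0,1]$ obtained from the total order on $A/\m$.
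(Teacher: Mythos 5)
Your argument is correct; note that the paper itself offers no proof of this lemma but imports it as a quoted classical result (citing H\"older 1901 and \cite[3.5.1]{CignoliEtAlBook}), and your reduction through Mundici's $\Gamma$ to the classical H\"older theorem for Archimedean totally ordered abelian groups is exactly the standard argument behind that citation. The only steps worth spelling out are that an abelian $\ell$-group with no proper nontrivial convex $\ell$-subgroup is totally ordered and Archimedean, and that the uniqueness clause of classical H\"older applies to every unital order-preserving homomorphism into $(\R,1)$, not only to embeddings (you sidestep the latter by first deriving injectivity of $\phi$ from simplicity, which works); both points are routine.
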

In connection with the result that follows, let us explicitly recall that the initial object $0$ in $\MV$ is the two-element Boolean algebra $\{0,1\}$.
\begin{theorem}\label{t:superseparableNew} For any  MV-algebra $A$ the following are equivalent.
\begin{enumerate}[\textup{(}i\textup{)}]
\item\label{i:new3} $A$ is a subalgebra of $[0,1]\cap\Q$.
\item $A$ is non-trivial and the unique map ${0 \rightarrow A}$ is  epic.
\item\label{i:new2} The unique map ${0 \rightarrow A}$ is monic and epic.
\item $A$ is simple and ${0 \rightarrow A}$ is  epic.
\end{enumerate}
\end{theorem}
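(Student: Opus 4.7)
The plan is to exploit the elementary equivalences amongst (ii), (iii), and (iv), and then carry out the substantive work of proving that (i) and (iv) are equivalent via Mundici's functor $\Gamma$ to unital Abelian lattice-groups; cf.\ Appendix~\ref{a:l-groups}.

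First, the easy part. Since the initial MV-algebra is $\{0,1\}$, the map $0 \to A$ is monic precisely when $A$ is non-trivial, whence (ii)$\iff$(iii). For (ii)$\Rightarrow$(iv), I would apply Lemma~\ref{LemSubterImpliesCosimpleOP}: its hypothesis holds in $\MV$ because any non-trivial MV-algebra $D$ satisfies $0\neq 1$, making $0\to D$ monic. Hence epicness of $0\to A$ together with non-triviality of $A$ forces $A$ to be simple. The converse (iv)$\Rightarrow$(ii) is immediate, since simple algebras are non-trivial by convention.

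For (i)$\Rightarrow$(iv): any subalgebra $A$ of $[0,1]\cap\Q$ is a totally-ordered Archimedean MV-chain, hence simple---any non-zero $\epsilon \in A$ satisfies $n\epsilon = 1$ in $A$ for sufficiently large $n$ by Archimedeanity of $\R$, so any proper ideal is trivial. To verify that $0\to A$ is epic, I would translate to the lattice-group side: $A$ corresponds to a subgroup $G_A\subseteq \Q$ with distinguished unit $1$. If $f,g\colon A\to B$ are two MV-homomorphisms and $(H,u)$ is the lattice-group corresponding to $B$, then for every $a=p/q\in G_A$ the equation $qa = p\cdot 1$ gives $qf(a)=p\cdot u=qg(a)$ in $H$; torsion-freeness of lattice-groups (as subdirect products of totally-ordered groups) forces $f(a)=g(a)$, whence $f=g$.

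The heart of the argument is (iv)$\Rightarrow$(i). By H\"older's Theorem~\ref{MVHolder}, the simple algebra $A$ embeds uniquely into $[0,1]$; I must show that every element of the image lies in $\Q$. Suppose for contradiction that the corresponding subgroup $G_A\subseteq\R$ contains an irrational $\alpha$. Since $1$ and $\alpha$ are $\Z$-linearly independent, $\Z+\alpha\Z$ is a free Abelian subgroup of $G_A$ of rank $2$, so the group homomorphism $\Z+\alpha\Z\to\Q$ sending $1\mapsto 0$ and $\alpha\mapsto 1$ extends, by injectivity of the divisible Abelian group $\Q$ as a $\Z$-module, to some $\varphi\colon G_A\to\Q$. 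Set $K = G_A\oplus_{\mathrm{lex}}\Q$ with the lexicographic order, where $(1,0)$ is a strong unit by Archimedeanity of $G_A\subseteq\R$. The assignments $a\mapsto(a,0)$ and $a\mapsto(a,\varphi(a))$ define two distinct unital lattice-group morphisms $(G_A,1)\to(K,(1,0))$: each preserves the unit because $\varphi(1)=0$, and each is order-preserving since in the lexicographic order any strict inequality in the first coordinate dominates the second. Transporting via $\Gamma$ yields two distinct MV-homomorphisms $A\to\Gamma(K,(1,0))$, contradicting epicness of $0\to A$. The main obstacle is securing the extension of $\varphi$ to all of $G_A$, which is exactly where injectivity of $\Q$ as a $\Z$-module is the key input.
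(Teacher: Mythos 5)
Your proof is correct, and while the elementary part coincides with the paper's (both get (ii)$\Leftrightarrow$(iii) from the fact that $0\to A$ is monic precisely when $A$ is non-trivial, and (ii)$\Rightarrow$(iv) from Lemma~\ref{LemSubterImpliesCosimpleOP}), the two substantive implications are handled by a genuinely different route. The paper outsources both to Mundici's explicit analysis of the coproduct $A+A$ for rational $A$: it quotes \cite[Proposition~7.2]{MundiciAdvanced} to see that the two injections $A\to A+A$ coincide, giving (i)$\Rightarrow$(ii) via Lemma~\ref{LemCharSubterminalsOP}; and for (iv)$\Rightarrow$(i) it applies H\"older, notes that $A_r+A_r$ is simple for the subalgebra $A_r$ generated by any $r\in A$ (Lemma~\ref{LemSubterImpliesCosimpleOP}), and then invokes the computation in \cite[Proposition~7.3]{MundiciAdvanced} to force $r\in\Q$. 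You avoid coproducts of MV-algebras altogether: your (i)$\Rightarrow$(epic) is a direct denominator argument ($q\tilde f(a)=p\cdot u=q\tilde g(a)$ plus torsion-freeness of Abelian $\ell$-groups), and your (iv)$\Rightarrow$(i) exhibits, from an irrational $\alpha\in G_A$, two distinct unital $\ell$-morphisms into the lexicographic product $G_A\oplus_{\mathrm{lex}}\Q$, the second twisted by a $\Z$-linear $\varphi$ with $\varphi(1)=0$, $\varphi(\alpha)=1$ obtained from injectivity of $\Q$ as a $\Z$-module. Both constructions check out: $(1,0)$ is a strong unit because $G_A\subseteq\R$ is Archimedean, the twisted map is order-preserving because the first lex coordinate dominates, and faithfulness of $\Gamma$ transports the two distinct $\ell$-morphisms to two distinct MV-morphisms out of $A$, killing epicness. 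What each approach buys: the paper's is shorter on the page but black-boxes the structure of $A+A$; yours is self-contained, makes the obstruction concrete (an irrational element admits a nontrivial ``infinitesimal deformation'' into a lex product), and only uses the first item of Lemma~\ref{LemSubterImpliesCosimpleOP}. The sole points worth spelling out in a final write-up are that an order-preserving group homomorphism between totally ordered groups is automatically a lattice homomorphism, and the standard identification $\Gamma(G_A,1)\cong A$ under H\"older's embedding.
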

\begin{proof}
If ${A \subseteq [0,1]\cap\Q}$ then $A$ is certainly non-trivial,  and  \cite[Proposition~{7.2}]{MundiciAdvanced} shows that the coproduct inclusions  
${in_0, in_1 \colon A \rightarrow A + A}$ are equal. 
So  ${0 \rightarrow A}$ is epic by Lemma~\ref{LemCharSubterminalsOP}.

The second and third items are clearly equivalent, and they imply the fourth by  Lemma~\ref{LemSubterImpliesCosimpleOP}.

Finally, assume that $A$ is simple and that ${0 \rightarrow A}$ is epic. 
By H\"older's Theorem (Lemma \ref{MVHolder}) together with simplicity,  there is exactly one monomorphism ${A\to [0,1]}$.
Now let ${r \in A}$ and write ${\iota \colon A_r \rightarrow A}$ for the subalgebra of $A$ generated by $r$.
As $A_r$ is not trivial (and ${0 \rightarrow A}$ is epic)  Lemma~\ref{LemSubterImpliesCosimpleOP} implies that ${ A_r + A_r}$ is simple. Hence, by the computation in \cite[Proposition 7.3]{MundiciAdvanced}, $r$ must be rational.
\end{proof}

\section{The $\pi_0$ functor for topological spaces}
\label{s:stonereflection}

In this section we show that the full inclusion ${\Stone \rightarrow \KHaus}$ of the category of Stone spaces into that of compact Hausdorff spaces has a left adjoint  ${\pi_0\colon \KHaus \to \Stone}$ that preserves set-indexed products. The result just stated may be concisely referenced as follows. That the inclusion at hand  is reflective is well known and flows readily from the universal property of the quotient topology. As shown in \cite[Section~7]{CJKP97}, the reflection has ``stable units''; we need not discuss this property here, except to recall  that it easily implies that the left adjoint $\pi_0$ preserves finite products. Since Gabriel and Ulmer in \cite[p.\ 67]{GabrielUlmer}  show that $\pi_0$ preserves cofiltered limits, $\pi_0$ preserves all products.\footnote{We are grateful to Luca Reggio and to Dirk Hofmann for pointing out to us, respectively, the relevance of \cite[Section~7]{CJKP97} and of \cite[p.\ 67]{GabrielUlmer}.}

We give  here a different proof that emphasises the key r\^{o}le of totally disconnected spaces in the general case. We first obtain a product-preserving left adjoint to the full inclusion of the category $\TD$ of totally disconnected topological spaces into $\Top$. 
We then show how to restrict this left adjoint to the categories of interest to us in the present paper. 

A topological space $X$ is \emph{connected} if it so in the sense of Definition~\ref{DefConnected}. A subset of a space is \emph{clopen} if it is both closed and open. Then, a space $X$ is connected  if and only if  it contains exactly two clopen sets, which are then necessarily $\emptyset$ and $X$. Equivalently \cite[Theorem 6.1.1]{Engelking}, $X$ is connected if whenever $X=A\cup B$ with $A\cap B=\emptyset$ and $A$ and $B$ closed subsets of $X$, then exactly one of $A$ and $B$ is empty. If $X$ is a space and $x\in X$, the \emph{component} of $x$ in $X$, written $C_x$ (with $X$ understood), is defined as
\[
C_x\coloneqq\bigcup \{C\seq X\mid x \in X \text{ and } C \text{ is connected}  \} \subseteq X.
\]
It can be shown that $C_x$ is a connected subspace of $X$ \cite[Corollary 6.1.10]{Engelking}, and it therefore is the inclusion-largest such to  which $x$ belongs. Also, $C_x$ is closed in $X$ \cite[Corollary 6.1.11]{Engelking}.  A topological space $X$ is \emph{totally disconnected} if for each $x\in X$ we have $C_x=\{x\}$.

 Consider the equivalence relation on $X$ given by
\begin{align}\label{eq:connectedrel}
x\sim y \text{ if, and only if, } C_x=C_y,
\end{align}
and define
\[
\pi_0X\coloneqq\frac{X}{\sim}.
\]
We equip $\pi_0X$ with the quotient topology, and call it the \emph{space of components} of $X$. We write
\begin{align}\label{eq:pi0unit}
q\colon X \longrightarrow \pi_0X
\end{align}
for the quotient map.
\begin{lemma}\label{l:respectingcomp} For every continuous map ${f\colon X\to Y}$ between topological spaces there is exactly one map such that the square  below commutes.
\[
\xymatrix{
X \ar[d]_-{f} \ar[r] & \ar[d]^-{\pi_0f} \pi_0X\\
Y\ar[r] & \pi_0Y
}
\]
\end{lemma}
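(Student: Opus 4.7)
The plan is to show that $f$ respects the equivalence relation $\sim$ defined in \eqref{eq:connectedrel}, so that the composite ${q \circ f\colon X \to \pi_0 Y}$ descends to a unique continuous map ${\pi_0 X \to \pi_0 Y}$ via the universal property of the quotient topology.

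First I would recall the standard fact \cite[Theorem 6.1.14]{Engelking} that the continuous image of a connected space is connected. Given ${x \in X}$, the set ${f(C_x) \seq Y}$ is then connected and contains ${f(x)}$, so ${f(C_x) \seq C_{f(x)}}$ by the maximality of connected components. Consequently, if ${x \sim x'}$, i.e.\ ${C_x = C_{x'}}$, then ${f(x')\in f(C_x) \seq C_{f(x)}}$, which forces ${C_{f(x')} = C_{f(x)}}$ and hence ${f(x)\sim f(x')}$.

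Therefore the composite ${q \circ f\colon X \to \pi_0 Y}$ is constant on $\sim$-equivalence classes. Since ${q\colon X \to \pi_0 X}$ is surjective, there is a unique function ${\pi_0 f\colon \pi_0 X \to \pi_0 Y}$ of underlying sets such that ${(\pi_0 f)\circ q = q \circ f}$. Continuity of $\pi_0 f$ follows from the universal property of the quotient topology on ${\pi_0 X}$: for any open ${U \seq \pi_0 Y}$, the preimage ${((\pi_0 f)\circ q)^{-1}(U) = f^{-1}(q^{-1}(U))}$ is open in $X$, so ${(\pi_0 f)^{-1}(U)}$ is open in ${\pi_0 X}$.

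Uniqueness of the continuous map $\pi_0 f$ making the square commute is immediate from the surjectivity (hence epimorphicity in $\Top$) of $q\colon X \to \pi_0 X$. There is no real obstacle here; the only point meriting care is the verification that $f$ respects components, which is exactly the preservation of connectedness by continuous maps.
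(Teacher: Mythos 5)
Your proof is correct and follows essentially the same route as the paper's: show that $f$ respects $\sim$ by using that continuous images of connected sets are connected (so the image of a component lands inside a single component of $Y$), then invoke the universal property of the quotient topology for existence, continuity, and uniqueness of $\pi_0 f$. No gaps.
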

\begin{proof}
We first show that ${f\colon X\to Y}$ preserves the equivalence relation $\sim$ in \eqref{eq:connectedrel}. Given $x,x' \in X$, suppose $x\sim x'$, so that $C_x=C_y \eqqcolon C$. Since continuous maps preserve connectedness \cite[Theorem 6.1.3]{Engelking},  $f[C]$ is a connected subset of $Y$ that contains both $fx$ and $fx'$. Hence $f[C]\seq C_{fx}\cap C_{fx'}$, which entails $C_{fx}=C_{fy}$. This completes the proof that $f$ preserves $\sim$. Existence and uniqueness of ${\pi_0 f}$ follow from the universal property  of the quotient $X \rightarrow \pi_0 X$.
\end{proof}
\noindent Lemma~\ref{l:respectingcomp} implies that  the assignment
that sends $f$ to $\pi_0f$ extends to an endofunctor 
\begin{align}\label{eq:endotop}
\pi_0\colon \Top \longrightarrow \Top.
\end{align}
This endofunctor determines the full subcategory $\TD$, as we now show.

\begin{lemma}\label{LemClaimForl:td} 
If ${C \subseteq \pi_0 X}$ is a connected subspace then so is ${q^{-1} [C] \subseteq X}$. 
\end{lemma}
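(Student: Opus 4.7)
My plan is a proof by contradiction. Suppose $C \seq \pi_0 X$ is connected but $q^{-1}[C] \seq X$ is not, so that $q^{-1}[C] = \tilde A \sqcup \tilde B$ for some non-empty, disjoint, relatively clopen subsets. The first step is to show that $\tilde A$ and $\tilde B$ are saturated with respect to $\sim$: each fibre of the restriction $q|_{q^{-1}[C]}$ is a connected component of $X$ (hence connected), and being covered by the two disjoint relatively open sets $\tilde A, \tilde B$, it must lie entirely in one of them. Setting $A \df q[\tilde A]$ and $B \df q[\tilde B]$, saturation yields $q^{-1}[A] = \tilde A$, $q^{-1}[B] = \tilde B$, $A \cap B = \emptyset$, and $A \cup B = C$.

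To contradict the connectedness of $C$ it suffices to exhibit $A$ and $B$ as both open in the subspace $C \seq \pi_0 X$. A natural candidate for an open set of $\pi_0 X$ meeting $C$ in $A$ is $q[V]$ with $V \df X \setminus \overline{\tilde B}$, the complement in $X$ of the closure of $\tilde B$. This $V$ is clearly open, and since $\tilde B$ is relatively closed in $q^{-1}[C]$ one has $\overline{\tilde B} \cap q^{-1}[C] = \tilde B$, so $V \cap q^{-1}[C] = \tilde A$ and hence $q[V] \cap C = A$. A symmetric candidate handles $B$.

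The pivotal step is to verify that $q[V]$ is actually open in $\pi_0 X$, which by the definition of the quotient topology on $\pi_0 X$ is equivalent to $V$ being saturated, i.e., to $\overline{\tilde B}$ being a union of components of $X$. For $x \in \tilde B$ the conclusion $C_x \seq \overline{\tilde B}$ is immediate by saturation of $\tilde B$; the delicate case is $x \in \overline{\tilde B} \setminus \tilde B$ (where $x$ lies outside $q^{-1}[C]$ entirely), which must be handled by invoking the connectedness of $C_x$ together with the fact that $\tilde B$ is itself a union of components. This is the main technical obstacle; once it is resolved, the same argument applied to $\overline{\tilde A}$ makes $B$ open in $C$, and the resulting disconnection of $C$ contradicts the hypothesis, completing the proof.
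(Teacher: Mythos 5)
Your first half is sound and coincides with the paper's: each fibre of $q$ over a point of $C$ is a component of $X$, hence connected, hence contained entirely in $\tilde A$ or in $\tilde B$; so both pieces are saturated, $q^{-1}[A]=\tilde A$, $q^{-1}[B]=\tilde B$, and $A\sqcup B=C$. The computation $q[V]\cap C=A$ for $V=X\setminus\overline{\tilde B}$ is also correct.

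But the step you defer is not a technicality to be filled in later --- it is the entire content of the lemma, and the route you sketch for it does not work. You need $\overline{\tilde B}$ to be saturated, i.e.\ a union of components. For $x\in\overline{\tilde B}\setminus\tilde B$ you propose to invoke ``the connectedness of $C_x$ together with the fact that $\tilde B$ is a union of components''; but all this yields is that $C_x\cap\overline{\tilde B}$ is a nonempty \emph{relatively closed} subset of $C_x$, whereas connectedness only forbids partitions into two relatively \emph{clopen} pieces. There is no reason for $C_x\cap\overline{\tilde B}$ to be relatively open in $C_x$, so no contradiction is available and you cannot conclude $C_x\subseteq\overline{\tilde B}$. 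Worse, the underlying claim that the closure of a union of components is a union of components is simply false: in $X=(\{0\}\times[0,1])\cup\bigcup_{n\geq 1}(\{1/n\}\times[0,1/2])\subseteq\R^2$, the set $\tilde B=\bigcup_{n\geq 1}(\{1/n\}\times[0,1/2])$ is a union of components, yet $\overline{\tilde B}$ contains $(0,0)$ but not the whole component $\{0\}\times[0,1]$ of that point. Any proof of saturation of $\overline{\tilde B}$ would therefore have to use the connectedness of $C$ --- the very thing you are trying to exploit --- so the argument is circular as it stands.

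The paper sidesteps closures entirely by working on the closed side of the quotient topology: writing $q^{-1}[C]=F_1\cup F_2$ with the $F_i$ disjoint and closed, it deduces (exactly as you did) that $F_i=q^{-1}[S_i]$ with $S_1\sqcup S_2=C$, and then observes that each $S_i$ is closed in $\pi_0X$ \emph{by the very definition of the quotient topology}, since its preimage $F_i$ is closed and saturated; connectedness of $C$ then forces one $S_i$, hence one $F_i$, to be empty. No saturation of a closure is ever needed. I recommend you rebuild the second half of your argument along these lines.
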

\begin{proof}
Let $q^{-1}[C]=F_1\cup F_2$ with $F_1$ and $F_2$ disjoint closed subsets of $X$. For any $y \in C$ we can write the fibre $q^{-1}[\{y\}]$ as $C_x$ for any   $x\in q^{-1}[\{y\}]$. Further, we can express $C_x$ as the disjoint union
$C_x=(F_1\cap C_x)\cup (F_2\cap C_x)$. And $C_x$ is closed and connected, because it is a component. Hence exactly one of $q^{-1}[\{y\}]=C_x\seq F_1$ or $q^{-1}[\{y\}]=C_x\seq F_2$ holds, for each $y \in C$. We can then define
\[
S_i\coloneqq\{ y \in C\mid q^{-1}[\{y\}] \seq F_i\}, \ i=1,2,
\]
to the effect that $C=S_1\cup S_2$ and $S_1\cap S_2 =\emptyset$. By construction we have $F_i=q^{-1}[S_i]$, $i=1,2$. The definition of quotient topology then entails that $S_i$ is closed because $F_i$ is. Since $C$ is connected, exactly one of $S_1$ and $S_2$ is empty, and hence so is exactly one of $F_1$ and $F_2$. 
\end{proof}

\begin{lemma}\label{l:adjointnessofpi0} For any space $X$,  the quotient map $q \colon X \rightarrow \pi_0X$ in \eqref{eq:pi0unit} is universal from
$X$ to the full inclusion ${\TD \rightarrow \Top}$. 
\end{lemma}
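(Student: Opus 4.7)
The plan is to verify the two ingredients of the universal property: (a) that $\pi_0X$ is actually totally disconnected, and (b) that any continuous map from $X$ into a totally disconnected space factors uniquely through $q$.

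For (a), I would take an arbitrary component $C$ of $\pi_0X$, appeal to Lemma~\ref{LemClaimForl:td} to conclude that $q^{-1}[C] \subseteq X$ is connected, and then observe that a connected subspace of $X$ is entirely contained in one component of $X$, hence inside a single $\sim$-equivalence class. Therefore $q^{-1}[C]$ lies in a single fibre of $q$, so $C$ is a singleton. This shows every component of $\pi_0X$ is a point, i.e.\ $\pi_0X \in \TD$.

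For (b), given a continuous $f\colon X \to Y$ with $Y$ totally disconnected, I would argue that $f$ is constant on each component $C_x$ of $X$. Indeed, $f[C_x]$ is a continuous image of a connected space, hence connected in $Y$; since $Y$ is totally disconnected, the only connected subsets are singletons, so $f[C_x]=\{fx\}$. This means $f$ is invariant under $\sim$, so it factors set-theoretically through $q$. The universal property of the quotient topology on $\pi_0X$ then yields a unique continuous $g\colon \pi_0X \to Y$ with $gq=f$; uniqueness at the level of maps is automatic because $q$ is surjective.

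The only genuinely substantive step is (a), and its work has already been done in Lemma~\ref{LemClaimForl:td}; everything else is a direct application of standard facts about connectedness (continuous images of connected spaces are connected) and the universal property of quotient topologies. No significant obstacle is anticipated.
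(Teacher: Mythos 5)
Your proposal is correct and follows essentially the same route as the paper: part (a) is exactly the paper's argument via Lemma~\ref{LemClaimForl:td} (with the implicit use of surjectivity of $q$ to pass from ``$q^{-1}[C]$ lies in one fibre'' to ``$C$ is a singleton''), and part (b) is the same factorisation through the quotient, differing only in that you re-derive constancy of $f$ on components directly rather than citing the $\sim$-preservation already established in Lemma~\ref{l:respectingcomp}.
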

\begin{proof}
We first show that ${\pi_0 X}$  is totally disconnected.
Let $C_y$ be the component of ${y \in \pi_0 X}$, with the intent of showing it is a singleton. 
By Lemma~\ref{LemClaimForl:td}, since $C_y$ is connected in ${\pi_0 X}$, so is $q^{-1}[C_y]$  connected in $X$. Therefore $q^{-1}[C_y]$ is contained in the component $C_x$ of any $x\in X$ with $x\in q^{-1}[C_y]$; and thus, the direct image $q[q^{-1}[C_y]]$ is contained in $q[C_x]=\{y\}$. Since  $q[q^{-1}[C_y]]=C_y$, because $q$ is surjective, we conclude $C_y\seq \{y\}$, as was to be shown.

Let ${f\colon X\to Y}$ be a continuous map, with $Y$ totally disconnected.
We already know from the proof of Lemma~\ref{l:respectingcomp} that $f$ preserves $\sim$ so,
as $Y$ is totally disconnected, ${x \sim x'}$ in $X$ implies ${f x = f x'}$ in $Y$.
The universal property of the quotient ${q \colon X \rightarrow \pi_0 X}$ implies the existence of a unique ${g \colon \pi_0 X \rightarrow Y}$ such that ${g q = f}$.
\end{proof}
\noindent We conclude that the full inclusion ${\TD \rightarrow \Top}$ has a left adjoint that, with no risk of confusion, will again be denoted by ${\pi_0 \colon \Top \rightarrow \TD}$.

\begin{prop}\label{PropEngelking}
The functor ${\pi_0 \colon \Top\rightarrow \TD}$ preserves all set-indexed products.
\end{prop}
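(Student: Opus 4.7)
The plan is to construct the canonical comparison map $\phi \colon \pi_0\bigl(\prod_{i\in I} X_i\bigr) \to \prod_{i\in I} \pi_0(X_i)$ and verify that it is a homeomorphism. First observe that a product of totally disconnected spaces is totally disconnected, because the component of a tuple projects to a singleton on each coordinate; hence $\prod_i \pi_0(X_i)$ lies in $\TD$ and its product there coincides with its product in $\Top$. The continuous maps $\prod_i X_i \to X_j \to \pi_0(X_j)$ factor through $\pi_0(\prod_i X_i)$ by the reflection established in Lemma~\ref{l:adjointnessofpi0}, and these factorisations assemble via the universal property of the product into the continuous $\phi$.

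The bijectivity of $\phi$ on underlying sets reduces to the classical fact that the component of $(x_i) \in \prod_i X_i$ is exactly $\prod_i C_{x_i}$; this is because $\prod_i C_{x_i}$ is connected as a product of connected spaces, it contains $(x_i)$, and any connected subset of $\prod_i X_i$ through $(x_i)$ projects coordinate-wise into $C_{x_j}$ and so is contained in $\prod_i C_{x_i}$.

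The crux is continuity of $\phi^{-1}$, which I would verify on a subbasis of the product topology. Let $\prod_i V_i$ be a basic open set with $V_i$ open in $\pi_0(X_i)$ and $V_i = \pi_0(X_i)$ for $i$ outside some finite set $F$. Writing $q \colon \prod_i X_i \to \pi_0(\prod_i X_i)$ and $q_i \colon X_i \to \pi_0(X_i)$ for the quotient maps, a direct unwinding of the definitions gives
\[
\phi^{-1}\bigl(\textstyle\prod_i V_i\bigr) = q\bigl(\textstyle\prod_i q_i^{-1}(V_i)\bigr).
\]
Now $\prod_i q_i^{-1}(V_i)$ is open in $\prod_i X_i$, since each $q_i^{-1}(V_i)$ is open in $X_i$ by definition of the quotient topology on $\pi_0(X_i)$ and equals $X_i$ for $i \notin F$; moreover it is saturated for the component relation on $\prod_i X_i$, because each $q_i^{-1}(V_i)$ is a union of components of $X_i$ and, by the previous paragraph, a product of such is a union of components of $\prod_i X_i$. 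Therefore $q^{-1}\bigl(q\bigl(\prod_i q_i^{-1}(V_i)\bigr)\bigr) = \prod_i q_i^{-1}(V_i)$, which is open; by the defining property of the quotient topology, $\phi^{-1}\bigl(\prod_i V_i\bigr)$ is open in $\pi_0(\prod_i X_i)$.

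The main subtlety, and what one might expect as the main obstacle, is that, in general, an open subset of $\prod_i X_i$ that happens to be saturated for the component relation need not be a product of saturated opens of the factors, so there is no a priori reason the comparison should be a homeomorphism. What rescues the argument is the cofiniteness condition in the product topology: one only needs to lift the \emph{basic} opens of $\prod_i \pi_0(X_i)$, and for these the preimage is a \emph{basic} open in $\prod_i X_i$ — automatically open — so no local connectedness hypothesis on the factors is required.
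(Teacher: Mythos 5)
Your first two paragraphs match the paper's own argument: the comparison map is obtained from the universal properties exactly as in the paper, and bijectivity rests on the identification of the component of $(x_i)$ in $\prod_i X_i$ with $\prod_i C_{x_i}$, which the paper quotes from Engelking and you prove directly. Up to that point the proposal is sound; in fact the paper's printed proof of Proposition~\ref{PropEngelking} goes no further than establishing that the comparison map is a continuous bijection.

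The gap is in your third paragraph, at the very step you call the crux. With $\phi \colon \pi_0\bigl(\prod_i X_i\bigr) \to \prod_i \pi_0 X_i$, the set $\phi^{-1}\bigl(\prod_i V_i\bigr)$ is the \emph{preimage under $\phi$} of a basic open of the codomain, and showing it is open in the quotient topology is exactly the statement that $\phi$ is continuous --- which your first paragraph already guaranteed, since $\phi$ was assembled from continuous maps by universal properties. So the paragraph proves nothing new. Continuity of the inverse is the assertion that $\phi$ is an \emph{open} map: for every open $W$ of $\pi_0\bigl(\prod_i X_i\bigr)$, i.e.\ for every $W = q[U]$ with $U$ an arbitrary open subset of $\prod_i X_i$ that is a union of components, the image $\phi[W] = \bigl(\prod_i q_i\bigr)[U]$ must be open in the product topology; equivalently, $\prod_i q_i$ must be a quotient map. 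In that direction no reduction to basic opens is available, because the opens of the quotient topology are indexed by arbitrary saturated opens of $\prod_i X_i$, not by boxes. This is precisely the ``main subtlety'' you name in your last paragraph --- a saturated open of the product need not be a union of saturated boxes --- and the cofiniteness of the product topology does not dispose of it, since cofiniteness only helps in the direction you have already handled. To close the gap one must show that every point of such a $U$ admits a saturated open box inside $U$ around some representative of its component, which is a genuine additional argument (and is where compactness enters when one restricts to $\KHaus$, cf.\ Lemma~\ref{LemXcompactSamePi0}).
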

\begin{proof}
Consider  a family ${(X_s \mid s \in S)}$ of spaces in $\Top$ indexed by a set $S$ and let 
\[
{\gamma \colon \pi_0 \prod_{s\in S} X_s \longrightarrow \prod_{s\in S} \pi_0 X_s}
\]
be the unique map such that the  triangle below commutes
\[\xymatrix{
 \pi_0 \left( \prod_{s\in S} X_s\right) \ar[rd]_-{\pi_0 \pr_s} \ar[r]^-{\gamma} & \prod_{s\in S} \pi_0 X_s \ar[d]^-{\pr_s} \\
 & \pi_0 X_s 
}\]
for every ${s \in S}$. 
In other words,
${\gamma ( C( x_s \mid s\in S )) = (C x_s \mid s\in S) \in \prod_{s\in S} \pi_0 X_s }$
for any ${( x_s \mid s\in S )}$ in ${\prod_{s\in S} X_s}$.

To prove that $\gamma$ is injective assume that 
${\gamma ( q ( x_s \mid s\in S )) =\gamma ( q ( y_s \mid s\in S ))}$ in ${\prod_{s\in S} \pi_0 X_s}$. 
That is, ${q x_s = q y_s}$ in ${\pi_0 X_s}$ for every ${s \in S}$.
 By \cite[Theorem~{6.1.21}]{Engelking} we have 
${q ( x_s \mid s\in S ) = q ( y_s \mid s\in S )}$ in ${\pi_0 \left( \prod_{s\in S} X_s\right)}$, so $\gamma$ is injective.

To prove that $\gamma$ is surjective observe that the following diagram commutes
$$\xymatrix{
 & \ar[ld]_-{q} \prod_{s\in S} X_s  \ar[d]^-{\prod_{s\in S} q} \ar[r]^-{\pr_s} & X_s \ar[d]^-{q}  \\
\pi_0 \left( \prod_{s\in S} X_s\right)   \ar[r]_-{\gamma} & \prod_{s\in S} \pi_0 X_s  \ar[r]_-{\pr_s} & \pi_0 X_s
}$$
 for every ${s\in S}$, so the inner triangle commutes.
As products of  surjections are surjective, the inner vertical map is surjective and hence so is $\gamma$, the bottom map of the triangle.
\end{proof}

We next identify a related construction which will provide a useful alternative description of $\pi_0$ when restricted to $\KHaus$.
Let us write $\C{(X,\2)}$ for the set of continuous maps from the space $X$ to the discrete two-point space $\2\coloneqq\{0,1\}$. There is a canonical continuous function
\begin{align}
E = \langle f\mid f \in \C{(X,\2)}\rangle\colon &X\longrightarrow \2^{\C{(X,\2)}},\label{eq:E}\\
&x\longmapsto ( f x \mid f\in \C{(X,\2}) ).\nonumber
\end{align}
For any subset $S\seq X$, write $\chi_S\colon X\to \2$ for the characteristic function defined by $\chi_S x=1$ if, and only if,  $x\in S$.
Then $S$ is clopen precisely when $\chi_S\in \C{(X,\2)}$. Thus,  $E$ in \eqref{eq:E}
 can equivalently be described as the function that sends each point ${x \in X}$ to the set of clopen subsets of $X$ that contain $x$. 

In order to prove the next lemma recall \cite[p.\ 356]{Engelking} that the \emph{quasi-component} of $x \in X$ is defined as
 \[
 \widetilde{C}_x\coloneqq \bigcap\{S\seq X\mid S \text{ is clopen, and } x \in S\}.
 \]
 It is clear that the quasi-components of a space $X$ partition $X$ into closed non-empty sets.
The relation between $E$ and quasi-components may be stated as follows.

\begin{lemma}\label{LemEandQC} 
For any ${x, x' \in X}$, ${E x = E x'}$ if and only if ${\widetilde{C}_x = \widetilde{C}_{x'}}$.
\end{lemma}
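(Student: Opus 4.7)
The plan is to unwind both sides of the claimed equivalence into the same characterisation, namely that $x$ and $x'$ lie in exactly the same clopen subsets of $X$.

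First I would translate the condition $Ex=Ex'$. By the definition of $E$, $Ex=Ex'$ means that $fx=fx'$ for every continuous $f\colon X\to\2$. Since the continuous maps $X\to\2$ are precisely the characteristic functions $\chi_S$ of clopen subsets $S\seq X$ (noted right above the lemma), this is equivalent to the statement: for every clopen $S\seq X$, $x\in S$ if and only if $x'\in S$. Call this condition $(\star)$.

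Next I would show $(\star) \Leftrightarrow \widetilde{C}_x=\widetilde{C}_{x'}$. If $(\star)$ holds, then the family $\{S \text{ clopen}\mid x\in S\}$ coincides with $\{S \text{ clopen}\mid x'\in S\}$, and taking intersections yields $\widetilde{C}_x=\widetilde{C}_{x'}$. Conversely, assume $\widetilde{C}_x=\widetilde{C}_{x'}$ and let $S$ be a clopen subset of $X$ with $x\in S$; by definition of the quasi-component, $\widetilde{C}_x\seq S$, and since $x'\in \widetilde{C}_{x'}=\widetilde{C}_x\seq S$, we get $x'\in S$. The symmetric argument (swapping the r\^{o}les of $x$ and $x'$) gives the other implication, and $(\star)$ follows.

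There is no real obstacle to the argument; the only minor point to keep straight is the trivial but crucial observation that $x\in\widetilde{C}_x$, which is what allows the reverse direction to be carried out cleanly.
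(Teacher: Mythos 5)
Your proof is correct and follows essentially the same route as the paper's: the forward direction is the observation that equal families of clopens have equal intersections (which the paper dismisses as ``clearly''), and your converse argument --- take a clopen $S$ containing $x$, note $\widetilde{C}_x\seq S$, and conclude $x'\in\widetilde{C}_{x'}=\widetilde{C}_x\seq S$ --- is word for word the paper's. The only difference is that you make explicit the translation of $Ex=Ex'$ via characteristic functions of clopens, which the paper leaves implicit.
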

\begin{proof}
If ${E x = E x'}$ then clearly ${\widetilde{C}_x = \widetilde{C}_{x'}}$.
For the converse assume that ${\widetilde{C}_x = \widetilde{C}_{x'}}$ and let ${S \subseteq X}$ be a clopen containing ${x}$. Then ${x' \in \widetilde{C}_{x'} = \widetilde{C}_x \subseteq S}$.
That is, ${x' \in S}$.
\end{proof}

The reader should beware that the quasi-component $\widetilde{C}_x$ of $x\in X$ in general fails to be connected. Indeed, the inclusion $C_x\seq \widetilde{C}_x$ always holds for each $x\in X$ \cite[Theorem 6.1.22]{Engelking}, and may be proper \cite[Example 6.1.24]{Engelking}. However:

\begin{lemma}\label{LemComparisonBetweenCandQC}
For any $X$ there exists a unique ${E' \colon \pi_0 X \rightarrow \2^{\C{(X,\2)}}}$ such that the following diagram
$$\xymatrix{
X \ar@(d,l)[rd]_-{E} \ar[r]^-q & \pi_0 X \ar[d]^-{E'} \\
 & \2^{\C{(X,\2)}}
}$$
commutes.
\end{lemma}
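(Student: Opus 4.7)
The plan is to invoke the universal property of the quotient map ${q \colon X \rightarrow \pi_0 X}$. Since $q$ is a topological quotient, existence and uniqueness of a continuous ${E'}$ with ${E' \circ q = E}$ reduce to showing that $E$ is constant on each $\sim$-equivalence class, i.e.\ that ${C_x = C_{x'}}$ implies ${E x = E x'}$. (Continuity of $E$ itself is immediate from the universal property of the product ${\2^{\C(X,\2)}}$, each component ${f \colon X \rightarrow \2}$ being continuous by construction.)

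To establish the factorization property, fix ${x, x' \in X}$ with ${x \sim x'}$, so that ${x,x' \in C_x = C_{x'}}$. Given any continuous ${f \colon X \rightarrow \2}$, the preimage ${S \coloneqq f^{-1}[\{1\}]}$ is clopen in $X$. Then ${S \cap C_x}$ is a clopen subset of the connected subspace $C_x$, hence either empty or equal to $C_x$; in either case, ${x \in S \iff x' \in S}$, i.e.\ ${f x = f x'}$. Since this holds for every ${f \in \C(X,\2)}$, we conclude ${E x = E x'}$, as required. (Alternatively, one may observe that ${C_x \seq \widetilde{C}_x}$ implies that the quasi-components of $x$ and $x'$ coincide, and appeal to Lemma~\ref{LemEandQC}.)

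Therefore $E$ descends along $q$: by the universal property of the quotient topology, there exists a unique continuous map ${E' \colon \pi_0 X \rightarrow \2^{\C(X,\2)}}$ with ${E' \circ q = E}$. Uniqueness, which is the only assertion the lemma explicitly states, also follows directly from the fact that $q$ is (surjective, hence) epic in $\Top$.

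There is no real obstacle here; the content of the proof is the short argument that clopen subsets cannot separate points lying in a common connected component, which is exactly what is needed to make $E$ factor through $\pi_0 X$.
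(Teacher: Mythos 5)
Your proposal is correct and follows essentially the same route as the paper: show that $E$ is constant on $\sim$-classes and then factor through the quotient $q$. The paper verifies the constancy via quasi-components and Lemma~\ref{LemEandQC} (the alternative you mention parenthetically), while your primary argument---that a clopen set cannot separate two points of a connected component---is an equally valid, slightly more direct justification of the same step.
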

\begin{proof}
Let ${x, x' \in X}$ be such that ${x \sim x'}$; that is, ${C_x = C_{x'}}$.
Then 
\[ x \in C_x \cap C_{x'} \subseteq  \widetilde{C}_x \cap  \widetilde{C}_{x'} \]
 so, as quasi-components are equal or disjoint, ${\widetilde{C}_x =  \widetilde{C}_{x'}}$.
That is, ${E x = E x'}$ by Lemma~\ref{LemEandQC}.
\end{proof}

Let 
${\xymatrix{
X \ar[r]^-{D} & \pi'_0 X \ar[r]^-{m} & \2^{\C{(X,\2)}}
}}$
be the epi/regular-mono factorization of the canonical map $E$ in \eqref{eq:E}. Then the following square commutes
\[
\xymatrix{
X \ar[d]_-{D} \ar[r]^-{q} & \ar[d] \pi_0X \ar@{.>}[ld]|-{c} \ar[d]^-{E'} \\
\pi'_0X\ar[r]_-m & \2^{\C{(X,\2)}}
}\]
by Lemma~\ref{LemComparisonBetweenCandQC} and, as $q$ is regular-epi and $m$ is monic,   there is  exactly one continuous map ${c\colon\pi_0(X)\to\pi'_0(X)}$ making the inner-triangles commute.
Since $D$ is epic, so is $c$.
Also, since $m$ is a regular mono,  $\pi_0'X$ carries the subspace topology inherited from the product $\2^{\C{(X,\2)}}$ and, as the latter is a Stone space, ${\pi_0'X}$ is Hausdorff.

\begin{lemma}\label{LemXcompactSamePi0} If $X$ is compact Hausdorff then ${c \colon \pi_0 X \rightarrow \pi_0' X}$ is an isomorphism and these isomorphic spaces are Stone spaces.
\end{lemma}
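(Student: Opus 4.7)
The plan is to exploit the classical fact that in a compact Hausdorff space, components and quasi-components coincide (see e.g.\ \cite[Theorem~6.1.23]{Engelking}). Together with Lemma~\ref{LemEandQC}, this will force the fibres of $D$ to agree with those of $q$, and standard compactness arguments will then turn the continuous bijection $c$ into a homeomorphism onto a Stone space.

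First I would record that $\pi_0 X$ is compact, being a quotient of the compact space $X$, and that $\pi_0' X$ is Hausdorff, being a subspace of the Hausdorff space $\2^{\C(X,\2)}$. Next I would establish injectivity of $c$. Suppose $c(q x) = c(q x')$; since $m$ is (regular) monic, it is injective, and so $E x = m D x = m D x' = E x'$. By Lemma~\ref{LemEandQC} this gives $\widetilde C_x = \widetilde C_{x'}$. Using the key compact-Hausdorff fact that $C_y = \widetilde C_y$ for every $y \in X$, we obtain $C_x = C_{x'}$, that is, $x \sim x'$, and hence $qx = qx'$. Combined with surjectivity of $c$ (already known, since $D$ is epic) this shows $c$ is a continuous bijection from a compact space onto a Hausdorff space, and therefore a homeomorphism.

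For the Stone property, I would observe that $\pi_0' X$ is a subspace of $\2^{\C(X,\2)}$. Zero-dimensionality is hereditary, and the compactness of $\pi_0' X$ (transported from $\pi_0 X$ through the homeomorphism $c$) together with the Hausdorff property makes $\pi_0' X$ a compact Hausdorff zero-dimensional space, i.e.\ a Stone space; and $\pi_0 X$ is Stone because it is homeomorphic to $\pi_0' X$.

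The main obstacle is not technical but conceptual: one needs to know, or separately verify, the Engelking-style result that $C_x = \widetilde C_x$ in compact Hausdorff spaces. Without this, $c$ might fail to be injective (as shown by the quoted counterexample in \cite[Example~6.1.24]{Engelking} in the non-compact setting). Everything else reduces to the familiar principle that a continuous bijection from a compact space to a Hausdorff space is a homeomorphism.
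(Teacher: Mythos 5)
Your proposal is correct and follows essentially the same route as the paper: both arguments rest on the compact-Hausdorff identity $C_x=\widetilde{C}_x$ from \cite[Theorem~6.1.23]{Engelking} to get bijectivity of $c$, and on the Closed Map Lemma (continuous bijection from a compact space to a Hausdorff space) to upgrade it to a homeomorphism. The only cosmetic difference is in the last step: the paper deduces the Stone property by noting that $m$ is closed, so $\pi_0'X$ is a closed subspace of the Stone space $\2^{\C(X,\2)}$, whereas you invoke hereditariness of zero-dimensionality plus transported compactness --- the two are interchangeable.
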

\begin{proof}
First recall \cite[Theorem~{6.1.23}]{Engelking} that, in any compact Hausdorff space $X$, the equality $C_x=\widetilde{C}_x$ holds for each $x\in X$.
In other words, in this case, the function ${\pi_0 X \rightarrow \pi_0' X}$ is bijective.
Also, since $X$ is compact, so is ${\pi_0 X}$ because $q$ is surjective.
Hence, as we already know that ${\pi_0' X}$ is Hausdorff, the Closed Map Lemma implies that $c$ is an isomorphism.

Similarly, compactness of $X$ implies compactness of ${\pi_0' X}$ and hence, the Closed Map Lemma implies that $m$ is closed. Therefore,  ${\pi_0'X}$ is a closed subspace of the Stone space  $\2^{\C{(X,\2)}}$.
\end{proof}

It is classical that each Stone space is totally disconnected, so there is a full inclusion $\Stone\to\TD$ such that the following diagram
$$\xymatrix{
\Stone \ar[d] \ar[r] & \TD \ar[d] \\
\KHaus \ar[r] & \Top
}$$
commutes. Lemma~\ref{LemXcompactSamePi0} implies that the composite 
${\xymatrix{\KHaus \ar[r] & \Top \ar[r]^-{\pi_0} & \TD}}$
 factors  through the full inclusion ${\Stone \rightarrow \TD}$.
The factorization will be  conveniently denoted by ${\pi_0 \colon \KHaus \rightarrow \Stone}$.

\begin{theorem}\label{t:dualpierce} The functor $\pi_0\colon \KHaus \to \Stone$ is left adjoint to  the full inclusion $\Stone \to \KHaus$, and preserves all set-indexed products.\qed
\end{theorem}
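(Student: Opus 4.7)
The plan is to bootstrap both assertions from three already-established facts: the adjunction $\pi_0 \dashv (\TD \hookrightarrow \Top)$ of Lemma~\ref{l:adjointnessofpi0}, its preservation of set-indexed products (Proposition~\ref{PropEngelking}), and the fact from Lemma~\ref{LemXcompactSamePi0} that $\pi_0$ of a compact Hausdorff space is automatically a Stone space. Since the factorization $\pi_0 \colon \KHaus \to \Stone$ has already been constructed from these ingredients, all that remains is to verify that the universal property and product preservation of the $\Top$-level functor restrict cleanly to the relevant subcategories.

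For the adjunction, I would fix $X \in \KHaus$ and $Y \in \Stone$ and exhibit the restricted quotient $q \colon X \to \pi_0 X$ (with codomain now viewed in $\Stone$) as universal from $X$ to the inclusion $\Stone \hookrightarrow \KHaus$. Since every Stone space is totally disconnected, any $f \colon X \to Y$ in $\KHaus$ is in particular a continuous map into a totally disconnected space, so Lemma~\ref{l:adjointnessofpi0} produces a unique factorization $f = g \circ q$ with $g \colon \pi_0 X \to Y$ continuous. Lemma~\ref{LemXcompactSamePi0} guarantees that $\pi_0 X$ is a Stone space, so $g$ is automatically a morphism of $\Stone$, and the adjunction drops out.

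For product preservation, the plan is to observe that $\KHaus$ is closed under Tychonoff products in $\Top$, and $\Stone$ is closed under Tychonoff products in $\TD$ (equivalently, in $\Top$). Hence for any family $(X_s)_{s\in S}$ of compact Hausdorff spaces with product $P \coloneqq \prod_{s\in S} X_s$, the product is computed identically in $\KHaus$ and in $\Top$, and similarly the product of $(\pi_0 X_s)_{s\in S}$ in $\Stone$ agrees with that in $\TD$. Proposition~\ref{PropEngelking} then supplies the isomorphism $\pi_0 P \cong \prod_{s\in S} \pi_0 X_s$ in $\TD$, which by Lemma~\ref{LemXcompactSamePi0} (applied to $P$) is automatically an isomorphism in $\Stone$. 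I do not foresee any substantive obstacle: the whole point of the careful setup in the preceding section is precisely to reduce Theorem~\ref{t:dualpierce} to this routine bookkeeping about the compatibility among the four nested categories $\Stone \subseteq \TD$ and $\KHaus \subseteq \Top$.
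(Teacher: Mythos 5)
Your proposal is correct and follows essentially the same route as the paper: the adjunction is obtained by restricting the universal property of Lemma~\ref{l:adjointnessofpi0} using the fact (Lemma~\ref{LemXcompactSamePi0}) that $\pi_0$ of a compact Hausdorff space is a Stone space, and product preservation follows from Proposition~\ref{PropEngelking} together with the observation that products in $\KHaus$ and $\Stone$ agree with those in $\Top$ via Tychonoff. The paper's proof is just a terser version of the same bookkeeping.
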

\begin{proof}
Since, as observed above, $\pi_0\colon\Top \to \TD$ restricts to $\pi_0\colon\KHaus\to \Stone$, the fact that the former is a left adjoint to ${\TD \rightarrow \Top}$ (Lemma~\ref{LemXcompactSamePi0}) restricts to the fact that  $\pi_0\colon\KHaus\to \Stone$ is left adjoint to ${\Stone \rightarrow \KHaus}$.
 It is standard that products in $\KHaus$ and in $\Stone$ agree with products in $\Top$ (using, in particular,  Tychonoff's Theorem that any product of compact spaces is compact), so  Proposition \ref{PropEngelking} entails that $\pi_0\colon\KHaus\to \Stone$ preserves all set-indexed products.
\end{proof}

\section{Spectra of MV-algebras}\label{s:Spectra}

In this section we recall  the material about spectra of MV-algebras that is needed in the sequel.

Recall that an ideal $\p$ of an MV-algebra $A$ is \emph{prime} if it is proper, and  the quotient $A/\p$ is totally ordered. The (\emph{prime}) \emph{spectrum} of an MV-algebra $A$ is 
\[
\Spec{A}\coloneqq\{\p\subseteq A\mid \p \text{ is a prime ideal of } A\}
\]
 topologised into the \emph{spectral space} of $A$, as follows. For  a subset $S\seq A$, define 
\begin{align*} 
\V{(S)}&\coloneqq\{\p \in \Spec{A}\mid S\subseteq \p\},\\
\Su{(S)}&\coloneqq \Spec{A}\setminus\V{(S)}=\{\p \in \Spec{A}\mid S\not\subseteq \p\}.
\end{align*}
The set $\V{(S)}$ is called the \emph{vanishing locus}, or \emph{zero set}, of $S$, while $\Su{(S)}$ is called its \emph{support}. If $a \in A$, write $\V{(a)}$ as a shorthand for $\V{(\{a\})}$, and similarly for $\Su{(a)}$.  Then the collection
 \begin{align*} 
 \{\V{(I)}\mid I \text{ is an ideal of } A\}
 \end{align*}
 is the set of closed sets for a topology on $\Spec{A}$ that makes the latter a spectral space in the sense of Hochster \cite{Hochster69}. The collection
 \begin{align*} 
 \{\Su{(a)}\mid  a\in A\}
 \end{align*}
is  a basis of compact open sets for this topology; see \cite[Chapitre 10]{BKW} and \cite[Chapter 4]{MundiciAdvanced}.  The topology is variously known as   the \emph{Stone},  \emph{Zariski}, or \emph{hull-kernel} topology of $A$.

The assignment ${A \mapsto \Spec{A}}$ extends to a functor ${\opCat{\MV} \to \Top}$, because inverse images of primes ideals along homomorphisms are prime. Althouh it is common to take the codomain of $\Spec$ as the category of spectral spaces and spectral maps, for our purposes in this paper it is expedient to regard $\Spec$ as taking values in $\Top$.

 The \emph{maximal spectrum} of an MV-algebra $A$ is
\[
\Max{A}\coloneqq\{\m\subseteq A\mid \m \text{ is a maximal ideal of } A\}.
\]
We have $\Max{A}\seq \Spec{A}$, or equivalently, any simple MV-algebra is totally ordered (see e.g.\ \cite[3.5.1]{CignoliEtAlBook}).  
  The \emph{maximal spectral space} of $A$ is the set $\Max{A}$ equipped with the subspace topology it inherits from $\Spec{A}$. Then $\Max{A}$ is a compact Hausdorff space \cite[Proposition 4.15]{MundiciAdvanced}, and every compact Hausdorff space arises in this manner from some MV-algebra $A$ \cite[Theorem 4.16(iv)]{MundiciAdvanced}.

The standard example of MV-algebra, the interval
$[0,1]$  equipped with the constant $0$ and the operations $\oplus$, $\neg$, generalises as follows. If $X$ is any set, the collection $[0,1]^{X}$ of all functions from $X$ to $[0,1]$ inherits the structure of an MV-algebra upon defining operations pointwise. If, additionally, $X$ is a topological space, since $\oplus\colon [0,1]^{2}\to [0,1]$, $\neg\colon[0,1]\to[0,1]$, and $0$ are continuous with respect to the Euclidean topology of $[0,1]$, the subset 
\begin{align}\label{eq:c(x)}
\C(X)\coloneqq\{f\colon X\to [0,1]\mid f \,\text{ is continuous}\}
\end{align}
is a subalgebra of the MV-algebra $[0,1]^{X}$. We shall describe a natural MV-ho\-mo\-mor\-phism $\eta_A\colon A \longrightarrow \C{(\Max{A})}$, for each MV-algebra $A$. Its existence descends from  H\"older's Theorem (Lemma \ref{MVHolder}), which allows us to define a close analogue to the Gelfand transform in functional analysis. Indeed, in light of that result, to  $a\in A$ and $\m\in\Max{A}$  we  associate the real number $\h_\m(a / \m)\in [0,1]$, obtaining the function
\begin{align}\label{eq:transform}
\widehat{a}\colon \Max{A}&\longrightarrow [0,1] \\
\m &\longmapsto h_{\m}(\tfrac{a}{\m}).\nonumber
\end{align}
It can be shown \cite[4.16.iii]{MundiciAdvanced} that the function \eqref{eq:transform} is continuous with respect to the Stone topology of $\Max{A}$ and the Euclidean topology of $[0,1]$.
We thereby arrive at the announced homomorphism
\begin{align}
\eta_{A} \colon A &\longrightarrow \C(\Max{A})\label{eq:unit}\\
a&\longmapsto \widehat{a}\nonumber
\end{align}
for each MV-algebra $A$.

\begin{lemma}\label{mv6} 
For any MV-homomorphism $h\colon A\to B$ and any $\m\in\Max{B}$ we have $h^{-1}(\m)\in\Max{A}$. Moreover, the inverse-image map $h^{-1}\colon \Max{B}\to\Max{A}$ is continuous with respect to the Stone topology.
\end{lemma}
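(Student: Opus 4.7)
The plan is to treat the two assertions separately. The substantive content is the first claim, namely that $h^{-1}(\m)\in\Max A$ whenever $\m\in\Max B$; the continuity claim will then follow by a direct computation of preimages of subbasic closed sets.

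For the maximality claim I would argue as follows. It is routine that $h^{-1}(\m)$ is an ideal of $A$, and it is proper because $h(1)=1\notin\m$. To upgrade properness to maximality I would exhibit an injective homomorphism $A/h^{-1}(\m)\hookrightarrow[0,1]$, which forces $A/h^{-1}(\m)$ to be simple. Invoking H\"older's Theorem (Lemma \ref{MVHolder}), the canonical map $h$ followed by the quotient $B\to B/\m$ composed with $\h_\m\colon B/\m\hookrightarrow [0,1]$ yields a homomorphism $A\to[0,1]$ whose kernel is exactly $h^{-1}(\m)$. Hence $A/h^{-1}(\m)$ embeds in $[0,1]$, and since every non-trivial MV-subalgebra of $[0,1]$ is simple (this is the characterisation of simple MV-algebras used already in the proof of Theorem~\ref{t:superseparableNew}), we conclude that $h^{-1}(\m)$ is maximal.

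For the continuity claim I would verify that the inverse image of each subbasic closed set is closed. The Stone topology on $\Spec A$ admits $\{\V(a)\mid a\in A\}$ as a closed subbasis, so the sets $\V(a)\cap\Max A$ form a closed subbasis for $\Max A$. A direct computation gives
\[
(h^{-1})^{-1}\!\bigl(\V(a)\cap\Max A\bigr)
=\{\m\in\Max B\mid a\in h^{-1}(\m)\}
=\{\m\in\Max B\mid h(a)\in\m\}
=\V(h(a))\cap\Max B,
\]
which is closed in $\Max B$. This proves continuity.

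The only real obstacle is the maximality claim, which distinguishes the MV-algebraic setting from the general commutative-ring setting (where preimages of maximal ideals can fail to be maximal). Once one recognises that maximality of an ideal $I\seq A$ is equivalent to $A/I$ embedding in $[0,1]$, the proof is essentially forced: H\"older's Theorem provides the embedding for $B/\m$, and this pulls back along $h$ in the evident way.
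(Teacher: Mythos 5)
Your proof is correct. The paper itself gives no argument here: it disposes of the first assertion by citing \cite[1.2.16]{CignoliEtAlBook} and declares the continuity claim a straightforward verification. Your write-up therefore supplies a genuine, self-contained proof where the paper defers to the literature, and it does so using only tools the paper has already set up: H\"older's Theorem (Lemma~\ref{MVHolder}) gives the embedding $\h_\m\colon B/\m\hookrightarrow[0,1]$, the composite $A\to B\to B/\m\to[0,1]$ has kernel exactly $h^{-1}(\m)$ (injectivity of $\h_\m$ is what makes the kernel computation come out right), and the first isomorphism theorem for MV-algebras identifies $A/h^{-1}(\m)$ with a non-trivial subalgebra of $[0,1]$, which is simple by the Archimedean property of $\R$; since maximality of an ideal is equivalent to simplicity of the quotient (as the paper notes just before Lemma~\ref{MVHolder}), the first claim follows. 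Your continuity computation $(h^{-1})^{-1}\bigl(\V(a)\cap\Max{A}\bigr)=\V(h(a))\cap\Max{B}$ is exactly the ``straightforward verification'' the paper has in mind, and it suffices because the sets $\V(a)$ form a closed subbasis. Two cosmetic remarks: the fact that non-trivial subalgebras of $[0,1]$ are simple is justified in the paper in the proof of Lemma~\ref{l:rational} (via the Archimedean property), not in Theorem~\ref{t:superseparableNew}, where H\"older's Theorem is used in the opposite direction; and it is worth noting explicitly that your argument isolates precisely why the MV-algebraic situation differs from $\Ring$ --- every simple MV-algebra embeds in the single algebra $[0,1]$, so simplicity pulls back along homomorphisms, whereas fields do not all embed in one another.
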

\begin{proof}The first assertion is proved in \cite[1.2.16]{CignoliEtAlBook}. The second assertion is a straightforward verification using the definition of Stone topology.
\end{proof}
\noindent In light of Lemma \ref{mv6} we henceforth regard $\Max$ as a functor:
\begin{align}\label{eq:maxfunctor}
\Max \colon \MV \longrightarrow \KHaus^{\rm op},
\end{align}
where $\KHaus$ denotes the category of compact Hausdorff spaces and their continuous maps. 

 Given a continuous map $f\colon X \to Y$ in $\KHaus$, it is elementary that the induced function 
\begin{align*}
\C(f)\colon \C(Y)&\longrightarrow\C(X),\\
g\in \C(Y)&\longmapsto g\circ f \in \C(X)
\end{align*}
is a morphism in \MV. We therefore regard $\C$ as a functor:
\begin{align*}
\C \colon  \KHaus^{\rm op}\longrightarrow  \MV. 
\end{align*}

There is an adjunction  
\begin{align}\label{eq:CDMadj}
\Max\dashv\C\colon \KHaus^{\rm op}\to \MV
\end{align}
known as the \emph{Cignoli-Dubuc-Mundici} adjunction \cite{CDMadj}; see \cite[Section 3]{MR} for further references and details not mentioned below.
Dually to \eqref{eq:unit}, for any  space $X$ in $\KHaus$ there is a continuous map
\begin{align}
\epsilon_{X} \colon X &\longrightarrow \Max{\C(X)}\label{eq:counit}\\
x&\longmapsto \{f\in\C(X)\mid f(x)=0\},\nonumber
\end{align}
and it is a standard fact that $\epsilon_X$ is a homeomorphism. (Compare \cite[4.16]{MundiciAdvanced}.)
Writing $\id{{\sf C}}$ for the identity functor on a category {\sf C}, we  can summarise the adjunction as follows.
\begin{theorem}[{\cite[Propositions 4.1 and 4.2]{CignoliEtAlBook}}]\label{t:Max-C-adj}
The functor $\Max$ is left adjoint to the fully faithful $\C$, i.e.\ $\Max\dashv\C\colon \KHaus^{\rm op}\to \MV$. The unit and the counit of the adjunction are the natural transformations $\eta \colon \id{\MV} \to \C\Max$ and $\epsilon \colon \Max\C \to \id{\KHaus^{\rm op}}$ whose components are given by \eqref{eq:unit} and \eqref{eq:counit}, respectively.\qed
\end{theorem}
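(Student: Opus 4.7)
The plan is to verify directly that $\eta$ and $\epsilon$ as defined in \eqref{eq:unit} and \eqref{eq:counit} constitute the unit and counit of an adjunction ${\Max \dashv \C}$, and then to show that $\epsilon$ is a natural isomorphism; the latter automatically yields that $\C$ is fully faithful. So the work splits into: (a)~well-defined\-ness and naturality of $\eta$ and $\epsilon$; (b)~the two triangle identities; (c)~the fact that each $\epsilon_X$ is a homeomorphism.

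For step (a) I would first check that $\eta_A$ is an MV-ho\-mo\-morph\-ism. The codomain $\C(\Max A)$ was already identified just before the statement, by continuity of $\widehat{a}$; that $\eta_A$ preserves the operations is a pointwise verification using H\"older's theorem (Lemma~\ref{MVHolder}). Naturality of $\eta$ reduces to showing, for $h\colon A\to B$ and $\m\in\Max B$, that ${h_{h^{-1}\m}=h_{\m}\circ \overline{h}}$, where $\overline{h}\colon A/h^{-1}\m\to B/\m$ is the induced injection into a simple MV-algebra; both sides are MV-homo\-morph\-isms ${A/h^{-1}\m\to[0,1]}$, so uniqueness in H\"older's theorem forces equality. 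For $\epsilon_X$, one checks that ${\{f\in\C(X) \mid f(x)=0\}}$ is the kernel of the evaluation MV-ho\-mo\-morph\-ism ${\C(X)\to[0,1]}$ at $x$, hence maximal; continuity in the Stone topology of $\Max\C(X)$ is immediate from the description of basic closed sets $\V(f)$.

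Step (b), the triangle identities, unfolds to routine verifications. The identity ${\C(\epsilon_X)\circ\eta_{\C X}=\id{\C X}}$ asserts that for every $g\in\C(X)$ and $x\in X$ one has ${\widehat{g}(\epsilon_X x)=g(x)}$, which is the definition of~$\widehat{g}$ at the maximal ideal of functions vanishing at $x$. Dually, ${\epsilon_{\Max A}\circ\Max(\eta_A)=\id{\Max A}}$ asserts that for ${\m\in\Max A}$ one has ${\eta_A^{-1}(\{\widehat{a}\mid\widehat{a}(\m)=0\})=\m}$, which follows from faithfulness of $h_\m$.

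The main obstacle, and the real content, is step (c): each $\epsilon_X$ is a homeomorphism. Injectivity comes from Urysohn's lemma applied to the normal space $X$, which separates distinct points by a continuous $[0,1]$-valued function. Surjectivity is the Gelfand-type argument: given ${\m\in\Max\C(X)}$, I would show that ${\bigcap_{f\in\m}f^{-1}(0)\neq\emptyset}$, since any $x$ in this intersection satisfies ${\m\seq\{f\mid f(x)=0\}}$, and maximality of $\m$ together with properness of the latter ideal forces equality. Non-emptiness reduces, by compactness of $X$, to the finite intersection property of ${\{f^{-1}(0)\mid f\in\m\}}$. For this, if ${f_1,\ldots,f_n\in\m}$ had empty joint zero locus, then ${f_1\vee\cdots\vee f_n}$ would be bounded below by some rational ${\delta>0}$ on the compact space $X$, and a suitable MV-algebraic combination (taking $n$-fold $\oplus$'s) would then produce the constant $1$ inside $\m$, contradicting properness. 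Once $\epsilon_X$ is a continuous bijection from the compact space $X$ to the Hausdorff space $\Max\C(X)$, the Closed Map Lemma delivers the homeomorphism, completing the proof.
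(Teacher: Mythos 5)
Your proposal is correct, and it is worth noting that the paper itself offers no proof of this statement: the theorem is stated with a \verb|\qed| and attributed to \cite[Propositions 4.1 and 4.2]{CignoliEtAlBook}, so the only comparison available is with the standard textbook argument, which is essentially what you reconstruct. Your decomposition is sound: the triangle identities do reduce to $\widehat{g}(\epsilon_X x)=g(x)$ and $\eta_A^{-1}(\epsilon_{\Max A}\m)=\m$, both of which follow from the uniqueness clause in H\"older's Theorem (Lemma~\ref{MVHolder}) exactly as you say; naturality of $\eta$ via the induced injection $A/h^{-1}\m\to B/\m$ and uniqueness of the H\"older morphism is the right argument (in fact the paper's formulation of H\"older gives uniqueness among \emph{all} homomorphisms to $[0,1]$, so you do not even need injectivity of the induced map). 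The real content is, as you identify, that $\epsilon_X$ is a homeomorphism, and your Gelfand-type argument is complete: for surjectivity, the point that $f_1\vee\cdots\vee f_n$ lies in $\m$ (being below $f_1\oplus\cdots\oplus f_n$) and that an $n$-fold $\oplus$-sum of an element bounded below by $\delta>0$ equals the constant $1$ is precisely the standard contradiction with properness; injectivity via Urysohn and the Closed Map Lemma finish the job, using that $\Max$ of any MV-algebra is compact Hausdorff. The only small gap is that you assert but do not verify naturality of $\epsilon$; this is a one-line computation ($\Max(\C\varphi)(\epsilon_X x)=\{g\mid g(\varphi x)=0\}=\epsilon_Y(\varphi x)$) and does not affect correctness. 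In short, where the paper defers entirely to the literature, you supply the full verification, and it is the right one.
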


\section{The Pierce functor preserves coproducts}\label{s:Pierce}

The category $\BA$ of Boolean algebras may be identified with the domain of the full subcategory ${\I \colon \BA \rightarrow \MV}$ determined by the MV-algebras whose operation $\oplus$ is idempotent. It is then clear that ${\I \colon \BA \rightarrow \MV}$ is a variety so, in particular, it has a left adjoint.
It also has a right adjoint that we now describe.

We write $\PP{A}$ for the collection of all Boolean elements of the MV-algebra $A$. By \cite[1.5.4]{CignoliEtAlBook}, $\PP{A}$ is the largest subalgebra of $A$ that is a Boolean algebra. A homomorphism $h\colon A\to B$ preserves Boolean elements, because the latter are defined by equational conditions. Therefore, $h$ induces by restriction a function $\PP{h}\colon \PP{A}\to \PP{B}$ that is evidently a homomorphism of Boolean algebras. We thus obtain a functor 
\[
\PP\colon \MV \longrightarrow \BA
\]
from the category of MV-algebras to that of Boolean algebras; we call it the \emph{Pierce functor} because of the close analogy with the theory developed in \cite{Pierce67} for rings.  
 \begin{lemma}\label{l:PierceRight}The functor $\PP$ is right adjoint to the functor $\I$.
 \end{lemma}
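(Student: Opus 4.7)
The plan is to verify the adjunction $\I\dashv\PP$ by exhibiting a natural bijection
\[
\hom_{\MV}(\I B, A) \;\cong\; \hom_{\BA}(B, \PP A)
\]
for every Boolean algebra $B$ and every MV-algebra $A$. The counit at $A$ will be the inclusion homomorphism $\iota_A\colon \I(\PP A)\to A$, which is a morphism in $\MV$ because $\PP A$ is a subalgebra of $A$ by \cite[1.5.4]{CignoliEtAlBook} as recalled above.

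The key observation is that, in any Boolean algebra $B$, \emph{every} element is Boolean in the MV-algebraic sense, since in $\I B$ the operation $\oplus$ is idempotent. Now let $f\colon \I B\to A$ be any MV-homomorphism. Because the condition $x\oplus x=x$ defining Boolean elements is equational, $f$ sends Boolean elements to Boolean elements. Consequently the image of $f$ is contained in $\PP A$, and so $f$ factors (uniquely, because $\iota_A$ is injective) as $f = \iota_A\circ \I\bar{f}$ for a function $\bar{f}\colon B\to \PP A$. I would then verify that $\bar{f}$ is a homomorphism of Boolean algebras, which is immediate: the MV-operations on $\PP A$ coincide with the underlying Boolean algebra operations (this is precisely the content of $\PP A$ being a Boolean subalgebra), so the preservation properties of $f$ transfer to $\bar{f}$.

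Conversely, given any $g\colon B\to\PP A$ in $\BA$, the composite $\iota_A\circ \I g\colon \I B\to A$ is a morphism in $\MV$, and the two assignments $f\mapsto\bar{f}$ and $g\mapsto\iota_A\circ \I g$ are mutually inverse by construction. Naturality in both $B$ and $A$ is a straightforward diagram chase that follows from the fact that every homomorphism $h\colon A\to A'$ in $\MV$ restricts to $\PP h$ on Boolean elements, so $\iota_{A'}\circ \I(\PP h)=h\circ \iota_A$.

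There is essentially no obstacle here: the content of the statement is that the defining property of $\PP A$, namely being the \emph{largest} Boolean subalgebra of $A$, is precisely what is needed for the universal property of the counit, and this in turn rests on the equational nature of the predicate ``$x$ is a Boolean element''. The only minor point to be careful with is that one must distinguish $\PP A\in\BA$ from $\I(\PP A)\in\MV$ when writing down the counit, so that the triangular identities are stated with the correct domains and codomains.
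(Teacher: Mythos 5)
Your proof is correct and is essentially the same argument as the paper's, which simply cites the fact that $\PP A$ is the largest Boolean subalgebra of $A$; you have spelled out in detail why that maximality (via the equational definition of Boolean elements) yields the universal property of the counit inclusion. No issues.
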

 \begin{proof}This is a direct consequence of the fact that $\PP{A}$ is the largest Boolean subalgebra of $A$, for any MV-algebra $A$. 
 \end{proof}

The proof of  Proposition~\ref{PropMVisCoextensive}---in particular, Lemma \ref{l:productsplitting}---makes it clear that ${\PP\colon \MV \rightarrow \BA}$ is essentially the `complemented subobjects' functor $\rmB$ determined by the extensive category ${\opCat{\MV}}$.

We now embark on the proof of the central fact that ${\PP \colon \MV \rightarrow \BA}$ preserves  coproducts. Our aim is to reduce the problem to a situation where we can apply the topological results in Section \ref{s:stonereflection}.

\begin{lemma}\label{l:booleanelement} For any MV-algebra $A$ and any element $a\in A$, $a$ is Boolean if, and only if, for each  prime ideal $\p$ of $A$, we have $a/\p\in \{0,1\}\seq A/\p$.
\end{lemma}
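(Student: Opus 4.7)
\medskip

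\noindent\textbf{Proof proposal.} The plan is to prove each direction by reducing to the structure of totally ordered quotients $A/\p$, together with Chang's subdirect representation theorem (equivalently, the fact that the intersection of all prime ideals of an MV-algebra is $\{0\}$).

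For the forward direction, suppose $a$ is Boolean, so that $a\wedge\neg a=0$ (one of the equivalent conditions recalled before Lemma~\ref{LemPOofLocalizations}). Fix a prime ideal $\p$ of $A$. The quotient homomorphism $A\to A/\p$ preserves $\wedge$ and $\neg$, so $(a/\p)\wedge\neg(a/\p)=0$ in $A/\p$. Because $\p$ is prime, $A/\p$ is totally ordered; hence either $a/\p\leq \neg(a/\p)$ or $\neg(a/\p)\leq a/\p$. In the first case $a/\p=(a/\p)\wedge\neg(a/\p)=0$, and in the second $\neg(a/\p)=0$, i.e.\ $a/\p=1$. Either way $a/\p\in\{0,1\}$.

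For the backward direction, assume $a/\p\in\{0,1\}$ for every prime ideal $\p$, and set $b\coloneqq a\wedge\neg a$. In each quotient $A/\p$, if $a/\p=0$ then $b/\p=0\wedge 1=0$; if $a/\p=1$ then $b/\p=1\wedge 0=0$. Thus $b\in\p$ for every prime $\p$ of $A$. Now invoke the Chang subdirect representation theorem: every MV-algebra embeds into the product $\prod_{\p\in\Spec A}A/\p$ via the canonical map, so the intersection of all prime ideals of $A$ is $\{0\}$. Consequently $b=a\wedge\neg a=0$, which by \cite[1.5.3]{CignoliEtAlBook} is equivalent to $a$ being Boolean.

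The whole argument is short; the only non-trivial input is the fact that the intersection of all prime ideals of an MV-algebra is trivial, which is exactly Chang's subdirect representation theorem and is a standard result. I expect no real obstacle, only the need to cite this theorem carefully and to check that the forward implication uses the correct equivalent formulation of ``Boolean'' (namely $a\wedge\neg a=0$) so that the preservation under the quotient map is immediate.
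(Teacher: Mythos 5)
Your proof is correct and follows essentially the same route as the paper's: the forward direction reduces to the observation that in a totally ordered quotient $A/\p$ the only Boolean elements are $0$ and $1$ (via the case split $a/\p\leq\neg(a/\p)$ or $\neg(a/\p)\leq a/\p$), and the backward direction uses $\bigcap\Spec A=\{0\}$, i.e.\ the subdirect embedding $A\hookrightarrow\prod_{\p}A/\p$. The only cosmetic difference is that you verify the identity $a\wedge\neg a=0$ where the paper verifies the equivalent $a\vee\neg a=1$.
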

\begin{proof}Let $C$ be any totally ordered MV-algebra. For $x\in C$, either $x\leq \neg x$ or $\neg x \leq x$. If the former holds then $x\wedge \neg x=x$, so that if $x$ is Boolean then $x=0$. If the latter holds then $x\vee \neg x=x$, and thus $x=1$ if $x$ is Boolean. In summary, if $x\in C$ is Boolean then $x\in \{0,1\}$. The converse implication is clear. Summing up, the Boolean elements of $C$ are precisely $0$ and $1$.

Boolean elements, being definable by equational conditions, are preserved by homomorphisms. Hence if $a$ is Boolean then $a/\p\in A/\p$ is Boolean, and therefore, since $A/\p$ is totally ordered, $a/\p\in\{0,1\}$ by the argument in the preceding paragraph. This proves the left-to-right implication in the statement of the lemma.
For the converse implication, we recall that in any MV-algebra $A$ we have $\bigcap \Spec{A}=\{0\}$ \cite[1.3.3]{CignoliEtAlBook}. Hence, the function $\iota \colon A \longrightarrow \prod_{\p\in \Spec{A}} A/\p$ defined by $a \in A \longmapsto (a/\p)_{\p \in \Spec}\in \prod_{\p\in \Spec{A}} A/\p$ is an injective homomorphism. Assume that for each $\p\in\Spec{A}$ we have $a/\p\in\{0,1\}$. Since operations in $\prod_{\p\in \Spec{A}} A/\p$ are computed pointwise, we infer $\iota(a)\vee\neg\iota(a)= (a/\p)_{\p \in \Spec}\vee \neg (a/\p)_{\p \in \Spec}=1$, and therefore, since $\iota$ is an isomorphism onto its range, $a\vee \neg a=1$. This completes the proof.
\end{proof}

\begin{lemma}\label{l:specreflectssums} Let $A$ be an MV-algebra, and suppose there exist \textup{(}possibly empty\textup{)}  closed subsets $X_0,X_1\seq \Spec{A}$ with $\Spec{A}=X_0\cup X_1$ and $X_0\cap X_1=\emptyset$. Then there exists exactly one Boolean element $b\in A$ such that $b/\p=0$ for each $\p\in X_0$ and $b/\p=1$ for each  $\p\in X_1$.
\end{lemma}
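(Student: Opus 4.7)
The plan is to translate the topological decomposition $\Spec A = X_0 \sqcup X_1$ into an algebraic product decomposition of $A$ and then invoke Lemma~\ref{l:productsplitting} to extract the required Boolean element.

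First, I would define ideals $I_0 := \bigcap_{\p \in X_0} \p$ and $I_1 := \bigcap_{\p \in X_1} \p$, with the convention that an empty intersection equals $A$. Since every closed subset of $\Spec A$ has the form $\V(J)$ for some ideal $J$, one verifies $\V(I_i) = X_i$. From $X_0 \cup X_1 = \Spec A$ and the standard MV-algebraic fact $\bigcap \Spec A = \{0\}$ (used in Lemma~\ref{l:booleanelement}), one obtains $I_0 \cap I_1 = \bigcap_{\p \in \Spec A}\p = \{0\}$. From $\V(I_0 + I_1) = \V(I_0) \cap \V(I_1) = X_0 \cap X_1 = \emptyset$, together with the fact that every proper MV-ideal is contained in some maximal (hence prime) ideal, one obtains $I_0 + I_1 = A$.

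Next I would establish a Chinese-Remainder-style isomorphism $A \cong A/I_0 \times A/I_1$. The canonical homomorphism into the product is injective because $I_0 \cap I_1 = \{0\}$. For surjectivity, pick $e_0 \in I_0$, $e_1 \in I_1$ with $e_0 \oplus e_1 = 1$, which exist because $I_0 + I_1 = A$. Since $\neg e_0 \odot \neg e_1 = \neg(e_0 \oplus e_1) = 0$, we have $\neg e_0 \leq e_1$ and $\neg e_1 \leq e_0$; so $\neg e_0 \in I_1$ and $\neg e_1 \in I_0$ by downward closure of MV-ideals. A direct computation then shows that, for any $x_0, x_1 \in A$, the element $(x_0 \odot \neg e_0) \oplus (x_1 \odot \neg e_1)$ maps to $(\overline{x_0}, \overline{x_1})$ in $A/I_0 \times A/I_1$: modulo $I_0$, $\overline{e_0} = 0$ and $\overline{\neg e_1} = 0$, so the first summand reduces to $\overline{x_0}$ and the second to $0$, and symmetrically modulo $I_1$.

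Given the product decomposition $A \cong A/I_0 \times A/I_1$, Lemma~\ref{l:productsplitting} produces a unique Boolean element $b \in A$ with $\ker(A \to A/I_0) = \langle b\rangle = I_0$ and $\ker(A \to A/I_1) = \langle \neg b\rangle = I_1$. Thus $b \in I_0 \subseteq \p$ for every $\p \in X_0$, giving $b/\p = 0$, and $\neg b \in I_1 \subseteq \p$ for every $\p \in X_1$, giving $b/\p = 1$. Uniqueness is immediate: since $\bigcap \Spec A = \{0\}$, the canonical map $A \to \prod_{\p \in \Spec A} A/\p$ is injective, so a Boolean element is determined by its values at all primes (compare the argument of Lemma~\ref{l:booleanelement}).

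The main obstacle is the Chinese-Remainder surjectivity step. It is conceptually routine but relies on the precise interplay between the MV-operations $\oplus$, $\odot$, $\neg$ and the combinatorics of ideals; all the earlier ingredients---downward closure of ideals, the identity $\neg(a \oplus b) = \neg a \odot \neg b$, and the structure of the ideal sum---must be combined carefully to produce the explicit preimage.
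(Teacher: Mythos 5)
Your proof is correct, and its skeleton---pass from the partition $\Spec{A}=X_0\cup X_1$ to ideals $I_0,I_1$ with $I_0\cap I_1=\{0\}$ and $I_0\vee I_1=A$, run a Chinese Remainder argument, and get uniqueness from $\bigcap\Spec{A}=\{0\}$---is the same as the paper's. The differences lie in how the middle steps are discharged: the paper cites the bijection between closed sets and ideals and the Chinese Remainder Theorem from Bigard--Keimel--Wolfenstein as black boxes, and then deduces that $b$ is Boolean from $b/\p\in\{0,1\}$ at every prime via Lemma~\ref{l:booleanelement}; you instead prove CRT surjectivity by hand, exhibiting the explicit preimage $(x_0\odot\neg e_0)\oplus(x_1\odot\neg e_1)$, and you extract the Boolean element from the resulting product decomposition via Lemma~\ref{l:productsplitting}, which hands you $I_0=\langle b\rangle$ and $I_1=\langle\neg b\rangle$ and hence the values $b/\p$ directly. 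Your route is more self-contained and makes the idempotent-style witnesses visible; the paper's is shorter because the heavy lifting is outsourced. Two small points deserve a line in a final write-up: the existence of $e_0\in I_0$, $e_1\in I_1$ with $e_0\oplus e_1=1$ uses that the join of two ideals is $\{a\mid a\leq i\oplus j,\ i\in I_0,\ j\in I_1\}$ (ideals being closed under $\oplus$), together with the fact that every proper ideal lies in a maximal, hence prime, ideal; and injectivity of $A\to A/I_0\times A/I_1$ from $I_0\cap I_1=\{0\}$ invokes the standard MV-algebraic fact that a homomorphism with trivial kernel is injective.
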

\begin{proof}By \cite[10.1.7]{BKW}, there is exactly one ideal $I_i$ of $A$ such that $\V{(I_i)}=X_i$, $i=0,1$. Consider the elements $0,1\in A$. The fact that $\Spec{A}$ is partitioned into $X_0$ and $X_i$  entails  $I_0\vee I_1=A$ and $I_0\cap I_1=\{0\}$, so that the Chinese Remainder Theorem \cite[Lemme 10.6.3]{BKW} applied to $0$ and $X_0$, and to $1$ and $X_1$, yields   one element $b\in A$ such that $b/I_0=0$ and $b/I_1=1$. Using the Third Isomorphism Theorem, the latter conditions imply $b/\p\in \{0,1\}$ for each $\p\in \Spec{A}$ so that by Lemma \ref{l:booleanelement}  we conclude that $b$ is Boolean. If $b'\in A$ also satisfies $b'/\p=0$ for each $\p\in X_0$ and $b'/\p=1$ for each  $\p\in X_1$, then $b/\p=b'/\p$ for $\p\in\Spec{A}$, so that $b=b'$ because  $\bigcap \Spec{A}=\{0\}$ \cite[1.3.3]{CignoliEtAlBook}.
\end{proof}

We record a corollary that will have further use in the paper. It is the exact analogue for MV-algebras of a standard result for the category $\Ring$, see e.g.\ \cite[Theorem 7.3]{Jacobson-II}. In order to state it, let us  write $\Cp{X}$ for the Boolean algebra of clopen sets of any topological space  $X$. Let us then observe that the uniqueness assertion about the Boolean element $b$ in Lemma~\ref{l:specreflectssums} allows us to define, for any MV-algebra $A$, a function 
\begin{align}\label{eq:chi}
\chi_A\colon\Cp(\Spec{A})\longrightarrow {\PP}A
 \end{align}
 that assigns to each $X_0\in \Cp{(\Spec{A})}$ the unique element $b\in{\PP}A$ with the properties stated in that lemma  with respect to $X_0$ and $X_1\coloneqq \Spec{A}\setminus X_0$. It is then elementary to verify that $\chi_A$ is a homomorphism of Boolean algebras.

\begin{corollary}\label{c:jacobson}For any MV-algebra $A$, the function
\[
\phi_A\colon {\PP}A \longrightarrow\Cp(\Spec{A})
\]
that sends $b\in {\PP}A$ to $\V{(b)}\seq \Cp(\Spec{A})$ is  an isomorphism of Boolean algebras whose inverse is the homomorphism $\chi_A$ in \eqref{eq:chi}. In particular, $A$ is  indecomposable if, and only if, $\Spec{A}$ is connected.
\end{corollary}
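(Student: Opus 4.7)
The plan is to break the corollary into three verifications, all of which rest on Lemma \ref{l:booleanelement} and the uniqueness clause of Lemma \ref{l:specreflectssums}, and then to deduce the indecomposability statement at the end.

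First I would check that $\phi_A$ is well-defined. For $b\in\PP A$, Lemma \ref{l:booleanelement} forces $b/\p\in\{0,1\}$ for every $\p\in\Spec A$, and since $\p$ is proper, the two cases $b/\p=0$ and $b/\p=1$ are mutually exclusive and exhaustive. Translating back, $b\in\p$ happens exactly when $\neg b\notin\p$, so $\V(b)=\Su(\neg b)$. The right-hand side is a basic compact open of the spectral topology, while the left-hand side is automatically closed; hence $\V(b)$ is clopen, as required.

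Second I would verify that $\phi_A$ is a morphism of Boolean algebras. The identity $\V(b)=\Su(\neg b)$ immediately gives $\phi_A(\neg b)=\Spec A\setminus\phi_A(b)$, and $\phi_A$ trivially sends $0$ to $\Spec A$ and $1$ to $\emptyset$. On Boolean elements the lattice meet $b\wedge b'$ coincides with the MV-algebraic meet, and standard properties of the vanishing locus yield $\V(b\wedge b')=\V(b)\cup\V(b')$ in the prime spectrum; combined with preservation of complements and constants, this is enough.

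Third I would show that $\phi_A$ and $\chi_A$ are mutually inverse. Given $b\in\PP A$, set $X_0\coloneqq\V(b)$ and $X_1\coloneqq\Su(b)$. By the analysis in the first step, $b/\p=0$ for $\p\in X_0$ and $b/\p=1$ for $\p\in X_1$, so $b$ itself satisfies the defining conditions of $\chi_A(X_0)$; uniqueness in Lemma \ref{l:specreflectssums} then gives $\chi_A(\phi_A(b))=b$. Conversely, for $X_0\in\Cp(\Spec A)$ the element $b\coloneqq\chi_A(X_0)$ satisfies $b/\p=0$ precisely on $X_0$, whence $\phi_A(b)=\V(b)=X_0$.

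For the final assertion, coextensivity of $\MV$ together with Lemma \ref{l:productsplitting} yields that $A$ is indecomposable if, and only if, $\PP A$ is the two-element Boolean algebra; and $\Spec A$ is connected precisely when $\Cp(\Spec A)$ is two-element. The Boolean-algebra isomorphism $\phi_A$ established in the preceding steps then transports one condition into the other. The main obstacle I anticipate is essentially bookkeeping: keeping track of how $\phi_A$ interacts with the three different incarnations of the Boolean operations (in $\PP A$, in $A$ itself via $\oplus,\odot,\wedge,\vee,\neg$, and in $\Cp(\Spec A)$), but each identity reduces via Lemma \ref{l:booleanelement} to the elementary fact that in any totally ordered MV-algebra the only Boolean elements are $0$ and $1$.
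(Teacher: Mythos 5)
Your proof is correct and follows essentially the same route as the paper's: clopenness of $\V(b)$ and the homomorphism property via Lemma~\ref{l:booleanelement}, the two composites $\chi_A\phi_A$ and $\phi_A\chi_A$ computed prime-by-prime using the uniqueness clause of Lemma~\ref{l:specreflectssums} (which is exactly the paper's appeal to $\bigcap\Spec{A}=\{0\}$), and the final clause by transporting the two-element condition across the isomorphism via Lemma~\ref{l:productsplitting}. The only quibble---inherited from the statement itself rather than introduced by you---is that, as your own computations show, $b\mapsto\V(b)$ sends $0\mapsto\Spec{A}$ and $\wedge\mapsto\cup$, so strictly speaking it is an isomorphism onto the order-dual of $\Cp(\Spec{A})$ (equivalently, $b\mapsto\Su(b)$ is the order-preserving isomorphism); this affects none of the conclusions.
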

\begin{proof}
By Lemma~\ref{l:booleanelement}  it is clear that $\V{(b)}$ for each $b\in{\PP}A$ is clopen and that $\phi_A$ is a homomorphism. Let us consider $b'\coloneqq\chi_A\phi_Ab$. For each $\p\in \V{(b)}$ we have $b/\p=0$ by definition of $\V$, and $b'/\p=0$ by the defining property of $b'$.  Similarly, for each $\p\in\Spec{A}\setminus\V{(A)}$ we have $b/\p=b'/\p=0$. Thus, $b$ and $b'$ agree at each prime and thus $b=b'$ because $\bigcap\Spec{A}=\{0\}$ \cite[1.3.3]{CignoliEtAlBook}. Conversely, for $X_0\in \Cp{(\Spec{A})}$, consider the clopen $\phi_A\chi_AX_0$. For $\p\in \Spec{A}$, by definition of $\chi_A$ we have $\p \in X_0$ if, and only if, $(\chi_AX_0)/\p=0$. Hence
$\phi_A (\chi_A X_0)=X_0$, and the proof is complete.
\end{proof}
The \emph{radical} of $A$ is the ideal
\[
\Rad{A}\coloneqq\bigcap\Max{A}.
\]
In accordance with standard terminology in general algebra, one says $A$ is \emph{semisimple} precisely when $\Rad{A}=\{0\}$.
We note in passing that, unless $A$ is semisimple, the statement in Lemma \ref{l:booleanelement} cannot be strenghtened to ``$a$ is Boolean if, and only if, for each  $\m\in \Max{A}$ we have $a/\m\in \{0,1\}\seq A/\m$''. 
\begin{lemma}\label{l:allbooleanarein}
Let $A$ be an MV-algebra, and suppose there exist \textup{(}possibly empty\textup{)}  closed subsets $X_0,X_1\seq \Max{A}$ with $\Max{A}=X_0\cup X_1$ and $X_0\cap X_1=\emptyset$. Then there exists exactly one Boolean element $b\in A$ such that $b/\m=0$ for each $\m\in X_0$ and $b/\m=1$ for each  $\m\in X_1$.
\end{lemma}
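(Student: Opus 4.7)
The plan is to reduce the statement to Lemma~\ref{l:specreflectssums} by extending the partition $(X_0, X_1)$ of $\Max A$ to a clopen partition of $\Spec A$. First I would set $I_i \coloneqq \bigcap_{\m \in X_i} \m$; the hypothesis $X_0 \cap X_1 = \emptyset$ gives $I_0 + I_1 = A$ (no maximal ideal contains the sum), and $X_0 \cup X_1 = \Max A$ gives $I_0 \cap I_1 = \Rad A$. I note that the obvious candidate $(\V(I_0), \V(I_1))$ does \emph{not} usually partition $\Spec A$: their union is $\V(\Rad A)$, which may be a proper subset of $\Spec A$ because $\Rad A$ need not be contained in every prime.

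To bridge this gap I would use the retraction $\rho \colon \Spec A \to \Max A$ sending each prime ideal to the unique maximal ideal containing it. Its continuity is a classical fact, descending from the corresponding result for unital $\ell$-groups \cite[Proposition~10.1.9]{BKW} through Mundici's $\Gamma$ functor. Continuity forces each $\widetilde{X}_i \coloneqq \rho^{-1}(X_i)$ to be clopen in $\Spec A$, and the $\widetilde{X}_i$ cover $\Spec A$ disjointly since the $X_i$ do so in $\Max A$. Applying Lemma~\ref{l:specreflectssums} then yields a unique Boolean $b \in A$ with $b/\p = i$ whenever $\p \in \widetilde{X}_i$; since $\rho$ fixes points of $\Max A$, we have $b/\m = i$ for every $\m \in X_i$, which settles existence.

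For uniqueness, I would argue that the values of a Boolean element on $\Max A$ already determine its values on the whole of $\Spec A$. If $\p$ is a prime with $\m \coloneqq \rho(\p)$, the induced quotient map $A/\p \twoheadrightarrow A/\m$ is a homomorphism between totally ordered MV-algebras whose restriction to $\{0, 1\}$ is a bijection onto $\{0, 1\}$. Since Boolean elements of any totally ordered MV-algebra necessarily lie in $\{0, 1\}$ (an observation used already in the proof of Lemma~\ref{l:booleanelement}), the value $b/\p$ is recoverable from $b/\m$. Hence any two Boolean elements of $A$ agreeing on $\Max A$ agree on every prime, and the injectivity of $A \hookrightarrow \prod_{\p \in \Spec A} A/\p$ recorded in \cite[1.3.3]{CignoliEtAlBook} forces them to coincide.

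The main obstacle I foresee is invoking continuity of $\rho$, which is not developed in the excerpt. An alternative strategy would be to pass first to the semisimple quotient $\overline{A} \coloneqq A/\Rad A$, where Lemma~\ref{l:specreflectssums} applies directly (now $\overline{I_0} \cap \overline{I_1} = \{0\}$, so $\V(\overline{I_0}) \cup \V(\overline{I_1}) = \Spec \overline{A}$), and then lift the resulting Boolean element back to $A$. That lifting, however, is itself a non-trivial statement, essentially equivalent to the surjectivity we are trying to prove.
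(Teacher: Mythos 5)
Your proposal is correct and follows essentially the same route as the paper: existence via the continuous retraction $\Spec A \to \Max A$ (the paper's $\lambda$, cited from \cite[10.2.3]{BKW}, so your worry about its availability is moot) pulling the partition back to $\Spec A$ and invoking Lemma~\ref{l:specreflectssums}, and uniqueness by observing that a Boolean element's value at a prime $\p$ is determined by its value at $\lambda\p$ because Boolean elements of totally ordered quotients lie in $\{0,1\}$ and $A/\lambda\p$ is nontrivial. The paper closes uniqueness by appealing to the uniqueness clause of Lemma~\ref{l:specreflectssums} rather than directly to $\bigcap\Spec A=\{0\}$, but that is the same argument.
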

\begin{proof}By \cite[1.2.12]{CignoliEtAlBook}, each $\p \in \Spec{A}$ is contained in exactly one $\lambda{\p} \in \Max{A}$, so that we can define a function
\begin{align}\label{eq:lambda}
\lambda\colon \Spec{A}&\longrightarrow \Max{A},\\
 \p &\longmapsto {\lambda}\p.\nonumber
\end{align} 
By \cite[10.2.3]{BKW}, this function is  continuous, and  it is a retraction for the inclusion $\Max{A}\seq\Spec{A}$. Therefore, $X'_0\coloneqq \lambda^{-1}[X_0]$ and $X'_1\coloneqq \lambda^{-1}[X_1]$ are closed subsets of $\Spec{A}$ satisfying $\Spec{A}=X'_0\cup X'_1$ and $X'_0\cap X'_1=\emptyset$. Now  Lemma \ref{l:specreflectssums} provides a unique Boolean element $b$ such that $b/\p=0$ for each $\p\in X_0'$, and $b/\p=1$ for each $\p \in X_1'$. As $X_i\seq X_i'$, $i=0,1$, $b$ satisfies the condition in the statement. Concerning uniqueness, suppose $a$ is a Boolean element of $A$ such that $a/\m=0$ for each $\m\in X_0$, and $a/\m=1$ for each $\m \in X_1$. We claim $a=b$. Indeed, let $\p \in X_i'$, $i=0,1$. Then $a/\lambda{\p}=i$ because $\lambda \p\in X_i$. The inclusion $\p\seq \lambda\p$ induces a quotient map $q\colon A/\p\to A/\lambda\p$. By Lemma \ref{l:booleanelement} we have $a/\p \in \{0,1\}$. Also, $A/\lambda\p$ is nontrivial. Therefore since $q(a/\p)=a/\lambda\p=i$ it follows that $a/\p=i$. By the uniqueness assertion in Lemma \ref{l:specreflectssums} we conclude $a=b$.
\end{proof}

\begin{remark}\label{r:lambda}
We observe that the analogue of Lemma \ref{l:allbooleanarein} about coproduct decompositions of $\Max{A}$ being indexed by idempotent elements does not hold in general for rings. Indeed, spectra of MV-algebras always are completely normal---which affords the existence of the map $\lambda$ used in the proof above---whereas spectra of rings are not, in general. 
For more on the important r\^{o}le that the continuous retraction $\lambda$ in \eqref{eq:lambda} plays in the theory of lattice-groups and MV-algebras, see \cite{BallMarraMcNeillPedrini} and the references therein.
\end{remark}

Our next objective is to show that  $\PP$ sends the unit $\eta$ of ${\C \dashv \Max}$ in \eqref{eq:unit} to an isomorphism.

\begin{lemma}\label{l:PierceOfUnit} For any MV-algebra $A$, the morphism $\PP \eta_A\colon \PP A\rightarrow (\PP \C \Max){A}$ is an isomorphism.
\end{lemma}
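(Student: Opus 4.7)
The plan is to prove bijectivity of $\PP\eta_A$ by unpacking the definitions and reducing the two halves to Lemma~\ref{l:allbooleanarein}. First I would note that $\PP\eta_A$ is simply the restriction of $\eta_A$ to Boolean elements, sending $b\in\PP A$ to the Gelfand-type transform $\widehat{b}\in\C(\Max A)$; and that $\widehat{b}$ is indeed Boolean in $\C(\Max A)$ because, by Lemma~\ref{l:booleanelement} applied to the prime ideal $\m\in\Max A$, we have $b/\m\in\{0,1\}$, so $\widehat{b}(\m)=\h_\m(b/\m)\in\{0,1\}$.

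For injectivity, suppose $b,b'\in\PP A$ give $\widehat{b}=\widehat{b'}$, so $b/\m=b'/\m$ for every $\m\in\Max A$. Define $X_1\df\widehat{b}^{-1}(\{1\})$ and $X_0\df \Max A\setminus X_1=\widehat{b}^{-1}(\{0\})$; both are closed in $\Max A$ because $\widehat{b}$ takes values in the discrete two-point set $\{0,1\}\subseteq[0,1]$ and is continuous (the preimages of $[0,\tfrac12)$ and $(\tfrac12,1]$ are open). Then $b$ and $b'$ both satisfy the hypothesis of the uniqueness clause of Lemma~\ref{l:allbooleanarein} relative to $X_0,X_1$, so $b=b'$.

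For surjectivity, let $\beta\in\PP\C(\Max A)$. Since the MV-operations in $\C(\Max A)$ are computed pointwise and the only Boolean elements of the totally ordered MV-algebra $[0,1]$ are $0$ and $1$, we must have $\beta(\m)\in\{0,1\}$ for every $\m\in\Max A$. Continuity of $\beta\colon \Max A\to[0,1]$ and the argument in the preceding paragraph show that $U\df\beta^{-1}(\{1\})$ is clopen. Applying Lemma~\ref{l:allbooleanarein} with $X_1\df U$ and $X_0\df \Max A\setminus U$ yields a Boolean element $b\in A$ with $\widehat{b}=\beta$, which is the desired preimage.

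I do not expect real obstacles here: the content of the lemma is already packaged by Lemma~\ref{l:allbooleanarein}, which gives both the existence and uniqueness we need. The only subtlety to keep straight is topological, namely that a Boolean element of $\C(\Max A)$ is automatically $\{0,1\}$-valued and that its fibre over $1$ is clopen, so that the partition hypothesis of Lemma~\ref{l:allbooleanarein} applies; this is where the passage from values at individual maximal ideals to a \emph{closed} partition of $\Max A$ takes place, and it is the only step requiring any care.
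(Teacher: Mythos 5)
Your proof is correct and follows essentially the same route as the paper's: both directions are reduced to Lemma~\ref{l:allbooleanarein}, with surjectivity obtained from the existence clause applied to the clopen partition $\beta^{-1}(\{0\})$, $\beta^{-1}(\{1\})$ of $\Max A$, and injectivity from the uniqueness clause. Your write-up merely makes explicit two points the paper leaves implicit (that $\widehat b$ is Boolean, and that the fibres over $0$ and $1$ are closed), which is fine.
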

\begin{proof}Let $b'\in \C{(\Max{A})}$ be Boolean, with the aim of exhibiting $b\in \PP{A}$ such that $\eta_A(b)=b'$. Evaluating the defining equality $b'\oplus b'=b'$ at each $\m\in\Max{A}$ we see that $b'(\m)\in \{0,1\}$ holds. Therefore, the two  closed subsets $X_0\coloneqq b'^{-1}[\{0\}]$ and  $X_1\coloneqq b'^{-1}[\{1\}]$ of $\Max{A}$ satisfy the hypotheses of Lemma \ref{l:allbooleanarein}. We conclude that there exists one Boolean element $b\in A$ with $b/\m=0$ for $\m\in X_0$ and $b/\m=1$ for $\m \in X_1$. By the definition of $\eta_A$ this entails at once $\eta_A(b)=b'$, so $\eta_A$ is surjective. By the uniqueness statement in Lemma \ref{l:allbooleanarein}, $\eta_A$ is also injective.
\end{proof}
Our next step will be to factor $\PP$ into a manner that is useful to our purposes. 
Lemma \ref{l:PierceOfUnit} implies that the functors $\MV\to\BA$ in the diagram below 
\begin{align}\label{eq:diagPierce}
\xymatrix{
\MV \ar[d]_-{\Max} \ar[r]^-{\PP}& \BA \\
\KHaus^{\rm op}\ar[r]_-{{\C}} & \MV \ar[u]_-{{\PP}}
}
\end{align}
are naturally isomorphic.
\begin{lemma}\label{l:co-pi0}The functor ${{\PP}{\C}\colon \KHaus^{\rm op} \to \BA}$  preserves all set-indexed coproducts.
\end{lemma}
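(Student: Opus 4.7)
The plan is to factor $\PP\C$ through the $\pi_0$ reflection developed in Section \ref{s:stonereflection} and then invoke Theorem \ref{t:dualpierce} together with Stone duality. More precisely, I would exhibit a natural isomorphism $\PP\C \cong \Cp \circ \pi_0^{\rm op}$, where $\Cp \colon \Stone^{\rm op} \to \BA$ sends a Stone space to its Boolean algebra of clopens.

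The first step is to identify $\PP\C(X)$ with $\Cp(X)$, the Boolean algebra of clopen subsets of $X$, for every $X$ in $\KHaus$. Since the MV-operations on $\C(X)$ are defined pointwise, a function $f \in \C(X)$ is Boolean if, and only if, $f \oplus f = f$ holds pointwise, equivalently $f(x) \in \{0,1\}$ for every $x \in X$. Continuity then forces $f$ to be the characteristic function $\chi_U$ of the clopen $U = f^{-1}[\{1\}]$, and conversely every $\chi_U$ with $U \seq X$ clopen is a Boolean element of $\C(X)$. This bijection is patently a Boolean-algebra isomorphism and is natural in $X$, yielding $\PP\C \cong \Cp$ as functors $\KHaus^{\rm op} \to \BA$.

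The second step is to observe that $\Cp(X) \cong \Cp(\pi_0 X)$ naturally in $X$. Indeed, a clopen of any space $Y$ is the same as a continuous map $Y \to \2$ with $\2$ discrete; since $\2$ is totally disconnected, Lemma \ref{l:adjointnessofpi0} gives a natural bijection between continuous maps $X \to \2$ and continuous maps $\pi_0 X \to \2$, which is in fact an isomorphism of the induced Boolean algebras of clopens. Combining the two identifications yields $\PP\C \cong \Cp \circ \pi_0^{\rm op}$.

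Finally, by Theorem \ref{t:dualpierce} the functor $\pi_0 \colon \KHaus \to \Stone$ preserves all set-indexed products, so $\pi_0^{\rm op} \colon \KHaus^{\rm op} \to \Stone^{\rm op}$ preserves all set-indexed coproducts; and Stone duality provides an equivalence $\Cp \colon \Stone^{\rm op} \to \BA$, which in particular preserves all colimits. The composite $\Cp \circ \pi_0^{\rm op} \cong \PP\C$ therefore preserves all set-indexed coproducts. The only mildly delicate point in the argument is checking the naturality of the identification $\PP\C \cong \Cp \circ \pi_0^{\rm op}$; once this is in place, preservation of coproducts is immediate and requires no new topological input beyond Section \ref{s:stonereflection}.
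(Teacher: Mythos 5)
Your argument is correct and follows the same route as the paper: the paper's proof states, as an exercise via Stone duality, that $\PP\C$ corresponds under taking opposites to $\pi_0\colon\KHaus\to\Stone$, and then invokes Theorem~\ref{t:dualpierce}; you have simply carried out that exercise explicitly by identifying $\PP\C(X)$ with $\Cp(X)$ and using the reflection to get $\Cp(X)\cong\Cp(\pi_0X)$. No gaps.
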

\begin{proof}Using Stone duality, it is an exercise to verify that the composite functor ${{\PP}{\C}\colon \KHaus^{\rm op}\to \BA}$ induces, by taking opposite categories on each side, a functor naturally isomorphic to the functor $\pi_0\colon \KHaus\to \Stone$ of Section \ref{s:stonereflection}. The lemma then follows from Theorem \ref{t:dualpierce}.
\end{proof}

We finally obtain the main result of this section.

\begin{theorem}\label{t:Pierce}The Pierce functor $\PP\colon \MV\to \BA$ preserves all set-indexed coproducts.
\end{theorem}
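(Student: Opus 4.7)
The plan is to leverage the natural isomorphism between $\PP$ and the composite $\PP\circ \C\circ \Max$ established by Lemma \ref{l:PierceOfUnit}, which underlies the commutative (up to natural isomorphism) diagram \eqref{eq:diagPierce}. Since preservation of colimits is invariant under natural isomorphism, it suffices to prove that the composite ${\PP\circ \C\circ \Max\colon \MV \to \BA}$ preserves set-indexed coproducts.

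First, I would note that $\Max\colon \MV\to \KHaus^{\rm op}$ is a left adjoint by the Cignoli--Dubuc--Mundici adjunction of Theorem \ref{t:Max-C-adj}, hence preserves all colimits; in particular, it carries set-indexed coproducts in $\MV$ to set-indexed coproducts in $\KHaus^{\rm op}$ (equivalently, to set-indexed products of compact Hausdorff spaces). Next, Lemma \ref{l:co-pi0} asserts that ${\PP\circ \C\colon \KHaus^{\rm op}\to \BA}$ preserves all set-indexed coproducts. The composite of two coproduct-preserving functors preserves coproducts, so ${\PP\circ \C\circ \Max}$ preserves all set-indexed coproducts, whence so does $\PP$.

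The substantive content of this argument has already been absorbed by the earlier work: the non-trivial step was Lemma \ref{l:PierceOfUnit}, which required the machinery of Lemmas \ref{l:booleanelement} and \ref{l:allbooleanarein} (and ultimately the retraction $\lambda\colon \Spec{A}\to \Max{A}$ together with the Chinese Remainder Theorem) to show that every Boolean element of $\C(\Max{A})$ is the Gelfand transform of a unique Boolean element of $A$. With that in hand, the main result is a one-line assembly of the two preservation properties with the adjointness of $\Max\dashv \C$.
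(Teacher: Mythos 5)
Your argument is correct and is essentially identical to the paper's own proof of Theorem~\ref{t:Pierce}: both rely on the natural isomorphism $\PP\cong\PP\C\Max$ from Lemma~\ref{l:PierceOfUnit}, the colimit-preservation of the left adjoint $\Max$ from Theorem~\ref{t:Max-C-adj}, and Lemma~\ref{l:co-pi0} for $\PP\C$. Nothing to add.
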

\begin{proof}As we saw above, the triangle \eqref{eq:diagPierce} commutes up to a natural isomorphism. Further, $\Max$  preserves arbitrary set-indexed colimits because it is left adjoint by Theorem \ref{t:Max-C-adj}; and
  ${\PP}{\C}$ preserves set-indexed coproducts by Lemma \ref{l:co-pi0}. Hence $\PP$ preserves set-indexed coproducts.
\end{proof}

\section{Main result, and final remarks}\label{s:main}

Let $\calA$ be a coextensive category.
Recall from the introduction that an object $A$ in  $\calA$ is {\em separable} if $A$ is decidable as an object in the extensive ${\opCat{\calA}}$. Thus, $A$ is separable if, and only if, there is a morphism ${f \colon A + A \rightarrow A}$ such that the span
$$\xymatrix{
A & \ar[l]_-{\nabla} A + A \ar[r]^-{f} & A 
}$$
is a product diagram. 

\begin{theorem}\label{ThmMain} Separable MV-algebras coincide with finite products of subalgebras of $[0,1]\cap\Q$.
\end{theorem}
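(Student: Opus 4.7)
The plan is to combine the general categorical Proposition~\ref{PropMain} with the two substantive algebraic inputs already established: Theorem~\ref{t:Pierce} (coproduct preservation by the Pierce functor) and Theorem~\ref{t:superseparableNew} (the characterisation of subterminals). The whole excerpt has been architected toward this moment: Proposition~\ref{PropMain} transports a finite-coproduct decomposition theorem for decidable objects from $\calS$ to $\calC$ along a suitable adjunction $L \dashv R \colon \calS \to \calC$, so I only need to pick $\calC = \opCat{\MV}$, $\calS = \opCat{\BA}$, and feed in the dualisation of $\I \dashv \PP$ from Lemma~\ref{l:PierceRight}.

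Concretely, I take $L \coloneqq \PP^{\mathrm{op}}$ and $R \coloneqq \I^{\mathrm{op}}$. Both $\opCat{\MV}$ and $\opCat{\BA}$ are extensive since $\MV$ (Proposition~\ref{PropMVisCoextensive}) and $\BA$ are coextensive, and both have finite products (the coproducts in $\MV$ and $\BA$). I need to verify that $L$ preserves finite products and finite coproducts, and that $R$ preserves finite coproducts. The first is the dual of $\PP$ preserving finite coproducts, which is exactly Theorem~\ref{t:Pierce}; the second is the dual of $\PP$ preserving finite products, which is automatic since $\PP$ is a right adjoint. The third is the dual of $\I$ preserving finite products, which is immediate because $\BA$ is a subvariety of $\MV$, closed in it under products computed pointwise. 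Finally, to invoke Proposition~\ref{PropMain} I must check that the canonical functor $\FinSets \to \opCat{\BA}$ coincides with $\Dec{\opCat{\BA}} \to \opCat{\BA}$; via Stone duality $\opCat{\BA} \simeq \Stone$, this reduces to the elementary observation that the decidable Stone spaces are exactly the finite discrete ones.

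Proposition~\ref{PropMain} then delivers: every separable MV-algebra $A$---that is, every decidable object of $\opCat{\MV}$---is a finite coproduct in $\opCat{\MV}$, equivalently a finite product in $\MV$, of connected subterminals of $\opCat{\MV}$. By Theorem~\ref{t:superseparableNew}, a non-trivial subterminal of $\opCat{\MV}$ is precisely a subalgebra of $[0,1] \cap \Q$, and any such subalgebra is connected in $\opCat{\MV}$ because, being totally ordered, its only Boolean elements are $0$ and $1$, so it has exactly two complemented subobjects. This establishes the forward direction. For the converse, each subalgebra of $[0,1] \cap \Q$ is subterminal (Theorem~\ref{t:superseparableNew}) and hence decidable in the lextensive category $\opCat{\MV}$ (Remark~\ref{r:subt_are_dec}); since $\Dec{\opCat{\MV}}$ is closed under finite coproducts, any finite product in $\MV$ of such algebras is separable, including the empty product (the trivial MV-algebra).

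There is no real obstacle left at this stage: the genuinely difficult work---Theorem~\ref{t:Pierce} and Theorem~\ref{t:superseparableNew}---has already been carried out, and the proof is a clean assembly. The only point demanding any attention is the bookkeeping about opposites, making sure that the preservation conditions required by Proposition~\ref{PropMain} for $L$ (finite products \emph{and} finite coproducts) line up correctly with what $\PP$ is known to preserve on the $\MV$ side.
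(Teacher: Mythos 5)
Your proof is correct and follows essentially the same route as the paper: the paper likewise invokes Proposition~\ref{PropMain} for the adjunction $\pi_0 \dashv \opCat{\I}\colon \Stone \to \opCat{\MV}$ (the Stone-dual form of your $\PP^{\mathrm{op}} \dashv \I^{\mathrm{op}}$), using Theorem~\ref{t:Pierce} for the preservation hypotheses and Theorem~\ref{t:superseparableNew} to identify the connected subterminals. Your write-up simply makes explicit a few verifications the paper leaves implicit, such as the identification of $\Dec{\Stone}$ with $\FinSets$ and the converse direction.
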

\begin{proof}
By Theorem~\ref{t:Pierce} we have an  reflection ${\pi_0 \dashv \opCat{\I} \colon \Stone \rightarrow \opCat{\MV}}$ such that both adjoints preserve finite products and finite coproducts, so Proposition~\ref{PropMain} implies that every decidable object in ${\opCat{\MV}}$ is a finite coproduct of subterminal objects. Theorem~\ref{t:superseparableNew} completes the proof.
\end{proof}
We conclude the paper with some final remarks  that point to further research aimed at developing  an ‘arithmetic connected-component functor’. 
The guiding result from  Algebraic Geometry is this: the category $\calE$ of \'etale schemes over $K$ is reflective as a subcategory of that of  locally algebraic schemes over $K$ \cite[Proposition~{I, \S 4, 6.5}]{DemazureGabriel}. The left adjoint there is denoted by $\pi_0$, and  ${\pi_0 X}$ is called the {\em $k$-sch\'ema des composantes connexes de $X$} 
in Definition~{I, \S 4, 6.6} op.~cit. Moreover, it is then proved that ${\pi_0}$ preserves finite coproducts.
In terms of extensive categories, this says that for ${\calC = \opCat{\calE}}$,  the subcategory ${\Dec{\calC} \rightarrow \calC}$ has a finite-product preserving left adjoint.
We announce that the same holds for ${\calC = \opCat{\MV_{\rm fp}}}$, where $\MV_{\rm fp}$ is category of finitely presetable MV-algebras. The proof will be published elsewhere, but it is appropriate to indicate here   the r\^{o}le of locally finite MV-algebras  in connection with that result.

An MV-algebra $A$ is \emph{locally finite} if each finitely generated subalgebra of $A$ is finite. Finite MV-algebras are evidently locally finite; $[0,1]\cap \Q$ is an example of a locally finite MV-algebra that is not finite. Locally finite MV-algebras were studied in \cite{CDMadj}; see also \cite{CM} for a generalisation of the results in
\cite{CDMadj}, and \cite[Section 8.3]{MundiciAdvanced} for further material and \cite{AbbadiniSpada2022} for recent progress on the topic.
The connection with Theorem~\ref{t:superseparableNew} is the following characterisation of rational algebras.
\begin{lemma}\label{l:rational}
For any  MV-algebra $A$ the following are equivalent.
\begin{enumerate}[\textup{(}i\textup{)}]
\item\label{i:rat2} $A$ is simple and locally finite.
\item\label{i:rat3} $A$ is a subalgebra of $[0,1]\cap\Q$.
\end{enumerate}
\end{lemma}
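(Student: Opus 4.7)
The plan is to prove both implications directly. The only tools needed are H\"older's Theorem (Lemma~\ref{MVHolder}), which supplies the unique embedding of any simple MV-algebra into $[0,1]$, and the standard classification of finite MV-chains: every finite totally ordered MV-algebra with $n$ elements is isomorphic to $L_n \coloneqq \{0, 1/(n-1), 2/(n-1), \ldots, 1\} \subseteq [0,1]\cap\Q$ (see, e.g., \cite[3.5.3]{CignoliEtAlBook}). A useful consequence, obtained by applying H\"older to the simple algebra $L_n$, is that the unique embedding of $L_n$ into $[0,1]$ is precisely the inclusion of this standard rational chain.

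For (ii)$\Rightarrow$(i), assume $A$ is a non-trivial subalgebra of $[0,1]\cap\Q$. Simplicity follows from the archimedean nature of $[0,1]$: for any $0<a\in A$, the $n$-fold sum $a\oplus a\oplus\cdots\oplus a$ equals $1$ in $[0,1]$ once $n$ is sufficiently large, and hence equals $1$ in $A$ as well. Since ideals are closed under $\oplus$, any ideal of $A$ containing a non-zero element contains $1$ and thus is all of $A$. For local finiteness, each rational $m/n\in[0,1]\cap\Q$ lies in the finite chain $L_{n+1}$; finitely many rationals $q_1,\ldots,q_k$ share a common denominator $N$ (for example, the least common multiple of their individual denominators), so they all lie in $L_{N+1}$. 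Consequently any finitely generated subalgebra of $A$ is contained in some finite $L_{N+1}$ and is therefore finite.

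For (i)$\Rightarrow$(ii), assume $A$ is simple and locally finite. By H\"older's Theorem there is a unique injective homomorphism $h\colon A\hookrightarrow[0,1]$, through which I identify $A$ with $h(A)$. For each $a\in A$, the subalgebra $\langle a\rangle\subseteq A$ is finite by local finiteness. Being a subalgebra of the chain $[0,1]$, $\langle a\rangle$ is totally ordered, hence a finite MV-chain, hence isomorphic to some $L_n$. The observation at the end of the first paragraph now forces $\langle a\rangle$ to coincide with the standard rational copy of $L_n$ inside $[0,1]$, so in particular $a\in\Q$. Therefore $A\subseteq[0,1]\cap\Q$.

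No step is genuinely difficult; the only point requiring care is to avoid conflating an abstract finite MV-chain with its realisation as a subset of $[0,1]$, a distinction that H\"older's uniqueness immediately collapses.
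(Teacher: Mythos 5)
Your proof is correct, and in the direction (i)$\Rightarrow$(ii) it takes a genuinely different route from the paper's. The paper also begins by using H\"older's Theorem to identify the simple algebra $A$ with a subalgebra of $[0,1]$, but then argues contrapositively: if $A$ contained an irrational $\rho$, the Euclidean algorithm of successive subtractions applied to $\rho$ and $1$ would fail to terminate (by incommensurability) and would produce infinitely many distinct elements of the subalgebra generated by $\rho$, contradicting local finiteness. You instead argue positively: the subalgebra generated by $a$ is a finite chain, hence isomorphic to some $L_n$, and H\"older's uniqueness forces its copy inside $[0,1]$ to be the standard rational chain, so $a\in\Q$. Your version trades the hands-on Euclidean-algorithm computation for the classification of finite MV-chains plus a second application of H\"older uniqueness (to the simple algebra $L_n$); both ingredients are standard, and the resulting argument is arguably cleaner, though it leans on slightly more imported theory. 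In the direction (ii)$\Rightarrow$(i) the two proofs are essentially the same: simplicity via the Archimedean property, and local finiteness because finitely many rationals with common denominator $N$ generate a subalgebra inside the finite chain $\{0,1/N,\dots,1\}$ --- the paper phrases this via discreteness of the finitely generated subgroup of $\R$ and a citation, while you make the common denominator explicit. One small point worth making explicit if you write this up: the finite chains $L_n$ are simple (any nonzero element generates the improper ideal under $\oplus$), which is what licenses the application of H\"older's Theorem to $L_n$ in your key step.
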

\begin{proof}$\eqref{i:rat2}\Rightarrow\eqref{i:rat3}$. By H\"older's Theorem (Lemma \ref{MVHolder}), since $A$ is simple there is exactly one monomorphism $A\to [0,1]$; let us therefore identify $A$ with a subalgebra of $[0,1]$. If $A$ contains an irrational number $\rho\in [0,1]$ then the subalgebra generated by $\rho$ is infinite. Indeed, the  Euclidean algorithm of successive subtractions applied  to $\rho,1\in \R$ does not terminate (because $\rho$ and $1$ are incommensurable) and produces an infinite descending sequence of distinct, non-zero elements of $A$. Thus, $A \subseteq [0,1]\cap\Q$ by local finiteness.

\noindent $\eqref{i:rat3}\Rightarrow\eqref{i:rat2}$. Any subalgebra of $[0,1]$  evidently has no proper non-trivial ideal, by the Archimedean property of the real numbers, and is therefore simple. If, moreover, $A\subseteq [0,1]\cap\Q$,    the subgroup of $\R$ generated by finitely many $a_{1},\ldots,a_{n}\in A$ together with $1$ is discrete, and therefore by \cite[3.5.3]{CignoliEtAlBook} the subalgebra generated by $a_{1},\ldots,a_{n}$ is a finite chain. Thus $A$ is locally finite.
\end{proof}
\begin{corollary}\label{CorCharSepInTermsOfLF} An MV-algebra $A$ is separable if, and only if, $A$ is locally finite and ${\PP\! A}$ is finite.
\end{corollary}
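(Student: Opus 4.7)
The plan is to reduce the claim to Theorem \ref{ThmMain} and Lemma \ref{l:rational}, using that the Pierce functor $\PP$ preserves finite products (it is a right adjoint by Lemma \ref{l:PierceRight}, and Theorem \ref{t:Pierce} gives considerably more).

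For the forward implication, Theorem \ref{ThmMain} yields $A \cong \prod_{i=1}^n A_i$ with each $A_i$ a subalgebra of $[0,1]\cap\Q$. Each $A_i$ is locally finite by Lemma \ref{l:rational}, and local finiteness is closed under finite products (a finite subset of $\prod A_i$ generates a subalgebra contained in the product of the finite subalgebras of each $A_i$ generated coordinatewise). Moreover, each $A_i$ is linearly ordered, so its only Boolean elements are $0$ and $1$; since $\PP$ preserves finite products, $\PP A \cong \{0,1\}^n$ is finite.

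For the converse, assume $A$ is locally finite with $\PP A$ finite. The atoms $b_1, \ldots, b_n$ of the finite Boolean algebra $\PP A$ are pairwise-disjoint Boolean elements of $A$ summing to $1$; since each $b_i$ is Boolean, the ideals $\langle \neg b_i\rangle$ are pairwise comaximal with intersection $\{0\}$, so the Chinese Remainder Theorem (\cite[Lemme 10.6.3]{BKW}) yields $A \cong \prod_{i=1}^n A_i$ with $A_i \coloneqq A/\langle \neg b_i\rangle$. Each $A_i$ is a quotient of $A$, hence locally finite; and product-preservation of $\PP$ together with the atom count forces $\PP A_i = \{0,1\}$, so that $A_i$ is indecomposable. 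It therefore suffices to show that any indecomposable locally finite MV-algebra is a subalgebra of $[0,1]\cap\Q$, whereupon Theorem \ref{ThmMain} concludes.

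For this last step, let $B$ be any finite subalgebra of $A_i$. The inclusion preserves Boolean elements, so $\PP B \hookrightarrow \PP A_i = \{0,1\}$ must be an isomorphism. The classification of finite MV-algebras as finite products of finite chains (\cite{CignoliEtAlBook}) then forces $B$ to be a single finite chain. Hence $A_i$ is the directed union of its finite chain subalgebras and is thus linearly ordered; and given any non-zero $a\in A_i$ and arbitrary $b\in A_i$, both lie in a common finite chain subalgebra, within which $na = 1 \geq b$ for $n$ large enough, so $\langle a\rangle = A_i$. Thus $A_i$ is simple, and Lemma \ref{l:rational} identifies it with a subalgebra of $[0,1]\cap\Q$. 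The main obstacle is precisely this final structural analysis showing that indecomposable locally finite MV-algebras are chains, which rests on subalgebra inclusions preserving Boolean elements and on the classification of finite MV-algebras.
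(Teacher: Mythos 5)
Your proof is correct, and the forward direction matches the paper's almost verbatim (Theorem \ref{ThmMain}, Lemma \ref{l:rational}, closure of local finiteness under finite products, finiteness of $\PP A$). The converse, however, takes a genuinely different route at the crucial step. Both you and the paper reduce to showing that a directly indecomposable locally finite MV-algebra is simple, you via an explicit Chinese-Remainder decomposition along the atoms of $\PP A$, the paper by simply invoking the standard decomposition into indecomposable factors. But where the paper then argues topologically---Corollary \ref{c:jacobson} gives connectedness of $\Spec A_i$, the cited result that $\Spec = \Max$ for locally finite algebras makes this spectrum a Stone space, and a connected Stone space is a point---you argue purely algebraically: every finite subalgebra $B$ of $A_i$ has $\PP B$ trivial because Boolean elements are preserved by the inclusion into $A_i$, so by the classification of finite MV-algebras as finite products of finite chains $B$ must be a single chain; the directed union of chains is totally ordered, and in a finite chain every non-zero element is a unit for the ideal it generates, whence $A_i$ is simple. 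Your argument is more elementary and self-contained, trading the external spectral fact $\Spec = \Max$ for locally finite algebras (from the Cignoli--Dubuc--Mundici paper) for the elementary structure theory of finite MV-algebras; the paper's argument is shorter on the page but leans on more machinery. Both are sound.
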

\begin{proof}
If $A$ is separable then, by Theorem~\ref{ThmMain}, ${A = \prod_{i\in I} A_i}$ with $I$ finite and  ${A_i \subseteq   [0,1]\cap\Q}$ for each ${i \in I}$.
In particular, ${\PP A}$ is finite.
Also, each $A_i$  is locally finite by Lemma~\ref{l:rational}.
As finite products of locally finite algebras are locally finite,  $A$ is locally finite.
Conversely, assume that $A$ is locally finite and ${\PP A}$ is finite.
Then, ${A = \prod_{i\in I} A_i}$ with $I$ finite and  $A_i$  directly indecomposable for each ${i \in I}$.
As locally finite algebras are closed under quotients,  each $A_i$  is locally finite.
Hence, each $A_i$ is locally finite and  indecomposable.
But then $A$ must be simple. Indeed, Corollary \ref{c:jacobson} entails that $\Spec{A}$ is connected, and $\Spec{A}=\Max{A}$ by \cite[Theorem 5.1]{CDMadj}. Then the spectral space $\Spec{A}$ is Hausdorff, and thus has a base of clopen sets---hence, being compact, it is a Stone space. Since Stone spaces are  totally disconnected, connectedness of $\Spec{A}$  entails that $\Spec{A}$ is a singleton, so $A$ has exactly two ideals, and so is simple.
By Lemma~\ref{l:rational}, $A$ is then a subalgebra of  ${[0,1]\cap\Q}$. Therefore,  $A$ is separable by  Theorem~\ref{ThmMain}.
\end{proof}
Now, let ${\LF \rightarrow \MV}$ be the full subcategory determined by locally finite MV-algebras.
 Let us prove that this subcategory  is coreflective.

An element $a$ of an MV-algebra $A$ is \emph{of finite order-rank}\footnote{The terminology we introduce here is best motivated using lattice-groups---please see Appendix \ref{a:l-groups}.} if the subalgebra $B$ it generates in $A$ is finite. If $B$ is terminal, we say the order-rank of $a$ is zero. Otherwise, there exists exactly one $n\in \{1,2,\ldots\}$ such that $B=C_1\times \cdots\times C_n$ with each $C_i$ directly indecomposable and non-terminal, and we then say the order-rank of $a$ is $n$. We set
\[
{\RR}A\coloneqq\{a \in A \mid a \text{ is of finite order-rank}\}.
\]
Note that ${\PP}A\seq {\RR}A$, because any Boolean algebra is locally finite.
For any MV-algebra $A$ and subset $G\seq A$, let us write ${\Sa}{G}$ for the subalgebra of $A$ generated by $G$. When $G=\{g\}$ we write ${\Sa}g$ for ${\Sa}\{g\}$. 
\begin{lemma}\label{l:Risafunctor}Any homomorphism of MV-algebras sends elements of finite order-rank to elements of finite order-rank.
\end{lemma}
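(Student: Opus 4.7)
The plan is to observe that this lemma is an immediate consequence of the fact that homomorphic images of finitely generated subalgebras are finitely generated, combined with the elementary categorical fact that homomorphisms preserve the operation of taking the subalgebra generated by an element.

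More precisely, let $h \colon A \rightarrow A'$ be a homomorphism of MV-algebras, and suppose $a \in A$ has finite order-rank, meaning by definition that the subalgebra ${\Sa}a \subseteq A$ is finite. First I would note that the image $h[{\Sa}a]$ is a subalgebra of $A'$, because homomorphic images of subalgebras are subalgebras in any variety. Since ${\Sa}a$ is finite, $h[{\Sa}a]$ is finite as well. Since $h(a) \in h[{\Sa}a]$ and ${\Sa}(h(a))$ is by definition the \emph{smallest} subalgebra of $A'$ containing $h(a)$, we have the inclusion ${\Sa}(h(a)) \subseteq h[{\Sa}a]$. Consequently ${\Sa}(h(a))$ is finite, which is exactly the statement that $h(a)$ has finite order-rank.

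The only step deserving a brief remark is the edge case in which ${\Sa}a$ is terminal (order-rank zero); but then $A$ itself is terminal, forcing $A'$ to be terminal too (any MV-homomorphism preserves $0$ and $1$, so a non-trivial codomain cannot receive a map from the one-element algebra), and the conclusion is vacuous. There is no real obstacle here: the lemma is a routine preservation statement, and its inclusion in the paper is presumably to set up the functoriality of the assignment $A \mapsto {\RR}A$ that is implicitly being introduced. Indeed, once the lemma is proved, the restriction of $h$ to ${\RR}A$ factors through ${\RR}A'$, so ${\RR}$ extends to an endofunctor of $\MV$.
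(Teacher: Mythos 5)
Your proof is correct and follows essentially the same route as the paper: both arguments reduce the claim to the observation that $h$ carries the finite subalgebra ${\Sa}a$ onto a finite subalgebra of the codomain containing $h(a)$ (the paper states the equality $h[{\Sa}a]={\Sa}(ha)$, while you use only the inclusion ${\Sa}(ha)\subseteq h[{\Sa}a]$, which suffices). The edge-case remark about order-rank zero is harmless but unnecessary, since the finiteness argument applies uniformly.
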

\begin{proof}Let $h\colon A\to B$ be a homomorphism and let $a \in {\RR}A$. Since $h$ commutes with operations, a routine argument in general algebra shows that $h[{S}a]=\Sa{(ha)}$; since ${\Sa}a$ is finite, so is $\Sa{(ha)}$.
\end{proof}
\begin{lemma}\label{l:Rissubalgebra}For any MV-algebra $A$, ${\RR}A$ is a locally finite subalgebra of $A$. Further, ${\RR}A$ is the inclusion-largest locally finite subalgebra of $A$.
\end{lemma}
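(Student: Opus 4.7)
The plan is three-staged: first establish that $\RR A$ is a subalgebra of $A$, then show it is locally finite, and finally confirm it contains every locally finite subalgebra of $A$.

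For the subalgebra claim, note that $0 \in \Sa{0} \subseteq \{0,1\}$, which is finite, so $0 \in \RR A$. If $a \in \RR A$, then $\neg a \in \Sa a$ entails $\Sa{(\neg a)} \subseteq \Sa a$, which is finite as a subalgebra of a finite set. The remaining primitive operation to address is $\oplus$. For $a, b \in \RR A$, I would reduce the task to showing that $\Sa{\{a, b\}}$ is finite whenever $\Sa a$ and $\Sa b$ are: indeed $a \oplus b \in \Sa{\{a,b\}}$, so $\Sa{(a \oplus b)} \subseteq \Sa{\{a,b\}}$ is then automatically finite. Since $\Sa{\{a,b\}}$ is the image of the canonical homomorphism $\Sa a + \Sa b \to A$ induced by the inclusions $\Sa a, \Sa b \hookrightarrow A$ via the universal property of the MV-coproduct, it suffices to show that the MV-coproduct of two finite MV-algebras is finite. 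This rests on three standard ingredients: the structure theorem for finite MV-algebras (each is a finite product of finite \L ukasiewicz chains), the distributivity identity $(X \times Y) + Z \cong (X+Z) \times (Y+Z)$ available in any coextensive category, and the explicit computation via the Mundici equivalence $\Gamma$ that the coproduct of two finite chains is again a finite chain.

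Local finiteness of $\RR A$ then follows by an immediate induction: any finitely generated subalgebra of $\RR A$ has the form $\Sa{\{a_1, \ldots, a_k\}}$ with each $a_i \in \RR A$, and iterating the two-generator case shows it is finite. For maximality, let $B \subseteq A$ be any locally finite subalgebra and $b \in B$. The subalgebra of $B$ generated by $b$ coincides, as a subset of $A$, with $\Sa b$; by local finiteness of $B$ it is finite, so $b \in \RR A$, and thus $B \subseteq \RR A$.

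The main obstacle is the subordinate fact that the MV-coproduct of two finite MV-algebras is finite. No single ingredient of the argument is deep, but combining them requires some care with coproduct calculations in $\MV$ (or equivalently, in the category of unital Abelian $\ell$-groups via the Mundici equivalence).
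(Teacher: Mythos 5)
Your proposal is correct and follows essentially the same route as the paper: both arguments reduce everything to the fact that a finite coproduct of finite MV-algebras is finite, realise $\Sa{\{a,b\}}$ (and more generally $\Sa{F}$) as a quotient of a coproduct of the singly generated subalgebras, and obtain maximality by noting that any element of a locally finite subalgebra generates a finite subalgebra of $A$. The only difference is that the paper simply cites \cite[Corollary~{7.9}(iii)]{MundiciAdvanced} for the finiteness of coproducts of finite MV-algebras, whereas you sketch a (valid) proof of that fact via the structure theorem for finite MV-algebras, distributivity of $+$ over $\times$ in a coextensive category, and the computation $Ł_m + Ł_n \cong Ł_{\operatorname{lcm}(m,n)}$ for finite chains.
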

\begin{proof}Let $F\coloneqq\{a_1,\ldots,a_n\}\seq A$ be a finite subset of elements of finite order-rank, $n\geq 0$ an integer. We need to show that the subalgebra ${\Sa}F$ of $A$ generated by $F$ is finite. Induction on $n$. If $n=0$ then ${\Sa}\emptyset$ is either the terminal one-element algebra or the initial two-element algebra. Now suppose $G\coloneqq\{a_1,\ldots, a_{n-1}\}$ is such that ${\Sa}G$ is finite. The subalgebra ${\Sa}a_n$ is also finite, because $a_n$ is of finite order-rank by hypothesis. The subalgebra ${\Sa}F$ is the least upper bound of ${\Sa}G$ and of ${\Sa}a_n$ in the lattice of subalgebras of $A$, and therefore can be written as a quotient of the coproduct  ${\Sa}G+{\Sa}a_n$. In more detail, by the universal property of the coproduct, the inclusion maps ${\Sa}G\seq {\Sa}F$ and ${\Sa}a_n\seq {\Sa}F$ induce a unique homomorphism $h\colon  {\Sa}G+{\Sa}a_n\to A$ whose regular-epi/mono factorisation $h=m q$ is such that $m\colon S\to A$ exhibits the subobject of $A$ that is the join of the subobjects ${\Sa}G$ and ${\Sa}a_n$---in particular, $S$ is isomorphic to ${\Sa}F$. So ${\Sa}F$ is a quotient of the  algebra ${\Sa}G+{\Sa}a_n$. Since finite coproducts of finite MV-algebras are finite by \cite[Corollary 7.9(iii)]{MundiciAdvanced}, ${\Sa}G+{\Sa}a_n$ is finite and therefore so is ${\Sa}F$.

To show that ${\RR}A$ is a subalgebra of $A$, first note that clearly $0\in{\RR}A$. If $a\in {\RR}A$ then $\neg a$ lies in the subalgebra generated by $a$, which is finite;  hence $\neg a$ is of finite order-rank. If $a,b \in {\RR}A$, then $a\oplus b$ lies in the subalgebra generated by $\{a,b\}$, which is finite by the argument in the preceding paragraph; hence $a\oplus b$ is of finite order-rank.

For the last assertion in the statement, let $B$ be a locally finite subalgebra of $A$. Given any $b \in B$, the subalgebra generated by $b$ in $A$ is finite, by our assumption about $B$; hence $b$ is of finite order-rank, and ${b\in \RR A}$. This completes the proof.
\end{proof}
Lemmas \ref{l:Risafunctor} and \ref{l:Rissubalgebra} allow us to regard $\RR$ as a functor
\[
\RR\colon \MV\longrightarrow \LF.
\]
\begin{corollary}\label{c:rightadjlf}
The functor $\RR\colon \MV\longrightarrow \LF$ is right adjoint to the full inclusion $\LF\longrightarrow \MV$.
\end{corollary}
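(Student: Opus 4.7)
The plan is to exhibit the inclusion $\RR A\hookrightarrow A$ as the counit of the adjunction and verify the usual universal property.  Given any locally finite $L$ and any homomorphism $f\colon L\to A$, I intend to show that $f$ factors uniquely through $\RR A\hookrightarrow A$; this is precisely what is needed for $\iota\dashv\RR$, because full faithfulness of $\iota$ then turns such factorisations into a natural bijection $\MV(\iota L,A)\cong\LF(L,\RR A)$.

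Concretely, first I would observe that for any $f\colon L\to A$ with $L$ locally finite, the set-theoretic image $f[L]\subseteq A$ is a subalgebra of $A$, and moreover $f[L]$ is itself locally finite.  The latter is a general-algebra fact: if $F\subseteq f[L]$ is finite, pick a finite preimage $F'\subseteq L$ with $f[F']=F$; then $\Sa F'$ is finite by hypothesis on $L$, so $\Sa F=f[\Sa F']$ is finite too.  By the maximality clause in Lemma~\ref{l:Rissubalgebra}, $f[L]\subseteq \RR A$, so $f$ factors through the inclusion $\epsilon_A\colon\RR A\hookrightarrow A$ via a (necessarily unique, since $\epsilon_A$ is monic) homomorphism $\tilde f\colon L\to \RR A$.

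Next I would check naturality.  Given $g\colon A\to A'$ in $\MV$, Lemma~\ref{l:Risafunctor} ensures that $g$ restricts to $\RR g\colon \RR A\to \RR A'$, and by construction the square $\epsilon_{A'}\circ\RR g=g\circ \epsilon_A$ commutes; this makes $\epsilon\colon\iota\RR\Rightarrow\id{\MV}$ a natural transformation.  The compatibility of $\tilde{(-)}$ with composition in $L$ and post-composition with $g$ then follows automatically from the uniqueness part of the factorisation.  Together with the obvious unit $\eta_L=\id L\colon L\to \RR\iota L=\RR L=L$ (the last equality holding because $L$ is locally finite, again by Lemma~\ref{l:Rissubalgebra}), the triangle identities are immediate, establishing $\iota\dashv\RR$.

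The only non-cosmetic step is the stability of local finiteness under homomorphic images, but this is routine as sketched above, so I do not foresee a real obstacle.
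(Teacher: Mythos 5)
Your proposal is correct and is essentially the paper's own argument, just written out in full: the paper dismisses the corollary as an immediate consequence of $\RR A$ being the largest locally finite subalgebra (Lemma~\ref{l:Rissubalgebra}), and your factorisation of any $f\colon L\to A$ through $\RR A$ via the locally finite image $f[L]$ (equivalently, via Lemma~\ref{l:Risafunctor}) is precisely the content being elided. The naturality and triangle-identity checks you add are routine and correct.
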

\begin{proof}This is an immediate consequence of the fact that ${\RR}A$ is the largest locally finite subalgebra of the MV-algebra $A$, as proved in Lemma \ref{l:Rissubalgebra}.
\end{proof}

\begin{remark}\label{r:productslocfin}It is proved in \cite[Theorem 8.10]{MundiciAdvanced} that $\LF$ has all set-indexed products. This follows at once from Corollary \ref{c:rightadjlf}: indeed, for any set-indexed family $\{A_i\}_{i \in I}$ of locally finite MV-algebras the product of  $\{A_i\}_{i \in I}$ in $\LF$ is the coreflection $\RR{(\prod_{i \in I}A_i)}$ of the product $\prod_{i \in I}A_i$ in $\MV$.
\end{remark}

 We have been unable to prove that ${\opCat{\RR} \colon \opCat{\MV} \rightarrow \opCat{\LF}}$ preserves finite products. However,  writing ${\calC}$ for ${ \opCat{\MV_{\rm fp}}}$, we can show that the functor ${\opCat{\RR} }$ restricts to a left adjoint ${\pi_0 \colon \calC \rightarrow \Dec \calC}$ to the inclusion
 ${\Dec\calC \rightarrow \calC}$ and, moreover, it  preserves finite products.
As mentioned, the proof  will appear elsewhere.
 
\appendix\section{Separable unital lattice-ordered Abelian groups} \label{a:l-groups}
For background on lattice-groups we refer to \cite{BKW}. We recall that a \emph{lattice-ordered group}, or \emph{$\ell$-group} for short, is a group that is also a lattice\footnote{In this appendix, lattices are only required to have binary meets and joins, but not  top or bottom elements.} such that the group operation distributes over binary meets and joins. We only consider Abelian $\ell$-groups, and thus adopt additive notation. The underlying group of an Abelian $\ell$-group is torsion-free, and its underlying lattice is distributive. Write $\lA$ for the category of Abelian $\ell$-groups and of lattice-group homomorphisms. An element $1\in G$ in an Abelian $\ell$-group is a (\emph{strong order}) \emph{unit} if for each $g\in G$  there is a natural number $n$ such that $n1\geq g$. An Abelian $\ell$-group $G$ equipped with a distinguished unit $1$ is called \emph{unital}, and denoted $(G,1)$. Write $\lA_1$ for the category of unital Abelian $\ell$-groups and of unit-preserving lattice-group homomorphisms.

There is a functor $\Gamma\colon \lA_1\to \MV$ that acts on objects by sending $(G,1)$ to its unit interval $[0,1]\coloneqq\{x\in G\mid 0\leq x \leq 1\}$, and on morphisms by restriction; here, $[0,1]$ is regarded as an MV-algebra under the operations $x\oplus y\coloneqq (x+y)\wedge 1$, $\neg x\coloneqq 1-x$, and $0$. This functor has an adjoint $\Xi\colon\MV\to\lA_1$, and Mundici proved in \cite{Mundici86} that $\Gamma$ and $\Xi$  constitute an equivalence of categories.

The initial object in $\lA_1$ is $(\Z,1)$, and the terminal object is the trivial unital $\ell$-group $(\{0=1\}, 0)$.
In analogy with the relationship between non-unital and unital rings, the category $\lA$ has a zero object and is not coextensive, while the category $\lA_1$ is. Separable unital Abelian $\ell$-groups are defined as for any coextensive category, cf.\ the beginning of Section \ref{s:main}.

An object  $G$ of $\lA$ is \emph{Archimedean} if   whenever $nx\leq y$ holds in $G$ for each positive integer $n$, then  $x\leq 0$; and an object $(G,1)$ of $\lA_1$ is called Archimedean if $G$ is. The following characterisations hold: $(G,1)$ is Archimedean precisely when $\Gamma(G,1)$ is semisimple; and $(G,1)$ is totally ordered and Archimedean precisely when $\Gamma(G,1)$ is simple. H\"older's Theorem for the category $\lA_1$ may be stated as follows: \emph{Any $(G,1)$ that is Archimedean and totally ordered has exactly one morphism to $(\R,1)$, and that morphism is monic} (equivalently, its underlying function is injective). 

Let us say that an object $(G,1)$ of $\lA_1$ is \emph{rational} if it is isomorphic to an ordered subgroup
of the additive group $\Q$ containing $1$, where the order of $G$ is inherited from the natural order of the rationals. Theorem \ref{t:superseparableNew} may be then formulated for the category $\lA_1$ as follows.
\begin{theorem}\label{t:ell-superseparableNew} For any  unital Abelian $\ell$-group $(G,1)$ the following are equivalent.
\begin{enumerate}[\textup{(}i\textup{)}]
\item\label{i:ellnew3} $(G,1)$ is rational.
\item $(G,1)$ is non-trivial, and the unique map $(\Z,1) \rightarrow (G,1)$ is  epic.
\item\label{i:ellnew2} The unique map $(\Z,1) \rightarrow (G,1)$ is monic and epic.
\item $(G,1)$ is totally ordered and Archimedean, and the unique map $(\Z,1) \rightarrow (G,1)$ is  epic.
\end{enumerate}
\end{theorem}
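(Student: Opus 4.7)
The plan is to transport Theorem~\ref{t:superseparableNew} from $\MV$ to $\lA_1$ via Mundici's equivalence $\Gamma \colon \lA_1 \to \MV$, exploiting the fact that equivalences of categories preserve and reflect every categorically-definable notion.

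First I would set up the necessary dictionary. Since any equivalence preserves and reflects initial and terminal objects, monomorphisms, and epimorphisms, the initial object $(\Z,1) \in \lA_1$ is sent by $\Gamma$ to the initial MV-algebra $\{0,1\}$, and $(G,1)$ is non-trivial precisely when $\Gamma(G,1)$ is non-trivial. Likewise, a morphism in $\lA_1$ is monic (resp.\ epic) if, and only if, its $\Gamma$-image is monic (resp.\ epic). The appendix already records that $(G,1)$ is totally ordered and Archimedean if, and only if, $\Gamma(G,1)$ is simple.

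Next I would verify the key correspondence that $(G,1)$ is rational if, and only if, $\Gamma(G,1)$ is a subalgebra of $[0,1]\cap\Q$. Since $\Gamma(\Q,1) = [0,1]\cap\Q$, any embedding $(G,1) \hookrightarrow (\Q,1)$ yields under $\Gamma$ a monomorphism $\Gamma(G,1) \hookrightarrow [0,1]\cap\Q$. Conversely, if $A \seq [0,1]\cap\Q$ is a subalgebra, applying the inverse equivalence $\Xi$ produces a monomorphism $\Xi(A)\hookrightarrow \Xi([0,1]\cap\Q) \cong (\Q,1)$, and hence $(G,1)\cong\Xi\Gamma(G,1)\cong\Xi(A)$ is rational.

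With this dictionary in hand, each of the four items in Theorem~\ref{t:ell-superseparableNew} corresponds under $\Gamma$ to the item bearing the same numeral in Theorem~\ref{t:superseparableNew}, so their mutual equivalence is inherited at once. The main---still rather mild---obstacle is the explicit identifications $\Gamma(\Q,1) = [0,1]\cap\Q$ and $\Xi([0,1]\cap\Q) \cong (\Q,1)$; both follow from the construction of $\Xi$ via good sequences, which realises the unital $\ell$-group freely generated by an MV-algebra, and in the case of $[0,1]\cap\Q$ gives back the rationals with their natural unit.
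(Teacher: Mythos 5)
Your proposal is correct and matches the paper's own (implicit) argument: the appendix presents Theorem~\ref{t:ell-superseparableNew} precisely as the transport of Theorem~\ref{t:superseparableNew} along Mundici's equivalence $\Gamma\dashv\Xi$, relying on the same dictionary you spell out (initial objects $(\Z,1)\leftrightarrow\{0,1\}$, preservation and reflection of monos and epis, totally ordered Archimedean $\leftrightarrow$ simple, rational $\leftrightarrow$ subalgebra of $[0,1]\cap\Q$). The only part the paper leaves entirely tacit and you make explicit is the identification $\Gamma(\Q,1)=[0,1]\cap\Q$ and its converse via $\Xi$, which is a worthwhile addition but not a departure in method.
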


An object $(G,1)$ of $\lA_1$ is \emph{Specker} if its unit-interval MV-algebra $\Gamma(G,1)$ is a Boolean algebra. Write $\Speck_1$ for the full subcategory of $\lA_1$ on the the Specker objects. The inclusion functor $\Speck_1\to\lA_1$ has a right adjoint $\PP\colon \lA_1\to\Speck_1$, the \emph{Pierce functor} for $\lA_1$, and $\PP$ preserves arbitrary coproducts (Theorem \ref{t:Pierce}). Our main result, Theorem \ref{ThmMain}, would be proved for the category $\lA_1$ using this Pierce functor; it can  be phrased as follows.

\begin{theorem}\label{ell-ThmMain} Separable unital Abelian $\ell$-groups coincide with finite products of rational unital Abelian $\ell$-groups.
\end{theorem}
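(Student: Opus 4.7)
The plan is to deduce Theorem~\ref{ell-ThmMain} from its MV-algebraic counterpart, Theorem~\ref{ThmMain}, via Mundici's equivalence $\Gamma \colon \lA_1 \to \MV$ with quasi-inverse $\Xi$. Because $\Gamma$ is an equivalence of categories, it preserves and reflects every categorical notion. In particular, it transports coextensivity from $\MV$ (Proposition~\ref{PropMVisCoextensive}) to $\lA_1$, preserves and reflects finite products and finite coproducts, and, consequently, preserves and reflects separability: a unital Abelian $\ell$-group $(G,1)$ is separable in $\lA_1$ if, and only if, $\Gamma(G,1)$ is separable in $\MV$.

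Granted this, the proof reduces to matching the two classes of building blocks across the equivalence. First I would check that $(G,1)$ is rational in the sense of the appendix if, and only if, $\Gamma(G,1)$ is a subalgebra of $[0,1]\cap\Q$. One direction is immediate: if $(G,1)$ sits as an ordered subgroup of $(\Q,1)$, then $\Gamma(G,1) = G \cap [0,1]$ is a subalgebra of $[0,1]\cap\Q$. For the converse, $\Xi$ sends any subalgebra $A \subseteq [0,1]\cap\Q$ to the Archimedean, totally ordered, unital $\ell$-group it generates, which, by H\"older's Theorem for $\lA_1$, embeds (uniquely) into $(\R,1)$; since this embedding maps $A$ into $\Q$, the image lies inside the ordered subgroup of $\Q$ generated by $A$, so $\Xi(A)$ is rational. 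Equivalently, this matching is precisely the equivalence \eqref{i:ellnew3}$\Leftrightarrow$(i) of Theorem~\ref{t:ell-superseparableNew}.

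The structural characterisation now transfers without difficulty. Suppose $(G,1)$ is separable in $\lA_1$. Then $\Gamma(G,1)$ is separable in $\MV$, hence by Theorem~\ref{ThmMain} it is isomorphic to a finite product $\prod_{i\in I} A_i$ of subalgebras $A_i \subseteq [0,1]\cap\Q$. Applying $\Xi$ and using that $\Xi$ preserves finite products (as part of the equivalence) gives $(G,1) \cong \prod_{i\in I} \Xi(A_i)$ in $\lA_1$, where each $\Xi(A_i)$ is rational by the previous paragraph. Conversely, any finite product in $\lA_1$ of rational unital Abelian $\ell$-groups is sent by $\Gamma$ to a finite product in $\MV$ of subalgebras of $[0,1]\cap\Q$, which is separable by Theorem~\ref{ThmMain}; hence $(G,1)$ itself is separable.

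The only point demanding vigilance---rather than a real obstacle---is keeping track of the categories in which products and coproducts are computed. In particular, finite products in $\lA_1$ must be taken in $\lA_1$ (where the unit of a product is the tuple of units), not in $\lA$, which carries a zero object and is not coextensive; dually, the same care is required for coproducts. Once this is fixed, the transfer through $\Gamma$ and $\Xi$ is routine and the theorem follows.
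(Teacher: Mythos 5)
Your proposal is correct and matches the paper's approach: the appendix explicitly presents Theorem~\ref{ell-ThmMain} as the translation of Theorem~\ref{ThmMain} through Mundici's equivalence $\Gamma\dashv\Xi$, with rational unital $\ell$-groups corresponding to subalgebras of $[0,1]\cap\Q$ exactly as in Theorem~\ref{t:ell-superseparableNew}, and your caveat about finite products being computed in $\lA_1$ (where they are Cartesian) is the same point the paper makes in Remark~\ref{r:ell-prod}.
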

\begin{remark}\label{r:ell-prod}Products in the category $\lA$ are Cartesian products, because $\lA$ is a variety of algebras. On the other hand, while $\lA_1$ is equivalent to a variety by Mundici's cited theorem,  its underlying-set functor is not right adjoint. Indeed, products in $\lA_1$ are not, in general, Cartesian products. However, finite products in $\lA_1$ \emph{are} Cartesian---the product of $(G,1)$ and $(H,1)$ is $(G\times H, (1,1))$ with the Cartesian projections. 
\end{remark}

An Abelian $\ell$-group is called a \emph{simplicial group} if it is isomorphic in $\lA$ to a free Abelian group of finite rank $\Z^r$ equipped with the coordinatewise order. A unit in such a simplicial group is then any element $1\in \Z^r$ whose each coordinate is strictly positive; the pair $(\Z^r,1)$ is called a \emph{unital simplicial group}. These lattice-groups play a key r\^{o}le in the representation theory of dimension groups, see e.g.\ \cite{Goodearl1986}.

An object $(G,1)$ in $\lA_1$ is a unital simplicial group exactly when its unit-interval MV-algebra $\Gamma(G,1)$ is finite.  An object $(G,1)$ is \emph{locally simplicial} if each sublattice subgroup generated by finitely many elements along with $1$ is a unital simplicial group. An object $(G,1)$ in $\lA_1$ is locally simplicial exactly when its unit-interval MV-algebra $\Gamma(G,1)$ is locally finite. Then: \emph{An object $(G,1)$ of $\lA_1$ is separable just  when it is locally simplicial, and $\PP(G,1)$ has finite \textup{(}$\Z$-module\textup{)} rank}\footnote{In the literature on lattice-groups, the condition that $\PP(G,1)$ has finite rank is expressed in  the following traditional manner: the unit of $G$ has finitely many components.} (Corollary \ref{CorCharSepInTermsOfLF}).

Write $\LSimp_1$ for the full subcategory of $\lA_1$ on the locally simplicial objects. \emph{The inclusion functor $\LSimp_1\to\lA_1$ has a right adjoint $\RR\colon \lA_1\to\LSimp_1$} (Corollary \ref{c:rightadjlf}); that is, every $(G,1)$ has an inclusion-largest locally simplicial unital sublattice subgroup. To prove this in the category $\lA_1$ one would introduce the notion of element of `finite-order rank' of a unital Abelian $\ell$-group. It is this notion that motivates the ter\-mi\-no\-logy   we adopted in the context of MV-algebras in Section \ref{s:main}; by way of conclusion of this appendix, we offer a short discussion.

Let $(G,1)$ be a unital Abelian $\ell$-group, let $g\in G$, and let $H$ be the sublattice subgroup of $G$ generated by $g$ and by $1$. If $(H,1)$ is a unital simplicial group $(\Z^r,1)$---equivalently, if the MV-algebra $\Gamma(H,1)$ is finite---then we call $g$ an element of \emph{finite order-rank $r$}. This notion of rank crucially depends on the interplay between the lattice and the group structure, and is not reducible to the linear notion of rank. To explain why, let us preliminarly observe  that a simplicial group $\Z^r$ enjoys the finiteness property that its positive cone $(\Z^r)^+$---that is, the monoid of non-negative elements of $\Z^r$---is finitely generated as a monoid. Next, let us point out  that  the underlying  group of the Abelian $\ell$-group $H$ generated by $g$ and $1$ in $G$ is necessarily free: indeed,  any finitely generated object of $\lA$ has free underlying group, as was proved in \cite{GlassMarra}. The $\Z$-module rank of $H$ is at most countably infinite, because $H$ is countable. But even if we assume the rank of $H$ is finite, the unit-interval $\Gamma(H,1)$ may be infinite, and in that case  the lattice order of $\Z^r\cong H$ cannot be simplicial---and indeed, one can prove that the monoid $H^+$ cannot be finitely generated. Hence, the condition that  the sublattice subgroup $H$ of $G$ generated by $g$ and $1$ is simplicial is strictly stronger than the condition that $H$ has finite $\Z$-module rank. To illustrate, consider the subgroup $H$ of $\R$ generated by an irrational number $\rho\in \R$ together with $1$; then $H\cong \Z^2$ as groups, the total order inherited by $\Z^2$ from $\R$ is palpably not simplicial, the positive cone $H^+$ can be shown not to be  finitely generated by an easy direct argument, and $\Gamma(H,1)$ is an infinite simple MV-algebra.

\bibliography{biblio}
\bibliographystyle{amsplain}

\end{document}